\theoremstyle{plain}
\newtheorem{theorem}{Theorem}
\newtheorem{lemma}{Lemma}
\newtheorem{proposition}{Proposition}
\theoremstyle{definition}
\newtheorem{definition}{Definition}
\newtheorem*{theorem*}{Theorem}
\newtheorem*{corollary*}{Corollary}
\newtheorem*{proposition*}{Proposition}
\title{Uniqueness of 4-manifolds described as sequences of 3-d handlebodies}
\author{Gabriel Islambouli}
\date{}
\begin{document}

\maketitle

\section{Introduction}

Given a description of a manifold, two natural questions arise: existence and uniqueness. For example, closed, orientable 3-manifolds can be described as surgeries on a link in $S^3$. The existence problem for this description was addressed by Lickorish and Wallace \cite{WL} \cite{AW} who showed that any closed, orientable 3-manifold is given by surgery on a link in $S^3$. In \cite{Kir78},  Kirby proved a uniqueness theorem for these descriptions, showing that any two link-surgeries yielding the same 3-manifold are related by handle slides and blowups. Similarly, Moise \cite{EM} showed that every closed, orientable 3-manifold can be described as a Heegaard splitting, and Reidemeister and Singer \cite{JS} \cite{KR} showed that any two Heegaard splittings giving the same 3-manifold are related by a stabilization operation.

Moving up a dimension, there has been a variety of new descriptions of smooth, orientable, closed 4-manifolds. In \cite{Gay19}, Gay showed that any such 4-manifold can be described as a loop of Morse functions on a surface, and used this fact to give a novel proof of that the smooth, oriented, 4-dimensional cobordism group is isomorphic to $\mathbb{Z}$. In \cite{IK}, Klug and the author showed that these 4-manifolds can be represented as a loop in the pants complex, and this was used to provide a combinatorial calculation of the aforementioned cobordism group as well as to give invariants of loops in the pants complex. Kirby and Thompson \cite{KT} have also showed that any such 4-manifold can be described as a loop in the cut complex, and they used this to define an invariant of 4-manifolds which detects the 4-sphere among homotopy spheres. Finally, in \cite{IN20} Naylor and the author introduced multisections of 4-manifolds and used them to describe 4-manifold operations such as cork twisting and log transforms diagrammatically on a surface.

As these descriptions all stem from trisections \cite{GK}, there are inherent similarities between them. Most obviously, all of the information in these decompositions can be given on a surface. Nevertheless it was not clear how to pass between these descriptions. The first half of this paper is focused on constructing explicit correspondences between certain quotients of the aforementioned sets. These quotients still retain all of the information needed to construct a unique smooth 4-manifold, together with a multisection structure. 

The second half of the paper is dedicated to showing how any two multisections yielding diffeomorphic smooth 4-manifolds are related. To do this, we diagrammatically realize a move introduced in \cite{IN20} called a UPW move, as it is a combination of previously defined unsink, push, and wrinkle moves. The unsink portion of this move produces a Lefschetz singularity and the total move modifies a multisection diagram in the neighbourhood of this Lefschetz singularity as in Figure \ref{fig:wrinklingAnnulus}. Passing through the correspondences constructed in the first section, we are able to realize a UPW move as a modification of generic loops of Morse functions, a modification of loops in the pants complex, and as a modification of loops in the cut complex. This leads to the main theorem of the paper, stated in different notation in Theorem \ref{thm:secondBijections}.

\begin{theorem*}
There are bijections between the following sets:
\begin{itemize}

\item Smooth, orientable, closed 4-manifolds modulo diffeomorphism.
\item Loops in the cut complex of $\Sigma_g$ modulo type 0 moves, commuting type-1 moves, UPW moves, and an overall diffeomorphism of $\Sigma_g$.
\item Loops in the pants complex of $\Sigma_g$ modulo A-moves, commuting S-moves, UPW moves, and an overall diffeomorphism of $\Sigma_g$.
\item Generic loops of Morse functions $f_t: \Sigma_g \to \mathbb{R}$ modulo generic paths with one critical value crossing in a torus, commuting crossings in tori, UPW moves, and an overall diffeomorphism of $\Sigma_g$.
\item Multisections of 4-manifolds modulo UPW moves, expansions, contractions, and multisection diffeomorphism.

\end{itemize}
\end{theorem*}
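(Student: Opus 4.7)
My plan is to factor the five-way bijection through the multisection side: first, to transport the bijections between multisections, loops in the cut complex, loops in the pants complex, and generic loops of Morse functions that are constructed in the first half of the paper across the UPW equivalence relation on each set; and second, to prove that multisections up to UPW moves, expansions, contractions, and multisection diffeomorphism are in bijection with smooth, orientable, closed 4-manifolds up to diffeomorphism.

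For the first part, the set-theoretic correspondences from the first half of the paper already match type-0/commuting type-1 moves with A/commuting S moves, with single critical-value crossings in a torus and commuting crossings in tori, and with multisection diffeomorphism. It therefore suffices to show that the UPW move transports across each of the three dictionaries to the UPW move described in the theorem. Since the UPW move is by definition localized in a neighborhood of the Lefschetz singularity produced by the unsink, this is a local computation: take the model diagram in Figure \ref{fig:wrinklingAnnulus}, translate it through the first-half correspondences, and observe that in each combinatorial model the result is precisely the local modification defined there as a UPW move. The first-half correspondences are equivariant under mapping class group actions on $\Sigma_g$, so the induced bijections automatically descend to the quotient by an overall surface diffeomorphism.

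The crux of the theorem is the final bijection with 4-manifolds up to diffeomorphism. Existence follows from \cite{IN20}, so the real content is a uniqueness statement: any two multisections of diffeomorphic 4-manifolds are related by UPW moves, expansions, contractions, and multisection diffeomorphism. I would prove this by a Cerf-theoretic argument adapted to the multisection setting. After applying expansions to equalize the genus of the central surfaces, one interpolates the two multisections by a generic 1-parameter family of Morse-theoretic data on the 4-manifold and catalogues its singular events. Generic codimension-one events yield the trivial moves available in each combinatorial model; births and deaths correspond to expansions and contractions; and the main obstacle is the list of singular events that force the appearance of a Lefschetz critical point, each of which must be absorbed by a UPW move. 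Building the local model in a neighborhood of such a Lefschetz singularity, analogous to but more elaborate than the handle-slide analysis underpinning the trisection stabilization theorem of \cite{GK}, is the step I expect to be the main technical obstacle.
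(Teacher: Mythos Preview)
Your first part---transporting the first-half bijections across the UPW relation via the local diagrammatic model---is essentially what the paper does, so that portion is fine.

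The second part, however, diverges substantially from the paper and contains both a minor misconception and a missing key idea. First, the minor point: expansions do not change the genus of the multisection surface; they only add radial lines and hence increase the number of sectors. Genus change is effected solely by the UPW move itself, so ``applying expansions to equalize the genus'' is not a meaningful step.

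More importantly, the paper does \emph{not} run a direct Cerf-theoretic argument on multisections. What you are proposing---interpolating two multisections by a generic one-parameter family and cataloguing the codimension-one events---would amount to reproving the Gay--Kirby trisection uniqueness theorem from scratch in the multisection setting. You correctly flag this as the main technical obstacle, and indeed it would be a substantial undertaking with no guarantee that the catalogue of events can all be absorbed by the moves listed. The paper sidesteps this entirely by a two-step reduction. First, repeated UPW moves (together with contractions) turn any multisection into a trisection; this is Proposition~8.4 of \cite{IN20}, quoted here as Proposition~\ref{prop:decreaseSectors}. Second---and this is the genuinely new content---the paper shows diagrammatically (Proposition~\ref{prop:UPWisStab}, Figures~\ref{fig:sildesToStabilization} and~\ref{fig:remainingCurves}) that a UPW move applied to a cusp of an innermost tricusp realizes an unbalanced $i$-stabilization in the sense of \cite{MSZ}, so three such moves yield a balanced trisection stabilization. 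Once stabilization is available, the existing Gay--Kirby uniqueness theorem \cite{GK} finishes the argument.

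So the missing idea is not a new Cerf analysis but rather the observation that UPW moves are strong enough both to reduce to trisections and to simulate trisection stabilization, after which one can simply quote \cite{GK}.
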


To prove this theorem, we use the fact shown in \cite{IN20} that repeated application of this move reduces a multisection into a trisection. Once the multisection has been reduced to a trisection, we show how this move encompasses moves sufficient to relate any two trisections of the same 4-manifold. In particular the following proposition is proven diagrammatically in Figures \ref{fig:sildesToStabilization} and \ref{fig:remainingCurves} .

\begin{proposition*}
Trisection stabilization can be realized as a sequence of three UPW moves.
\end{proposition*}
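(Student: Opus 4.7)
The plan is to work entirely at the level of diagrams, as the author indicates. Fix a trisection diagram $(\Sigma_g; \alpha, \beta, \gamma)$ and recall that standard trisection stabilization replaces it with $(\Sigma_{g+3}; \alpha', \beta', \gamma')$, where the three new genus-one summands each contribute a pair of parallel curves to one of the cut systems and standard meridian/longitude pairs to the other two. My strategy is to perform three UPW moves symmetrically, one associated to each pair of adjacent sectors of the trisection, and to verify that together they produce exactly this local curve picture.

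First I would set up the local model. A UPW move, as described in \cite{IN20}, starts with an unsink that creates a Lefschetz singularity on a designated $3$-handlebody boundary, then pushes and wrinkles it; the net diagrammatic effect is captured in Figure \ref{fig:wrinklingAnnulus}, where an annular neighbourhood on $\Sigma_g$ is replaced by a genus-one summand carrying a prescribed triple of curves. I would isolate this local piece, label the three sectors of the ambient trisection as $1,2,3$, and perform the first UPW move so that the new Lefschetz singularity sits on the piece of $\Sigma_g$ cutting sector $1$ from sector $2$. This upgrades the trisection to a $4$-section with one new genus bump, and by direct inspection of Figure \ref{fig:wrinklingAnnulus} the new curves agree with one third of what a stabilization should produce.

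Next I would iterate: the second UPW move is performed in the annular region between sectors $2$ and $3$, the third between sectors $3$ and $1$. After each move the resulting object is a multisection of higher index, but the cyclic symmetry ensures that after the third move one returns to a multisection with three pieces, now on $\Sigma_{g+3}$. The main check is that the composite modification of the cut systems matches stabilization. This is essentially the content of Figures \ref{fig:sildesToStabilization} and \ref{fig:remainingCurves}: the first records the new curves introduced by the three UPW moves together with the handle slides that simplify them, and the second accounts for the remaining cut curves and verifies that each new genus-one summand carries the canonical stabilization triple.

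The main obstacle is bookkeeping rather than topology. The UPW move is defined so that it modifies a multisection in the neighbourhood of a Lefschetz singularity, but after the first move the ambient object is no longer a trisection, so one must be careful to identify the correct annular region in which the next UPW move is legal and to track how the previously introduced curves intersect the region of the next move. I expect the cleanest argument is to perform all three unsinks before any pushes and wrinkles, so that the three Lefschetz singularities are introduced simultaneously in disjoint regions of $\Sigma_g$; then the pushes and wrinkles can be carried out independently, and one only has to verify the final local curve picture in each of the three fresh genus-one summands, which is precisely what Figures \ref{fig:sildesToStabilization} and \ref{fig:remainingCurves} accomplish after a sequence of handle slides.
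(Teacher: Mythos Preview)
Your proposal contains a genuine misunderstanding of what a UPW move does to a trisection. You write that the first UPW move ``upgrades the trisection to a $4$-section,'' but this is not the case. By definition the three cusps of the wrinkled fold are distributed to sectors $X_{i+1}$, $X_{i+2}$, $X_{i+3}$; when $n=3$ these indices are taken mod $3$, so the result is again a trisection, now of genus $g+1$. The sector-reducing behaviour you have in mind (Proposition~\ref{prop:decreaseSectors}) applies only for $n>3$, and even there requires a contraction. Consequently your narrative of ``$3$-section $\to$ $4$-section $\to \cdots \to$ $3$-section by cyclic symmetry'' does not describe what actually happens, and there is nothing that forces the process to close up on $\Sigma_{g+3}$ with the stabilization triple.

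The suggestion to perform all three unsinks first ``in disjoint regions of $\Sigma_g$'' also does not survive contact with the geometry: the three cusps one must act on are those of the \emph{inner tricusp}, whose vanishing cycles pairwise intersect once in a single $S_{1,3}$, so the three Lefschetz singularities cannot be separated into disjoint annular neighbourhoods. The paper's argument is structured quite differently. One shows that a \emph{single} UPW move on the cusp of the inner tricusp lying in $X_i$ yields an unbalanced $i$-stabilization in the sense of \cite{MSZ}; this is exactly what Figures~\ref{fig:sildesToStabilization} and~\ref{fig:remainingCurves} establish (they record one move, not three). The second, and essential, point is that after destabilizing, the remaining three curves again form an inner-tricusp configuration of the same combinatorial type (Figure~\ref{fig:remainingCurves}), so the procedure can be iterated in the next sector. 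Three iterations give the three unbalanced stabilizations whose composite is the balanced trisection stabilization. Your outline misses both the identification of a single UPW move with an unbalanced stabilization and the persistence of the inner tricusp that makes iteration possible.
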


Along the way to the main uniqueness theorem, we prove correspondences between the unstable objects involved in these descriptions of 4-manifolds. In particular, in Section \ref{sec:firstBijections} we prove the following correspondences by giving explicit maps between each of the sets.

\begin{theorem*}
Let $\Sigma_g$ be a closed, orientable surface of genus $g$. There are bijections between the following sets:

\begin{itemize}

\item Loops in the cut complex of $\Sigma_g$ modulo type-0 moves.
\item Loops in the pants complex of $\Sigma_g$ modulo A-moves.
\item Generic loops of Morse functions $f_t: \Sigma_g \to \mathbb{R}$ modulo generic paths which  have at most one critical value exchange in a torus.
\item Thin multisections with multisection surface $\Sigma_g$ modulo multisection diffeomorphism.

\end{itemize}

\end{theorem*}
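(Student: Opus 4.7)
My plan is to identify all four sets with a single common combinatorial object: a cyclic sequence of 3-dimensional handlebodies $(H_0,H_1,\ldots,H_{n-1},H_n=H_0)$, each with boundary $\Sigma_g$, such that consecutive $H_i$ and $H_{i+1}$ fit together into the prescribed local model of a thin multisection sector. The strategy is to construct explicit maps from each of the four sets in the theorem onto this common object and to show that the kernel of each map is exactly the stated equivalence relation. Bijectivity between any two of the sets then follows by composition, and going around the cycle returns the identity on the underlying handlebody sequence.

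The maps on each side record the same geometric data. A cut system on $\Sigma_g$ canonically bounds a handlebody (by attaching disks), so a loop in the cut complex produces a sequence of handlebodies, and type-0 moves are exactly the moves preserving this sequence. A pants decomposition whose curves bound disks in a handlebody records that handlebody, and an A-move, being supported in a four-holed sphere where both the old and new curve bound disks, preserves the handlebody, while an S-move does not. A generic Morse function $f:\Sigma_g\to\mathbb R$ has $1$-handles attaching along a cut system, which bounds a handlebody; a critical-value exchange in a torus replaces the cut system by one bounding the same handlebody. For thin multisections the map is tautological. I would traverse the sets in the order cut $\to$ pants $\to$ Morse $\to$ multisection $\to$ cut and verify that the composition is the identity on handlebody sequences.

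Well-definedness then amounts to two checks in each case: first that the named moves preserve the handlebody sequence, and second that any two representatives inducing the same handlebody sequence are related by those moves. The first check is in each case local, supported in a torus or four-holed sphere or a neighbourhood of a single cut curve, and so is a direct verification. The second check requires classical handlebody results: any two cut systems for a fixed handlebody are related by disk slides, which realize as type-0 moves; any two pants decompositions of $\Sigma_g$ whose curves all bound disks in the same handlebody are related by a sequence of A-moves, a handlebody version of the Hatcher--Thurston theorem; and Cerf theory, restricted so that generic crossings occur inside a torus, gives the analogous statement for Morse functions.

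The hardest step is this second check on the pants and Morse sides. For pants, one must pin down the handlebody version of Hatcher--Thurston, ensuring that A-moves alone suffice among pants decompositions that are compatible with a fixed handlebody. For Morse functions, the hard part is localizing the Cerf moves to a torus subsurface, so that the relation ``generic paths with at most one critical value exchange in a torus'' is genuinely the full ambiguity in choosing a generic loop of Morse functions carrying a given handlebody sequence. A secondary obstacle is formulating the ``thin'' condition on multisections so that it matches loops on the fixed surface $\Sigma_g$ on each of the other three sides, without implicit stabilization; once these points are settled, the rest of the argument reduces to routine compatibility checks between the four languages.
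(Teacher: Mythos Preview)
Your overall architecture is the same as the paper's: everything is funneled through cyclic sequences of minimally dual handlebodies (what the paper calls loops in the \emph{dual handlebody graph} $D(\Sigma_g)$), and the four sets are shown to be in bijection with that common object. You also correctly flag that the pants side needs Luo's theorem (two pants decompositions bounding in the same handlebody are connected by A-moves), which is exactly what the paper invokes.

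There is, however, a genuine confusion in your treatment of the Morse side. You write that ``a critical-value exchange in a torus replaces the cut system by one bounding the same handlebody,'' and later speak of ``Cerf theory, restricted so that generic crossings occur inside a torus.'' This is backwards: an exchange supported in a copy of $S_{1,2}$ is precisely the move that \emph{changes} the handlebody (it replaces a cut curve by one meeting it once, a type-1 move), whereas exchanges in $S_{0,4}$ or in disjoint pairs of pants preserve the handlebody. The equivalence relation in the theorem (``torus crossing replacement'') does not say that torus crossings are invisible; it says you may replace one sub-path containing a single torus crossing by any other such sub-path with the same endpoints. Getting this reversed would collapse the whole handlebody sequence to a single handlebody.

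Even after repairing this, your plan for injectivity on the Morse side is underspecified. Saying that ``Cerf theory'' will provide a path with the required restriction is the crux of the matter, and the paper does real work here (Proposition~3): given $f_0,f_1$ with $H(f_0)=H(f_1)$, one first uses Luo and Hatcher--Lochak--Schneps to realize A-moves and match $P(f_0)$ to $P(f_1)$ without torus crossings; then one normalizes the functions to agree on a cut system; and finally one \emph{cuts} $\Sigma_g$ along that cut system to obtain a planar surface, where no $S_{1,2}$ subsurface exists, and appeals to a theorem of Matveev/Kudryavtseva on connectedness of Morse functions on surfaces with prescribed boundary behavior. This cutting trick is the step your outline is missing; generic Cerf theory on $\Sigma_g$ itself gives no control over how many torus crossings appear.
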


In particular, the bijection between loops in the cut complex and generic loops of Morse functions gives the following corollary to Proposition \ref{prop:MorseToComplexInjection}, which may be of independent interest.

\begin{corollary*}
Let $f_0: \Sigma_g \to \mathbb{R}$ and $f_1: \Sigma_g \to \mathbb{R}$ be two Morse functions which have a collection of level sets $C_0$ and $C_1$, respectively, which form slide equivalent cut systems of $\Sigma_g$. Then $f_0$ and $f_1$ are connected by a generic path of Morse functions, $f_t$, such that $f_t$ contains no critical value crossings in a torus.
\end{corollary*}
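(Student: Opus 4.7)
The plan is to leverage the explicit correspondence between generic paths of Morse functions on $\Sigma_g$ and edge paths in the cut complex that is implicit in Proposition \ref{prop:MorseToComplexInjection}. Under this correspondence, a critical value crossing whose intermediate surface is a pair of pants realizes a type-$0$ edge (a handle slide) in the cut complex, while a crossing whose intermediate region is a once-punctured torus realizes a type-$1$ edge. Slide equivalence of $C_0$ and $C_1$ precisely means they are connected by a sequence of type-$0$ edges, so the goal is to lift this sequence to a path of Morse functions whose critical value crossings are all of pair-of-pants type.

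First I would realize each individual handle slide in the slide-equivalence as a local one-parameter modification of a Morse function. Given a Morse function whose level sets contain the current cut system, a handle slide of one curve over a neighboring one can be implemented by interchanging the heights of the two index-$1$ critical points associated to these curves; because these critical points sit inside a pair of pants in $\Sigma_g$, the resulting crossing is of pair-of-pants type. Composing these local modifications along the slide-equivalence from $C_0$ to $C_1$ produces a generic path from $f_0$ to some Morse function $\widetilde{f}_1$ whose distinguished level sets form $C_1$ and which contains no critical value crossings in a torus.

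It then remains to join $\widetilde{f}_1$ to the given $f_1$ by a generic path that also avoids torus crossings. Since $\widetilde{f}_1$ and $f_1$ induce the same cut system $C_1$, this is essentially the content of Proposition \ref{prop:MorseToComplexInjection}: two Morse functions mapping to the same vertex of the (quotient) cut complex must be connected by a generic path whose critical value crossings all lie in pair-of-pants regions. Concatenating with the first path gives the desired generic path from $f_0$ to $f_1$.

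The main obstacle will be controlling interactions between the local handle-slide modifications and unrelated critical values elsewhere on $\Sigma_g$: as the Morse function is deformed to realize one handle slide, other critical values may be forced into unintended crossings, and one must argue by a general-position perturbation that any such extraneous crossings can be confined to pair-of-pants regions and never occur in a torus. I expect this technical point to be the content one ultimately extracts from Proposition \ref{prop:MorseToComplexInjection}, and so the real work consists in applying that proposition correctly at each stage rather than reproving a new Cerf-theoretic statement.
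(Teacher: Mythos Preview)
Your proposal has a circularity problem. You invoke Proposition~\ref{prop:MorseToComplexInjection} to connect $\widetilde{f}_1$ and $f_1$, but in the paper the proof of that proposition rests on Proposition~\ref{prop:MorseFunctionPath}, whose statement is precisely the corollary you are trying to establish (slide-equivalent cut systems define the same handlebody, so the hypotheses coincide). Thus the assertion ``two Morse functions mapping to the same vertex are connected without torus crossings'' is not something you can extract from Proposition~\ref{prop:MorseToComplexInjection}; it is the \emph{input} to that proposition, and is exactly what requires an independent argument here.

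There is also a smaller imprecision in your first step. A type-$0$ edge in the cut complex does not correspond to a single critical value crossing of pair-of-pants type: crossings in an $S_{0,3}$ leave the pants decomposition unchanged, while crossings in an $S_{0,4}$ realize A-moves in the pants complex, and neither is literally a handle slide of cut curves. The paper works instead through the pants complex, using Luo's result that pants decompositions defining the same handlebody are connected by A-moves, and realizes each A-move by an $S_{0,4}$ crossing.

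The paper's actual mechanism for ruling out torus crossings is topological rather than combinatorial, and it is the idea your outline is missing. After arranging that the two functions share a cut system $C$ and agree on a neighbourhood of $C$ (via A-moves, reordering of level heights, and a local normal-direction correction), one removes a neighbourhood of $C$ to obtain relative Morse functions on a $2g$-punctured \emph{sphere}. A planar surface contains no subsurface homeomorphic to $S_{1,2}$, so any generic path connecting the two restricted functions---whose existence is guaranteed by Matveev's theorem (Theorem~\ref{thm:boundaryMorseConnected})---is automatically free of torus crossings. This reduction to a planar surface is what replaces the appeal to Proposition~\ref{prop:MorseToComplexInjection} in your sketch.
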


\section{Preliminary definitions and constructions}
\subsection{Constructing 4-manifolds and multisections from sequences of handlebodies}
\label{sec:constructingMultisections}
\label{sec:4mfldFromHB}
Throughout this paper, we let $\Sigma_g$ be a fixed closed orientable surface of genus $g$. We denote by $S_{g,b}$ some surface homeomorphic to a surface of genus $g$ with $b$ boundary components. Many of the constructions in this paper take in an object associated to $\Sigma_g$ and extract a sequence of handlebodies bounded by $\Sigma_g$. Adjacent handlebodies in these sequences are closely related, in a way made precise by the following definition.

\begin{definition}
Handlebodies $H_1$ and $H_2$ with boundary $\Sigma_g$ are said to be \textbf{dual handlebodies} if the Heegaard splitting $H_1 \cup_{\Sigma_g} H_2$ is a Heegaard splitting of $\#^{k} S^1 \times S^2$, for some integer $k$. $H_1$ and $H_2$ are said to be \textbf{minimally dual} if they form the Heegaard splitting of $\#^{g-1} S^1 \times S^2$. A \textbf{cyclic sequence of dual handlebodies} is a cyclically ordered list $L = (H_1,...H_n)$ such that $H_i$ and $H_{i+1}$ are dual handlebodies for $i \in \mathbb{Z}$ (with indices taken mod $n$).
\end{definition}

By theorems of Waldhausen \cite{FW} and Haken \cite{WH}, cut systems for a pair of minimally dual handlebodies can be made to look exactly like those in Figure \ref{fig:DualHandlebodies}, after some handle slides and a homeomorphism of $\Sigma_g$. Following Gay and Kirby in \cite{GK}, a sequence of dual handlebodies gives rise to a unique smooth 4-manifold as described in the definition below, and illustrated in Figure \ref{fig:FillInTrisection}.

\begin{figure}
    \centering
    \includegraphics[scale=.2]{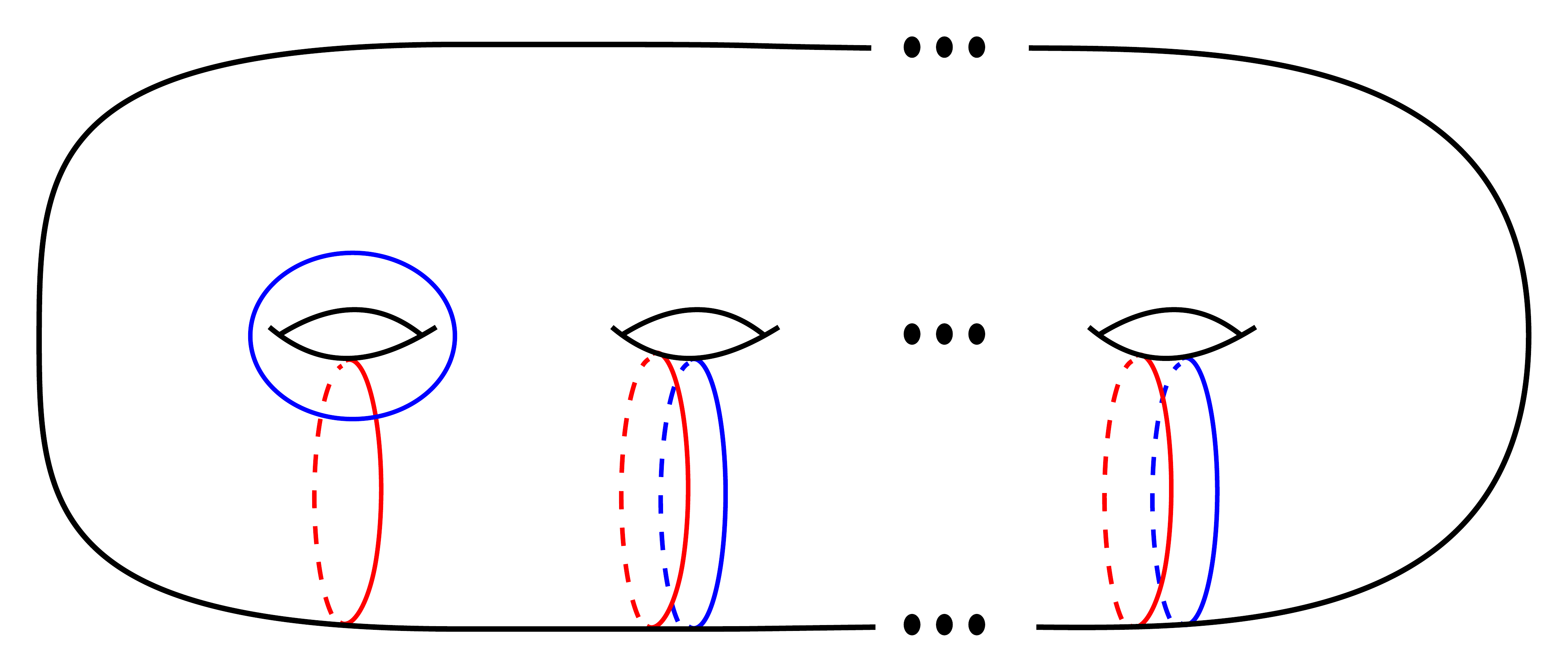}
    \caption{Cut systems for minimally dual handlebodies.}
    \label{fig:DualHandlebodies}
\end{figure}

\begin{definition}
\label{def:MfldFromHBs}
Let $L = (H_1, \cdots, H_n)$ be a cyclically ordered list of handlebodies with boundary $\Sigma$. Then the \textbf{smooth 4-manifold associated to L}, denoted $\mathbf{X^4(L)}$, is the 4-manifold obtained by the following process: 
\begin{itemize}
    \item Start with $\Sigma\times D^2$ and view the boundary $\Sigma \times S^1$ as $\Sigma \times [0,1]$ with $\Sigma \times \{0\}$ identified with $\Sigma \times \{1\}$
    
    \item Attach $H_i \times [i/n, \frac{i+1/2}{n}]$ to  $\Sigma\times D^2$ by thickening the map specifying the attachment of $H_i$ to $\Sigma$.
    
    \item Since $H_i$ and $H_{i+1}$ are dual, the resulting 4-manifold has $n$ boundary components, each diffeomorphic to $\#^{k_i}S^1\times S^2$. Cap each of these boundary components off with $\sharp^k S^1 \times B^3$  uniquely \cite{LP} to produce the smooth closed 4-manifold $X^4(L)$.
\end{itemize}
\end{definition}

\begin{figure}
    \centering
    \includegraphics[scale=.3]{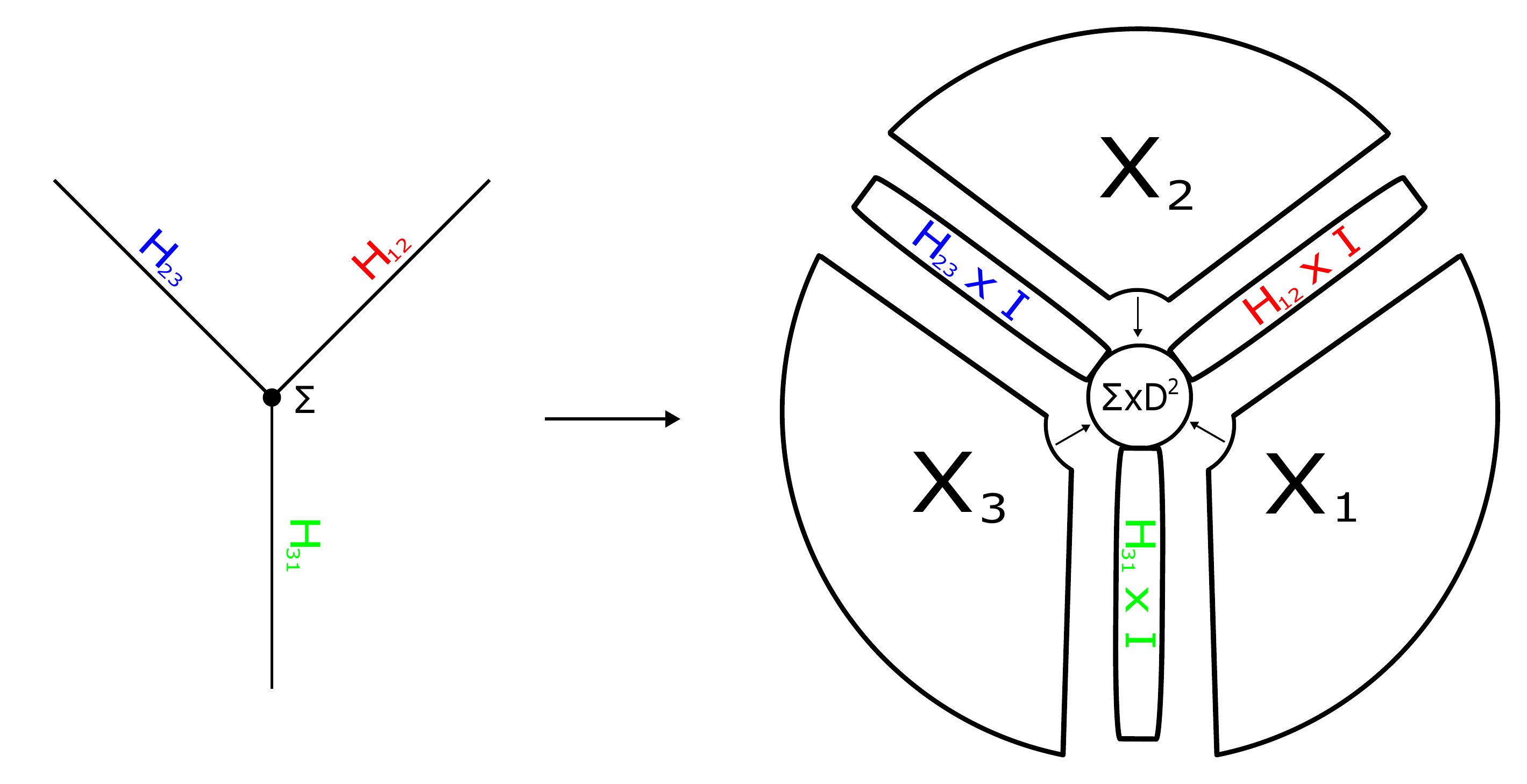}
    \caption{The 4-manifold associated to a sequence of dual handlebodies can be obtained by thickening the surface and handlebodies to 4-dimensions, then filling in the resulting boundary components uniquely with copies of $\sharp^k S^1 \times B^3$. This manifold inherits a multisection structure given by the $X_i$.}
    \label{fig:FillInTrisection}
\end{figure}

Given a loop of dual handlebodies $L$, we will see that the 4-manifold $X^4(L)$ comes naturally equipped with the structure of a multisection. These generalizations of trisections were defined and studied in \cite{IN20}, and we reproduce the definition here.

\begin{definition}
\label{def:multisection}
Let $X$ be a smooth, orientable, closed, connected 4-manifold. An $n$\emph{-section}, or \emph{multisection} of $X$ is a decomposition $X = X_1 \cup X_2 \cup \cdots \cup X_n$ such that:
\begin{enumerate}
    \item $X_i \cong \natural^{k_i} S^1 \times B^3$;
    \item $X_1 \cap X_2 \cap \dots \cap X_n = \Sigma_g$, a closed orientable surface of genus $g$ called the \textbf{multisection surface};
    \item $X_i\cap X_j=H_{i,j}$ is a 3-dimensional 1-handlebody if $|i-j|=1$, and $X_i \cap X_j = \Sigma_g$ if $|i-j|>1$;
    \item $\partial X_i \cong \#^{k_i} S^1 \times S^2$ has a Heegaard splitting given by $H_{(i-1),i} \cup_{\Sigma} H_{i,(i+1)}$.
\end{enumerate}

The $X_i$ are called \textbf{sectors} of the multisection. If each sector is diffeomorphic to $\natural^{g-1} S^1 \times B^3$, then the multisection is called \textbf{thin}.
\end{definition}

There are various equivalence relations one could consider on a multisection, but in this paper, we will primarily be using the following. 

\begin{definition}
Let $X$ and $Y$ be 4-manifolds and let $X= X_1 \cup ... \cup X_n$ and $Y = Y_1 \cup ... \cup Y_n$ be multisections. A \textbf{diffeomorphism of multisections} is a diffeomorphism $\phi: X \to Y$ such that $\phi (X_i) = Y_i$. We denote by $\mathcal{T}(\Sigma)_g$ the set of diffeomorphism classes of thin multisections whose multisection surface is $\Sigma_g$.
\end{definition}

Given a loop $L = (H_1, ... ,H_n)$ of dual handlebodies, the multisection structure on $X^4(L)$ is given by $X^4(L) = X_1 \cup X_2 \cup \cdots \cup X_n$ where $X_i$ is the union of $H_i$, $H_{i+1}$ and the copy of $\natural^{k_i} S^1 \times B^3$ filling them in. If $L$ is a sequence of minimally dual handlebodies, then the induced multisection is thin. We call this multisection structure $\textbf{T(L)}$, and we will show that it is well defined up to diffeomorphisms of multisection in Section \ref{sec:DHtoMS}.

We also note that the 4-manifold, together with the multisection structure, can be reconstructed from an ordered sequence of cut systems for the handlebodies $H_i$ drawn on $\Sigma_g$. This is called a \textbf{multisection diagram} and we refer the reader to the top and bottom of Figure \ref{fig:S2xS2QuadtoTri} for some basic examples and to $\cite{IN20}$ for more involved examples.

\subsection{Morse functions on a surface}

In this paper, we require that Morse functions be stable so that, in particular, each critical value has at most one critical point in its preimage. In this section, following \cite{Gay19}, we will show how to extract a sequence of dual handlebodies from a generic path of Morse functions on $\Sigma_g$. We begin by associating a handlebody to a Morse function.

\begin{definition}
Let $f: \Sigma \to \mathbb{R}$ be a Morse function. We define the \textbf{handlebody associated to $f$}, denoted $\mathbf{H(f)}$ to be the unique handlebody where for each regular value, $r$, every component of $f^{-1}(r)$ bounds a disk.
\end{definition}

In addition to the data of the handlebody $H(f)$, a Morse function also gives rise to a family of cut systems for the handlebody $H(f)$. In fact, a Morse function most naturally gives rise to a \textbf{pants decomposition}, which is a collection of $3g-3$ disjoint, non-isotopic, essential, simple closed curves.

\begin{definition}
\label{def:PfandCf}
Let $f: \Sigma \to \mathbb{R}$ be a Morse function. We define the \textbf{pants decomposition associated to $f$}, denoted $\mathbf{P(f)}$ to be the unique pants decomposition where every curve is isotopic to a component of $f^{-1}(r)$ for some regular value $r$. We define a \textbf{cut system associated to $f$}, denoted $\mathbf{C(f)}$ to be a cut system where every curve is isotopic to a component of $f^{-1}(r)$ for some regular value $r$.
\end{definition}

Any two Morse functions $f_0$ and $f_1$ on $\Sigma_g$ are connected by a 1-parameter family of functions, $f_t$ which is Morse except for at a finite number of  \textit{critical times} where the function is ``near Morse" \cite{JC}.  In these paths, at any critical time, $c$, $f_c$ has either two critical points mapping to the same critical value, which we call a \textbf{critical value exchange}, or has a single birth or death singularity. We call a path containing only such critical times a \textbf{generic path of Morse functions}.

Let $f_t$ be a generic path of Morse functions and let $c$ be a critical time. Since critical times are isolated, we can take $\epsilon$ be sufficiently small so that  there are no other critical times in $[c-\epsilon, c+\epsilon]$. We seek to understand how the handlebodies associated to the functions change as we pass a critical time. Note that since a handlebody is completely determined by a cut system, $H(f_t)$ is determined by $C(f_t)$. Since birth and death singularities only introduce or remove  null-homotopic level sets, if $c$ is a birth or death time, $H(f_{c- \epsilon}) = H(f_{c+ \epsilon}).$ 
 If $c$ is a critical value exchange, then, by Euler characteristic considerations, $f^{-1}([c-\epsilon, c+\epsilon])$ contains some number of annuli, which can be ignored, together with either two disjoint copies of $S_{0,3}$, a copy of $S_{0,4}$, or a copy of $S_{1,2}$.

In the case that the two critical points lie two disjoint copies of $S_{0,3}$, $P(f_{c-\epsilon}) = P(f_{c+\epsilon})$ so that the handlebodies do not change. Since we can select a cut system disjoint from a $S_{0,4}$ subsurface, if the saddles at $c$ occur in $S_{0,4}$ then again $H(f_{c- \epsilon}) = H(f_{c+ \epsilon}).$ On the other hand, if the saddles at $c$ occur in $S_{1,2}$, then up to homeomorphism, the level sets of $f_{c- \epsilon}$ and $f_{c+ \epsilon}$ are the curves at the left and the right of Figure \ref{fig:6ASPath}. Since in this case we can choose $C(f_{c- \epsilon})$ and $C(f_{c + \epsilon})$ to be identical, aside from two curves which intersect once, $H(f_{c- \epsilon})$ and $H(f_{c+ \epsilon})$ are minimally dual handlebodies. We summarize this discussion in the following lemma, first observed in \cite{HT} and explicitly proven in Lemma 10 of \cite{Gay19}.

\begin{lemma}(\cite{HT}, \cite{Gay19})
Let $f_t$ for $t \in [0,1]$ be a generic path of Morse functions and let $c_1$ be a critical time. Take $\epsilon$ sufficiently small to have no other critical times in $[c_1-\epsilon, c_1+\epsilon]$. Then $H(f_{c - \epsilon})$ and $H(f_{c+\epsilon})$ are dual handlebodies. If $f_0 = f_1$ and $c_1, \cdots , c_k$ are the critical times for $f_t$, then $(H(f_{c_1 - \epsilon}), H(f_{c_2 - \epsilon}), \cdots, H(f_{c_k - \epsilon}))$ is a cyclic sequence of dual handlebodies.  
\end{lemma}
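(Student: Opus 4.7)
The plan is to reduce the lemma to a local analysis at a single critical time $c_1$, since by Definition \ref{def:PfandCf} the handlebody $H(f_t)$ is determined by any cut system $C(f_t)$, and cut systems can only change when a level set changes topologically (i.e., at critical times). Once a single critical time is handled, the cyclic statement follows by concatenating across $c_1, \ldots, c_k$ and using $f_0 = f_1$ to close up.

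For a fixed critical time $c_1$, I would let $\Sigma_c \subset \Sigma_g$ be a small neighbourhood of the union of $f_{c_1}^{-1}(c_1)$ with the critical points of $f_{c_1}$ that collide at $c_1$, so that $\Sigma_g \setminus \Sigma_c$ consists of regular level sets unchanged (up to isotopy) throughout $[c_1 - \epsilon, c_1 + \epsilon]$. If $c_1$ is a birth or death, $\Sigma_c$ is a disk whose only contribution to a cut system would be null-homotopic, so a common cut system may be chosen outside $\Sigma_c$ and $H(f_{c_1 - \epsilon}) = H(f_{c_1 + \epsilon})$. If $c_1$ is a critical value exchange, then $\Sigma_c$ (ignoring annular components) is dictated by the genus formula for pair-of-pants gluings and is one of: two disjoint copies of $S_{0,3}$; a single $S_{0,4}$; or a single $S_{1,2}$. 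In the first two subcases, every essential component of $\partial \Sigma_c$ is unchanged by the exchange, so one can extend the boundary circles of $\Sigma_c$ to a common cut system on $\Sigma_g \setminus \Sigma_c$ and obtain $H(f_{c_1 - \epsilon}) = H(f_{c_1 + \epsilon})$.

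The only interesting subcase is $\Sigma_c \cong S_{1,2}$. Here I would argue, following the local picture in Figure \ref{fig:6ASPath}, that after an ambient homeomorphism the level sets before and after $c_1$ restrict to $S_{1,2}$ as two simple closed curves intersecting transversely in a single point. Completing each of these to a cut system $C(f_{c_1 \pm \epsilon})$ using the \emph{same} curves outside $\Sigma_c$ yields cut systems that agree on $g - 1$ curves and differ on one curve, with the differing curves meeting once. This is exactly the standard cut-system picture for a genus-$g$ Heegaard splitting of $\#^{g-1} S^1 \times S^2$, so $H(f_{c_1 - \epsilon})$ and $H(f_{c_1 + \epsilon})$ are minimally dual, and in particular dual.

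For the cyclic claim, between consecutive critical times $c_i$ and $c_{i+1}$ the function $f_t$ is a family of Morse functions with no critical time crossings, so $H(f_t)$ is constant on $(c_i, c_{i+1})$; in particular $H(f_{c_i + \epsilon}) = H(f_{c_{i+1} - \epsilon})$. Applying the single-critical-time result at each $c_i$ then shows consecutive entries of $(H(f_{c_1 - \epsilon}), \ldots, H(f_{c_k - \epsilon}))$ are dual, and the loop condition $f_0 = f_1$ closes the cycle between $c_k$ and $c_1$. The main obstacle in writing this up carefully is the $S_{1,2}$ analysis: one must verify that the two level-set curves produced by the saddle exchange really do intersect once (not merely differ) and that extending by a common cut system on the complement does produce a valid cut system for \emph{both} handlebodies simultaneously, which is the content of the Waldhausen/Haken standardization invoked just after Definition \ref{def:MfldFromHBs}.
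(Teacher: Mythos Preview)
Your proposal is correct and mirrors the paper's argument almost exactly: the paper's proof is the discussion immediately preceding the lemma, which runs through the same case analysis (birth/death; two disjoint $S_{0,3}$'s; $S_{0,4}$; $S_{1,2}$) with identical conclusions, and then states the lemma as a summary of that discussion. The only cosmetic difference is that the paper disposes of the $S_{0,4}$ case by noting one can choose a cut system entirely disjoint from the four-holed sphere, rather than by extending boundary circles as you suggest.
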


We next define an equivalence relation among paths of Morse functions which still captures the information of the handlebodies determined by the functions. Since only critical value exchanges in $S_{1,2}$ affected the handlebody, we are free to modify paths to include other types of singularities without changing the handlebodies defined. In addition, the particular path of Morse functions realizing a single crossing exchange in $S_{1,2}$ is immaterial in the handlebodies defined on each side, and we will also account for this.

\begin{definition}
\label{def:MorseRelations}

Let $f_t$ be a generic path of Morse functions, and let $c$ be a critical time for $f_t$ consisting of \textit{at most} one critical value exchange in a copy of $S_{1,2}.$ Let $[d,e]$ be a closed interval containing $c$ such that $f_t$ contains no other critical value exchanges in $S_{1,2}.$ We say that a generic loop of Morse functions, $f_t'$, is obtained by a  \textbf{torus crossing replacement} of $f_t$ if $f_t = f_t'$ outside of $[d, e]$ and $f_t'$ has  \textit{at most} one critical value exchange in a copy of $S_{1,2}$ in $[d, e]$.

We define the set $\mathbf{\mathfrak{M}_h(\Sigma_g)}$ to be the set of generic loops of Morse functions on $\Sigma_g$ up to torus crossing replacements and homotopies through generic loops of Morse functions. If $f$ is a Morse function, it may viewed as the constant loop in $\mathfrak{M}_h(\Sigma_g)$ and we denote its image in this set as $[f]_h$.
\end{definition}

\subsection{Pants and cut graphs}
Having defined our first object of interest, we next turn our attention to graphs associated to curves on a surface. In this section, we will show that loops in appropriate quotients of these complexes are in bijection with $\mathfrak{M}_h(\Sigma).$ We begin with the pants decompositions. The collection of all of the pants decompositions of a surface can be arranged into a graph described below, and whose edges can be seen in Figure \ref{fig:PantsComplexMoves}.
\begin{definition}
Given a closed, orientable genus $g \geq 2$ surface, $\Sigma_g$, the \textbf{pants graph} of $\Sigma_g$, denoted $P(\Sigma_g)$, is built from vertices and two types of edges as follows:
\begin{itemize}
    \item Vertices correspond to isotopy classes of pants decompositions of $\Sigma_g$.
    
    \item Two vertices have an \textbf {A-edge} between them if the corresponding pants decompositions share all but one curve,  and the curves which differ lie on a 4-punctured sphere and intersect twice. 
    
    \item Two vertices have a \textbf {S-edge} between them if the corresponding pants decompositions share all but one curve,  and the curves which differ lie on a punctured torus and intersect once. 

\end{itemize}

\end{definition}

\begin{figure}
    \centering
    \includegraphics[scale=.2]{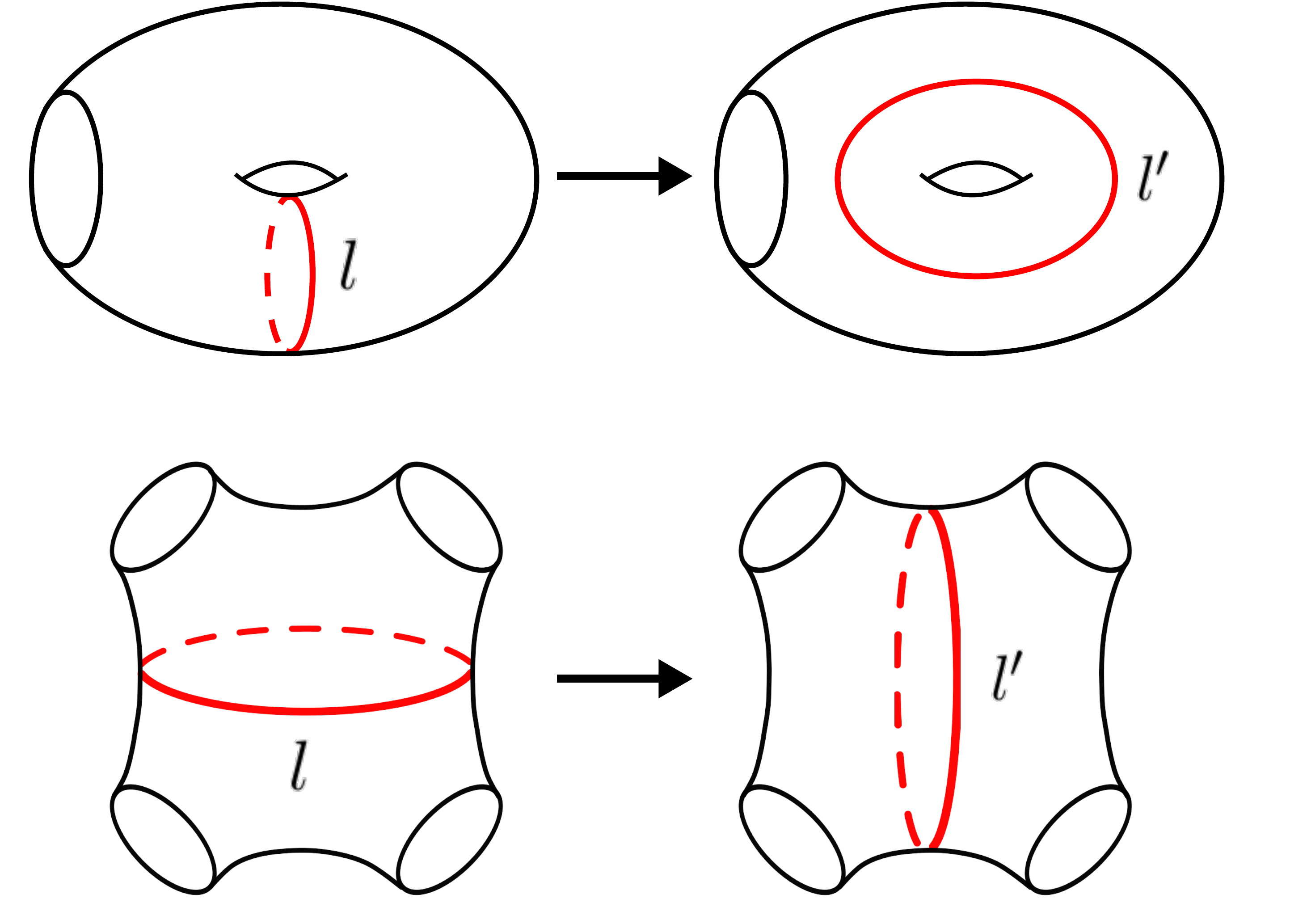}
    \caption{Top: An S-move in the pants complex. Bottom: and A-move in the pants complex}
    \label{fig:PantsComplexMoves}
\end{figure}

The other complex of interest in this paper will organize the cut systems of a surface into a complex. Following work of Wajnryb \cite{Waj98} and Johnson \cite{JJ}, Kirby and Thompson \cite{KT} arranged the cut systems of a given surface into a graph, whose edges can be seen in Figure \ref{fig:CutComplexEdges}, and which is defined below.

\begin{definition}
Given a closed, orientable genus $g$ surface, $\Sigma$, the \textbf{cut graph} of $\Sigma$, denoted $C(\Sigma)$, is built from vertices and two types of edges as follows:
\begin{itemize}
    \item Vertices correspond to isotopy classes of cut systems of $\Sigma$.
    
    \item Two vertices have a \textbf {Type-0} edge between them if the corresponding cut systems share all but one curve, and the curves which differ are disjoint. 
    
    \item Two vertices have a \textbf {Type-1} edge between them if the corresponding pants decompositions share all but one curve,  and the curves which differ intersect once. 
    \end{itemize}
\end{definition}

\begin{figure}
    \centering
    \includegraphics[scale=.4]{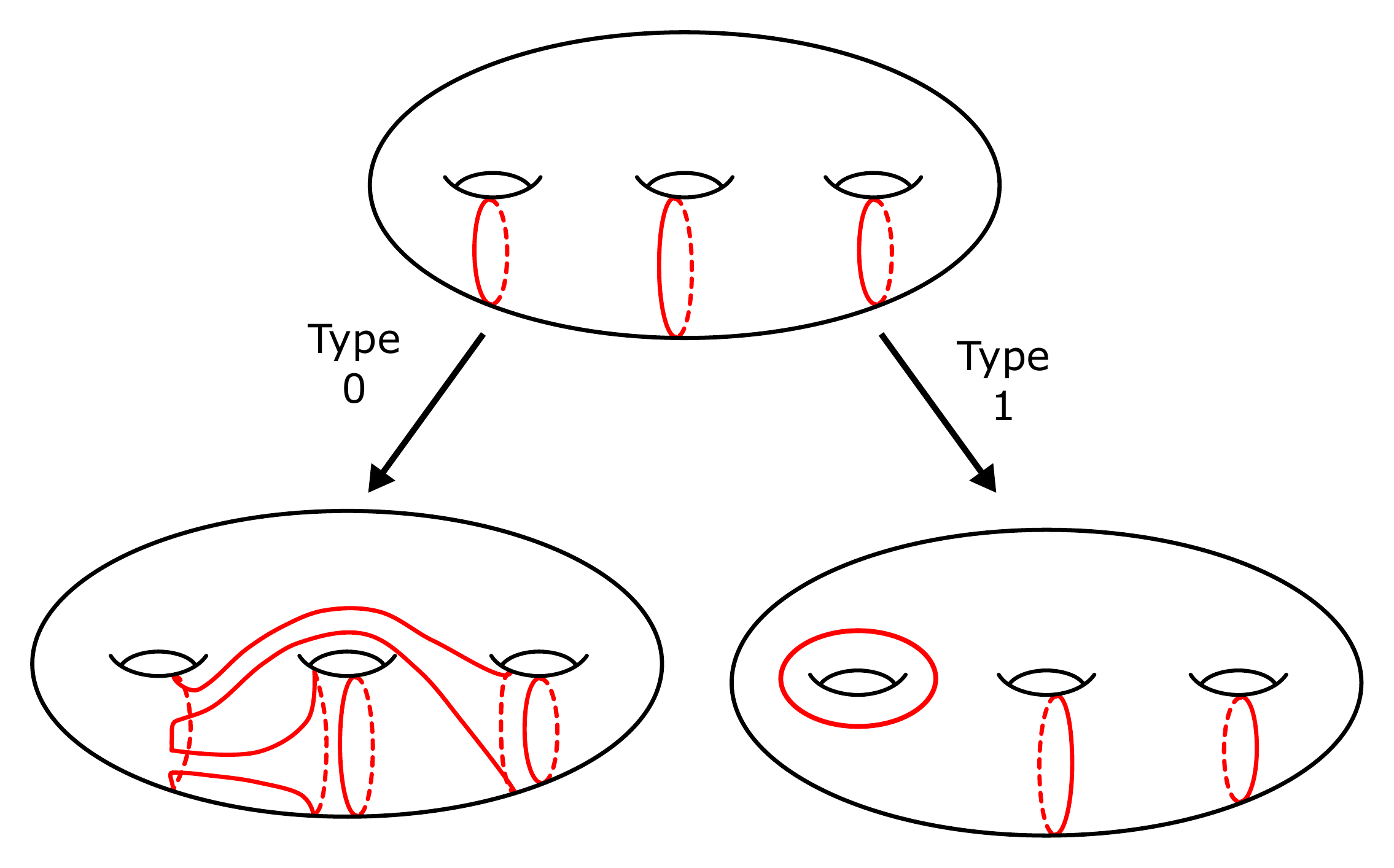}
    \caption{Edges in the Cut complex}
    \label{fig:CutComplexEdges}
\end{figure}

In the definition of the pants complex we excluded genus 1 surfaces. Here, following \cite{HatLocSch}, we define $P(\Sigma_1)$ to be $C(\Sigma_1)$. This complex only contains type 1 edges and is isomorphic to the Farey graph.

Slightly abusing notation, we will often refer to vertices in these complexes and the set of curves they correspond to on a surface using the same symbol. It is well known that a cut system determines a handlebody uniquely determined by the fact that all of the curves in the cut system bound a disk. Similarly, there is a unique handlebody corresponding to a pants decomposition, where all of the curves in the pants decomposition bound disks. Given a vertex $c \in C(\Sigma)$ or $p \in P(\Sigma)$ we denote this unique handlebody by $\textbf{H(c)}$ or $\textbf{H(p)}$, respectively. 

We define an equivalence relation on vertices in $C(\Sigma)$, $\sim_0$, to be $c \sim_0 c'$ if $c$ and $c'$ are connected by a sequence of only type-0 edges. Similarly, if $p,p' \in P(\Sigma),$ then we say $p \sim_A p'$ if $p$ and $p'$ are connected by a sequence of only A-edges. After taking the initial quotients by these relations, two vertices may have multiple edges between them. We will further quotient any two edges relating the same vertex to obtain the complexes $\mathbf{C(\Sigma) / \sim_0}$ and $\mathbf{P(\Sigma) / \sim_A}$. We refer to the images of vertices $c \in C(\Sigma)$ and $p \in P(\Sigma)$ in the quotient complexes $C(\Sigma) / \sim_0$ and $P(\Sigma) / \sim_A$  by $[c]_0$ and $[p]_A$ respectively. 

\begin{lemma}
The quotient graphs $C(\Sigma_g) / \sim_0$ and $P(\Sigma_g) / \sim_A$ are isomorphic.
\end{lemma}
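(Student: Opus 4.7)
The plan is to show that both quotient graphs are naturally isomorphic to the ``handlebody graph'' whose vertices are handlebodies $H$ with $\partial H = \Sigma_g$ and whose edges join minimally dual pairs. The vertex maps are the ones already introduced in the excerpt: $[c]_0 \mapsto H(c)$ and $[p]_A \mapsto H(p)$.

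First I would prove that $c \mapsto H(c)$ descends to a bijection between $V(C(\Sigma_g)/\sim_0)$ and the set $\mathcal{H}(\Sigma_g)$ of handlebodies with boundary $\Sigma_g$. Surjectivity is immediate. For well-definedness and injectivity I need the equivalence $c \sim_0 c'$ iff $H(c) = H(c')$. The forward direction is routine: if $\gamma \in c$ is replaced by a disjoint $\gamma'$ with $c' = (c \setminus \gamma) \cup \gamma'$ a cut system, then cutting $H(c)$ along the disks bounded by $c \setminus \gamma$ yields a solid torus on which $\gamma$ is a meridian, and $\gamma'$ is an essential simple closed curve disjoint from $\gamma$ on the boundary torus, hence isotopic to $\gamma$ there and therefore a meridian of $H(c)$ as well. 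The reverse direction is the classical connectedness of the cut systems of a fixed handlebody under type-0 moves, equivalent to connectedness of the disk complex of a handlebody and used implicitly in \cite{Waj98} and \cite{KT}. The analogous statement for pants decompositions---$p \sim_A p'$ iff $H(p) = H(p')$---is proved the same way, with A-moves replacing type-0 moves (A-moves realize handle slides of meridian disks in a four-holed sphere) and the corresponding connectedness of the graph of pants decompositions of a handlebody.

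Composing gives a vertex bijection $\Phi: V(C/\sim_0) \to V(P/\sim_A)$. To upgrade $\Phi$ to a graph isomorphism I need the remaining edges---type-1 edges in $C/\sim_0$ and S-edges in $P/\sim_A$---to correspond, and I claim both encode the \emph{minimal duality} relation on $\mathcal{H}(\Sigma_g)$. If $c, c'$ are type-1 adjacent, the differing curves lie in a one-holed torus subsurface and intersect once; the direct Heegaard splitting calculation noted after Figure \ref{fig:DualHandlebodies} shows $H(c) \cup_{\Sigma_g} H(c') \cong \#^{g-1}S^1 \times S^2$, so $H(c)$ and $H(c')$ are minimally dual. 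Conversely, by Waldhausen--Haken \cite{FW,WH} any two minimally dual handlebodies admit cut systems in the standard position of Figure \ref{fig:DualHandlebodies}, which differ by exactly one type-1 move. Extending those standard-position cut systems to pants decompositions with all new curves supported outside the distinguished one-holed torus gives the parallel statement: S-edges also correspond exactly to minimally dual pairs. Hence adjacency in both quotient graphs translates to the same relation on $\mathcal{H}(\Sigma_g)$, making $\Phi$ a graph isomorphism.

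The main obstacle is the injectivity of the vertex maps---i.e., the two connectedness results (cut systems of a fixed handlebody via type-0 moves, and pants decompositions of a fixed handlebody via A-moves). These are non-trivial but are classical in the curve-complex-of-a-handlebody literature, so I plan to cite them rather than reprove them. Once those facts are in place, the edge correspondence is an explicit diagrammatic check inside a single one-holed torus, and the genus one case is vacuous since by definition $P(\Sigma_1) = C(\Sigma_1)$ has no type-0 and no A-edges.
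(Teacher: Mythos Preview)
Your proposal is correct and follows essentially the same route as the paper: both identify the vertex sets of the two quotient graphs with the set of handlebodies bounded by $\Sigma_g$ (citing the connectedness of cut systems under type-0 moves and of pants decompositions under A-moves---the paper invokes Luo \cite{FeLu} for the latter), and then check that the remaining type-1 and S-edges each encode exactly the minimally-dual relation. The only cosmetic difference is that the paper constructs the map $C(\Sigma_g)/\sim_0 \to P(\Sigma_g)/\sim_A$ directly by extending a cut system to a pants decomposition, while you factor both sides through an intermediate handlebody graph; the content is the same.
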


\begin{proof}
We first show that vertices in the quotient complexes correspond to handlebodies. Since any two cut systems determining a handlebody are related by handle slides and since handle slides are a subset of type-0 moves, any two vertices representing the same handlebody are connected by a path of type-0 moves. Similarly, Luo \cite{FeLu} showed that any two vertices in the pants complex are connected by a path of $A$-moves. Moreover if $c \sim_0 c'$ or $p \sim_A p'$ then the handlebodies corresponding to the two vertices are the same. Together these facts imply that the vertices of both complexes correspond to handlebodies with boundary $\Sigma_g$. 

To fix an explicit map, $\phi: C(\Sigma_g) / \sim_0 \to P(\Sigma_g) / \sim_A$ between the vertices of the complexes, we can start with an arbitrary vertex $[c]_0 \in C(\Sigma_g) / \sim_0$, take a representative $c \in C(\Sigma_g)$ of $[c]_0$ and extend $c$ to a pants decomposition, $p$, by adding $2g-3$ curves in an arbitrary fashion. Once a cut system is specified, insisting that more curves in the complement of that cut system bound a disk in a handlebody is superfluous information, so that $H(c) = H(p)$ regardless of the choices we made in this extension. This means that setting $\phi([c]_0) = [p]_A$ is a well defined bijection on the level of vertices.

We next seek to extend $\phi$ to edges. If two vertices $[c]_0$ and $[c']_0$ share an edge in $C(\Sigma_g) / \sim_0$ then there exist vertices $c$ and $c'$ in $C(\Sigma_g)$ which are related by a type-1 edge. Then there are two curves in the cut systems $c$ and $c'$ which intersect once. Taking a neighbourhood of these curves gives a separating curve, $s$, which separates a punctured torus containing the curves. Extending the cut systems identically to pants decompositions, $p$ and $p'$ containing $s$, we see that there is a type-s edge between $p$ and $p'$. So that there is an edge between $[p]_A = \phi([c]_0)$ and $[p']_A = \phi([c']_0)$. The converse, that $\phi([c]_0)$ and $\phi([c']_0)$ share an edge only if $[c]_0$ and $[c']_0$ do, follows by a similar analysis, this time by removing curves from a pants decomposition to obtain cut systems. Therefore, $\phi$ extends to an isomorphism of graphs.
\end{proof}

In the proof above we saw that $\sim_0$ and $\sim_A$ simply relate vertices corresponding to the same handlebody, so that $H([c]_0)$ and $H([p]_A)$ are well defined handlebodies. Moreover, two handlebodies corresponding to adjacent handlebodies in these complexes form a Heegaard splitting for $\#^{g-1} S^1 \times S^2$ so that they are, in particular, dual handlebodies. We therefore call the quotient graph $C(\Sigma) / \sim_0 \, \cong P(\Sigma) / \sim_A$ the \textbf{dual handlebody graph} and denote it $\mathbf{D(\Sigma)}$. Given the preceding discussion, the following is immediate.

\begin{lemma}
Let $D = (d_1, d_2, ... , d_n)$ be a loop in the dual handlebody graph. Then $(H(d_1), H(d_2), ... , H(d_n))$ is a cyclic sequence of dual handlebodies.
\end{lemma}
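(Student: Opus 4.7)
The plan is to unpack the definition of an edge in the dual handlebody graph $D(\Sigma) = C(\Sigma)/\sim_0 \cong P(\Sigma)/\sim_A$ and apply the observation, already recorded in the paragraph immediately preceding the lemma, that adjacent vertices of $D(\Sigma)$ correspond to handlebodies whose union is a Heegaard splitting of $\#^{g-1} S^1 \times S^2$. Once that is in hand, the result is essentially a restatement of the definition of a cyclic sequence of (minimally) dual handlebodies.

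First I would fix an index $i$ and consider the adjacent vertices $d_i, d_{i+1}$ in $D(\Sigma)$ (indices taken mod $n$, which is permitted precisely because $D$ is a loop). Using the isomorphism $C(\Sigma)/\sim_0 \cong P(\Sigma)/\sim_A$ from the previous lemma, I would lift the edge between $[c]_0 = d_i$ and $[c']_0 = d_{i+1}$ to representatives $c, c' \in C(\Sigma)$ which are connected by a type-$1$ edge. By definition of a type-$1$ edge, $c$ and $c'$ share $g-1$ curves, and the two distinguishing curves meet on $\Sigma$ in a single point.

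Second I would identify the Heegaard splitting $H(c) \cup_\Sigma H(c')$. The $g-1$ common curves bound disks on both sides of $\Sigma$, producing $g-1$ summands of $S^1 \times S^2$, while the remaining pair of curves, intersecting once, sits on a punctured torus summand and realizes the standard genus-$1$ Heegaard splitting of $S^3$. Thus $H(c) \cup_\Sigma H(c') \cong \#^{g-1} S^1 \times S^2$. Since $H(d_i) = H(c)$ and $H(d_{i+1}) = H(c')$ (the handlebody depends only on the equivalence class, as noted in the preceding discussion), this shows that $H(d_i)$ and $H(d_{i+1})$ are minimally dual, hence in particular dual.

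Third, since $D$ is a loop, the index $i$ ranges cyclically over $\mathbb{Z}/n\mathbb{Z}$, so the conclusion of the previous step applies to every consecutive pair, including the pair $(d_n, d_1)$. By the definition of a cyclic sequence of dual handlebodies, $(H(d_1), \ldots, H(d_n))$ is such a sequence. There is no real obstacle here; the only step that required any content was the identification of the Heegaard splitting, and that was already carried out in the text above. The lemma is really a bookkeeping statement that packages the earlier isomorphism of complexes into a form that can be fed into Definition \ref{def:MfldFromHBs}.
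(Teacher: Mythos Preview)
Your proposal is correct and follows exactly the approach the paper intends: the paper declares the lemma ``immediate'' from the preceding paragraph, and you have simply spelled out that immediacy by lifting an edge in $D(\Sigma)$ to a type-$1$ edge in $C(\Sigma)$ and reading off the Heegaard splitting $\#^{g-1}S^1\times S^2$. There is nothing to add.
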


\section{First Bijections}
\label{sec:firstBijections}
\subsection{Morse functions and the dual handlebody complex}

In this subsection we seek to produce a bijection, $D$, between  $\mathfrak{M}_h(\Sigma)$ and loops in $D(\Sigma)$. We define the map below, noting that we will later prove well definedness. 

\begin{definition}
Let $f_t$ be a generic loop of Morse functions, and let $[f_t]_h$ be its image in $\mathfrak{M}_h(\Sigma)$. Let $c_1...c_n$ be the critical times of $f_t$ corresponding to a critical value exchange in a copy of $S_{1,2}$. Let $r_1, ... r_n$ be regular times for $f_t$ with $r_1<c_1<r_2<c_2<...<r_n<c_n$. The map $$D: \mathfrak{M}_h(\Sigma) \to \text{Loops in } D(\Sigma) $$ is given by $D([f_t]_h) = (H(f_{r_1}),  H(f_{r_2}),....  H(f_{r_n}))$
\end{definition}

Much of the work in proving $D$ is a surjection is carried out in \cite{HatLocSch} where the authors assign a loop in $P(\Sigma)$ to a generic loop of Morse functions on $\Sigma$. In that work, there is not a canonical choice of loop in $P(\Sigma)$ corresponding to a loop of generic Morse functions, however by attaching 2-cells to $P(\Sigma)$, the choice can be made unique up to homotopy in the resulting 2-complex. In this paper, the quotients defined deal with the ambiguity.

\begin{proposition}
The map $D$ is a well defined surjection.
\end{proposition}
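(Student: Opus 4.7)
The proof naturally splits into establishing well-definedness of $D$ and its surjectivity.

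For well-definedness, three independences must be verified. First, independence from the choice of regular times $r_i$: between consecutive $S_{1,2}$ critical times $c_{i-1}$ and $c_i$, every intervening critical event is a birth/death or a crossing inside $S_{0,3} \sqcup S_{0,3}$ or $S_{0,4}$, and the preliminary discussion showed that none of these alters $H(f_t)$. Hence $H(f_{r_i})$ depends only on the interval, not on the specific time chosen. Second, invariance under torus crossing replacement: if $f_t$ and $f_t'$ agree outside $[d,e]$, then in particular $H(f_d) = H(f_d')$ and $H(f_e) = H(f_e')$. Since an $S_{1,2}$ exchange strictly changes the associated handlebody to a minimally dual one, the number of such exchanges in $[d,e]$ is forced to match across the two paths, and in either case the contribution to the cyclic handlebody sequence is identical. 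Third, invariance under generic homotopies of loops of Morse functions: here I would follow the Hatcher-Lochak-Schneps analysis, in which each codimension-two Cerf event either leaves the cyclic list of $S_{1,2}$ crossings unchanged, merely reorders an $S_{1,2}$ crossing past an event that does not affect $H(f_t)$, or locally introduces an inverse pair of $S_{1,2}$ crossings, the last case being absorbed by torus crossing replacement.

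For surjectivity, I would begin with a loop $(d_1,\dots,d_n)$ in $D(\Sigma)$ and lift it to a loop in $P(\Sigma)$ via the isomorphism $D(\Sigma) \cong P(\Sigma)/\sim_A$ of the preceding section. Concretely, choose a representative $p_1$ of $d_1$; inductively, for each edge $d_i \to d_{i+1}$, pick a pants decomposition $p_{i+1}'$ with $[p_{i+1}']_A = d_{i+1}$ sharing an S-edge with $p_i$, and then connect $p_{i+1}'$ to a chosen $p_{i+1}$ by a path of A-edges, which exists because both represent the same vertex of $D(\Sigma)$. Closing up with a final A-path back to $p_1$ yields a genuine loop in $P(\Sigma)$, and the Hatcher-Lochak-Schneps construction realizes it as a generic loop of Morse functions $f_t$. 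By construction the $S_{1,2}$ crossings of $f_t$ correspond to the S-edges of the lift, so the resulting handlebody sequence is $(H(d_1),\dots,H(d_n))$, exhibiting the desired preimage under $D$.

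The main obstacle I expect is the third well-definedness check: one must carefully enumerate the codimension-two Cerf moves and verify that, after passing to $D(\Sigma)$ and identifying along torus crossing replacement, each such move leaves the handlebody sequence invariant. Everything else is either bookkeeping or a direct invocation of the cited prior work.
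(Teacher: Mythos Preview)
Your argument is correct and largely parallel to the paper's, with both relying on the Hatcher--Lochak--Schneps realization result for surjectivity. The well-definedness arguments, however, are organized differently. The paper works entirely through the pants-complex model: it observes that the HatLocSch assignment $f_t \mapsto P(f_t)$ is non-canonical in two specific ways (a choice of A-path at certain $S_{0,4}$ crossings, and a choice of length-three path at each $S_{1,2}$ crossing), and then checks that both ambiguities collapse after passing to $D(\Sigma) = P(\Sigma)/{\sim_A}$. You instead verify invariance directly against the equivalence relation defining $\mathfrak{M}_h(\Sigma)$, which is arguably more complete since the paper's proof does not explicitly revisit invariance under torus crossing replacements or under homotopies through generic loops.

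One correction to your outline: the third check is much easier than you anticipate, not harder. A homotopy \emph{through generic loops} means that for every parameter value $s$ the slice $f_{\cdot,s}$ is already a generic path; consequently no codimension-two Cerf events occur at all. The finite set of critical times, together with their types ($S_{0,3}\sqcup S_{0,3}$, $S_{0,4}$, $S_{1,2}$, birth, death), varies by isotopy only, and the handlebodies $H(f_{r_i})$ are locally constant in $s$. So this step is immediate, and the enumeration of codimension-two moves you flag as the ``main obstacle'' is not needed here. Your first two checks are fine as written; in particular your observation that an $S_{1,2}$ exchange strictly changes the handlebody (since $H\cup_\Sigma H \cong \#^g S^1\times S^2$, not $\#^{g-1}$) correctly forces the crossing counts to agree under a torus crossing replacement.
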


\begin{proof}
We will work with the model of $D(\Sigma)$ given by loops in the pants complex modulo A-moves. In Lemma 3 of \cite{HatLocSch}, it is shown that the assignment of a Morse function, $f$, on $\Sigma$ to $P(f)$ is surjective on the level of vertices. Moreover, given a generic path of Morse functions $f_t$ which is critical at $t=c$, they analyze the change between $P(f_{c-\epsilon})$ and $P(f_{c+\epsilon})$ to assign a path, $P(f_t)$, in $P(\Sigma)$ to $f_t$. Finally, they show that every path is $P(f_t)$ for some generic path of Morse functions $f_t$.

The path $P(f_t)$ in $P(\Sigma)$ is not canonical, and there are two situations in which we must make choices. If $c$ is a critical time involving a specific configuration in a 4-holed sphere then, up to a homeomorphism of $\Sigma$, $P(f_t)$ changes from the cut system on the left of Figure \ref{fig:4HSAmbiguity} to the cut system on the right of Figure \ref{fig:4HSAmbiguity}. There are also two equally valid paths of minimal length between these pants decompositions shown in the figure. Since all of the moves involved here are A-moves, all of these pants decompositions are actually the same point in $D(\Sigma)$ so that the ambiguity disappears in the quotient. 

In the second case, $c$ is a critical time involving a crossing change in a 2-holed torus. Here, up to a homeomorphism of $\Sigma$, $P(f_t)$ changes from the cut system on the left of Figure \ref{fig:6ASPath} to the cut system on the right of Figure \ref{fig:6ASPath}. Also shown in this figure are two equally valid paths in $P(\Sigma)$ between these two cut systems. Note, however, that after quotienting by $A$ moves, both of these paths collapse to a unique edge between the left and right cut systems. Therefore, we have a unique path $D(f_t)$ in $D(\Sigma)$ corresponding to a generic path $f_t$.
\end{proof}

\begin{figure}
    \centering
    \includegraphics[scale=.2]{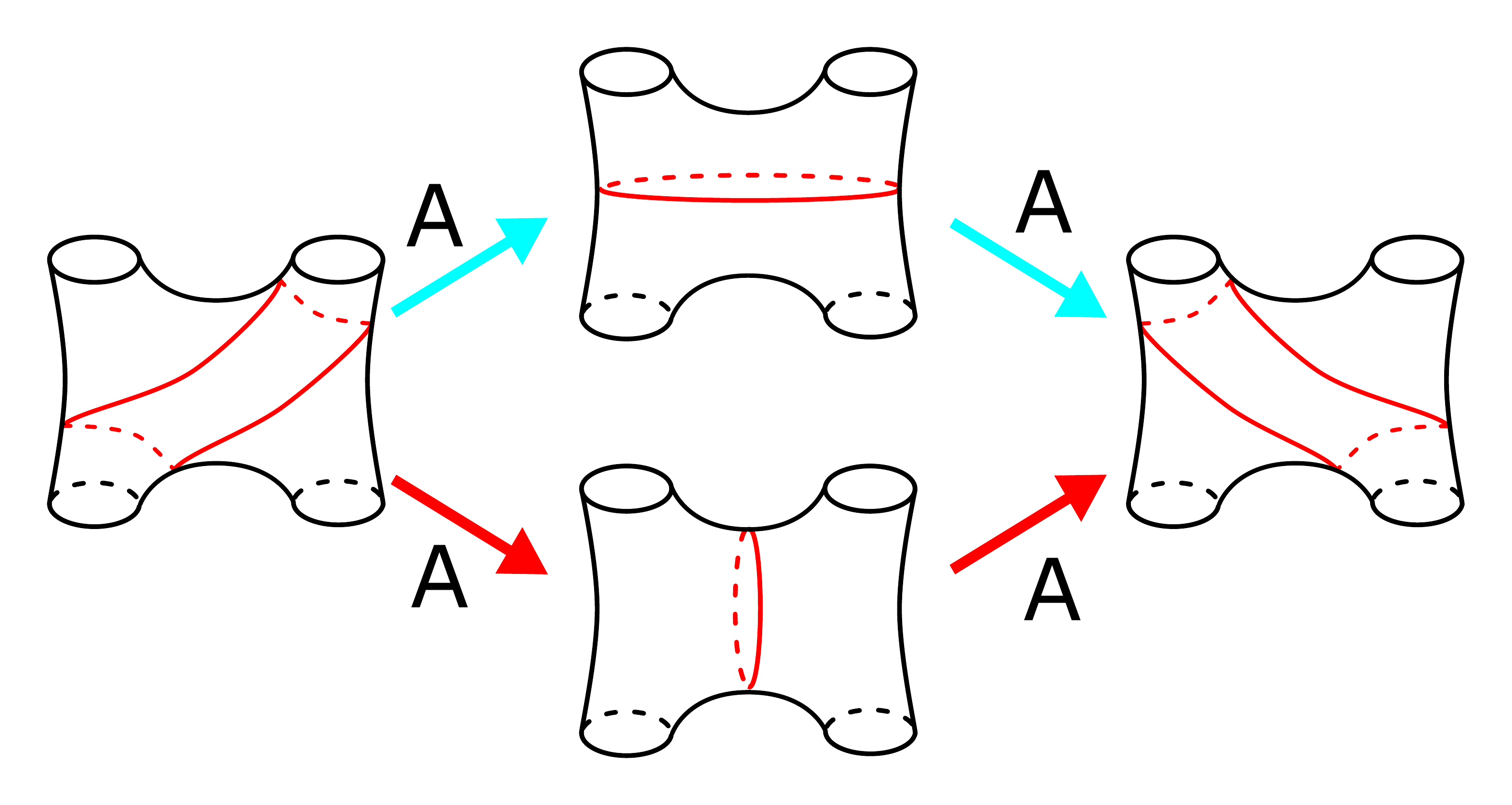}
    \caption{Two paths in the pants complex which both become a constant path when quotienting by A-moves to pass to the dual handlebody complex.}
    \label{fig:4HSAmbiguity}
\end{figure}

\begin{figure}
    \centering
    \includegraphics[scale=.2]{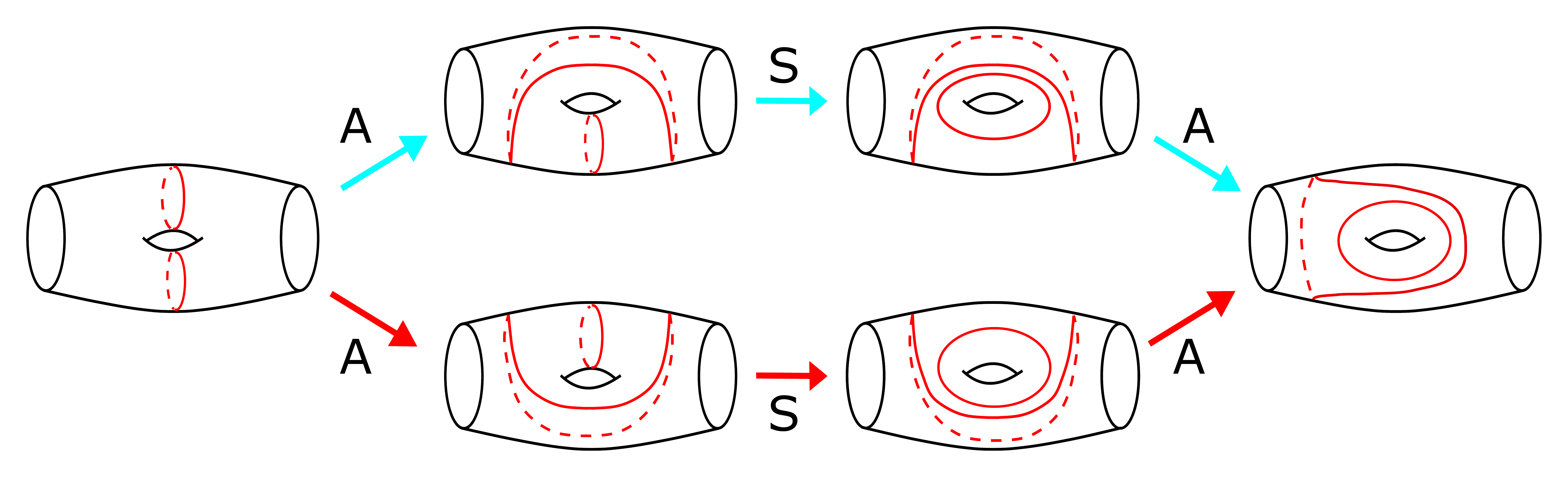}
    \caption{Two paths in the pants complex between the same vertices. These two paths collapse to the same edge in the dual handlebody complex.}
    \label{fig:6ASPath}
\end{figure}

We next seek to show that the map $D$ defined above is an injection. The following theorem, which the author of \cite{Kud99} attributes to Matveev will be useful in our proof.

\begin{theorem} (Theorem 8 of \cite{Kud99})
\label{thm:boundaryMorseConnected}
Let $S_{g,b}$ be a surface of genus $g$ with $b$ boundary components split into two collections, $b_+$ and $b_-$. Let $f_0: S_{g,b} \to \mathbb{R}$ and  $f_1: S_{g,b} \to \mathbb{R}$ be two Morse functions which take the constant value of $-1$ on the circles in $b_-$ and the constant value of $1$ on the circles in $b_+$. Then $f_0$ and $f_1$ are connected by a generic path, $f_t$, which is constant in a neighbourhood all of the boundary circles. 

\end{theorem}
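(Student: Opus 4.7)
The plan is to set up a Cerf-theoretic framework in which the boundary condition is built into the ambient function space, then apply a transversality argument to the straight-line homotopy. Concretely, I would fix collar neighbourhoods $N_-$ and $N_+$ of the circles in $b_-$ and $b_+$ respectively, and let $\mathcal{F}$ denote the affine subspace of smooth functions $S_{g,b}\to\mathbb{R}$ that are identically $-1$ on $N_-$ and identically $+1$ on $N_+$. The first step is to reduce to the case $f_0, f_1 \in \mathcal{F}$: since each $f_i$ already takes the value $\pm 1$ on the boundary circles themselves, a collar isotopy supported arbitrarily close to the boundary flattens $f_i$ to be constant on $N_\pm$ without introducing new interior critical points, and the flattened function is connected to $f_i$ by a short generic path that is itself constant on the boundary circles.

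Once both endpoints lie in $\mathcal{F}$, the straight-line homotopy $f_t = (1-t)f_0 + tf_1$ remains in the affine space $\mathcal{F}$ for every $t$, so it is automatically constant on $N_- \cup N_+$ at every time. Typically this path will fail to be Morse at some intermediate times, so the second step is to perturb it via a Cerf stratification argument. The non-Morse locus in the full function space decomposes into strata of codimension $\geq 1$, with the codimension-one stratum consisting of functions having either a single birth/death singularity or a single coincidence between two distinct Morse critical values. Because these conditions are detected by jets at interior critical points, they intersect $\mathcal{F}$ in subsets of the same codimension. One may then perturb $f_t$ using variations compactly supported in the interior of $S_{g,b}$ so that the perturbed path crosses only the codimension-one stratum, does so transversally, and meets it at finitely many isolated times. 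The resulting path is the desired generic path, and, being a path in $\mathcal{F}$, is constant on $N_- \cup N_+$ throughout.

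The step I expect to be the main obstacle is verifying rigorously that interior-supported perturbations form a rich enough class to achieve transversality to each stratum inside the boundary-constrained space $\mathcal{F}$. This reduces to the statement that the relevant jet evaluation maps at interior critical points are submersive when restricted to the space of admissible perturbations, which holds because any jet germ at an interior point of $S_{g,b}$ can be realized as the germ of a function vanishing identically on $N_- \cup N_+$. A complete argument would invoke the classical Cerf theory of one-parameter families, which handles exactly this sort of constrained transversality; alternatively, in the surface setting one can bypass general Cerf theory and construct the path directly by induction on the number and type of critical points, using explicit local models for birth, death, and critical-value-exchange moves to connect the two functions while preserving the boundary behaviour, which is essentially the strategy of Matveev's original proof.
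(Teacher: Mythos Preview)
The paper does not give its own proof of this statement: it is quoted verbatim as Theorem~8 of \cite{Kud99} (attributed there to Matveev) and used as a black box in the proof of Proposition~\ref{prop:MorseFunctionPath}. So there is nothing in the paper to compare your proposal against.

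That said, your Cerf-theoretic outline is a sound way to establish the result. The key observation --- that the relevant strata of the discriminant are detected by jets at interior critical points, so compactly supported interior perturbations suffice for transversality inside the boundary-constrained affine space $\mathcal{F}$ --- is exactly right, and it is the standard mechanism by which relative Cerf theory works. One small point to be careful with in your reduction step: the hypothesis only says $f_i$ is constant \emph{on} each boundary circle, not that the boundary circles are regular level components, so you should argue (or assume, as is implicit in the paper's usage) that the normal derivative along each boundary circle is nonvanishing before asserting that the collar-flattening introduces no new critical points. With that caveat, the argument goes through; the alternative you mention at the end, building the path combinatorially from explicit local models, is closer in spirit to how results of this type are often proved in the surface-topology literature.
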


Recall that in Definition \ref{def:MorseRelations}, we defined a relation on paths of Morse functions which we called a torus crossing replacement. This relation involved paths of Morse functions which involve at most one critical value crossing in a copy of $S_{1,2}$. The following proposition will give us a useful path of Morse functions involving no critical value crossings in  $S_{1,2}$ between Morse functions on $\Sigma_g$ yielding the same handlebody.

\begin{proposition}
\label{prop:MorseFunctionPath}
Let $f_0: \Sigma_g \to \mathbb{R}$ and $f_2: \Sigma_g \to \mathbb{R}$ be Morse functions with $H(f_0) = H(f_2)$. Then there exists a generic path of Morse functions, $f_t$, such that $f_t$ contains no critical value crossings in a copy of $S_{1,2}$.
\end{proposition}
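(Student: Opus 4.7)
The plan is to first equate the cut systems of $f_0$ and $f_2$ using Morse modifications that avoid $S_{1,2}$ crossings, then cut $\Sigma_g$ along the common cut system to reduce to a planar surface, where Theorem~\ref{thm:boundaryMorseConnected} applies. To begin, since $H(f_0)=H(f_2)$, the cut systems $C(f_0)$ and $C(f_2)$ are $\sim_0$-equivalent (they represent the same vertex of $C(\Sigma_g)/\!\sim_0\,\cong\,P(\Sigma_g)/\!\sim_A\,\cong D(\Sigma_g)$), hence connected by a sequence of type-0 moves; equivalently, after extending to pants decompositions, by A-moves in $P(\Sigma_g)$. As reviewed before Definition~\ref{def:MorseRelations}, an A-move is realized by a generic path of Morse functions whose only critical event is a critical value exchange in an $S_{0,4}$ subsurface, which preserves the handlebody. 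Concatenating such model paths yields a generic path from $f_0$ to a Morse function $f_0'$ with $C(f_0')=C(f_2)$ and no $S_{1,2}$ crossings.

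Next, I would arrange $c=C(f_0')=C(f_2)$ to appear as actual level sets at matching heights and matching local orientations in both functions. Since each curve of $c$ is isotopic to a level set of each of $f_0'$ and $f_2$, an ambient isotopy composed with a reparametrization of the target achieves this without any critical events. Should the up/down orientations around some $c_i$ disagree, one can append additional $S_{0,4}$ exchanges to the first path that slide $c_i$ past a nearby saddle, flipping its local orientation. Now cut $\Sigma_g$ along $c$ to obtain the planar surface $\Sigma'\cong S_{0,2g}$. The restrictions $f_0'|_{\Sigma'}$ and $f_2|_{\Sigma'}$ are Morse, agree on each boundary circle, and consistently partition these circles into the subsets $b_+$ and $b_-$ used in Theorem~\ref{thm:boundaryMorseConnected}. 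That theorem then produces a generic path on $\Sigma'$ connecting them through Morse functions constant near the boundary. Regluing yields a generic path on $\Sigma_g$ from $f_0'$ to $f_2$; because $\Sigma'$ has genus zero, no $S_{1,2}$ subsurface embeds in it, so none of the critical value exchanges along the reglued portion occur in $S_{1,2}$. Concatenating with the path from the first step produces the desired generic path.

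The main obstacle is making the first step precise: explicitly realizing each A-move as a generic Morse function path with a single $S_{0,4}$ critical value exchange and no other handlebody-changing events. The correspondence is implicit in the preceding analysis of the map $D\colon \mathfrak{M}_h(\Sigma)\to\text{Loops in }D(\Sigma)$, but one expects to construct explicit model paths, possibly introducing a birth pair to place two saddles inside the relevant $S_{0,4}$ before performing the exchange. The orientation-matching subtlety in the second step is a secondary difficulty that can be absorbed into the same framework via extra model paths of the same $S_{0,4}$ type.
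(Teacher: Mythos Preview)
Your overall strategy is the same as the paper's: realize A-moves as $S_{0,4}$ crossings to align the pants/cut data, then cut along the common cut system to reduce to a planar surface where Theorem~\ref{thm:boundaryMorseConnected} applies. The substantive difficulties, however, lie exactly in the second step you gloss over. Your claim that ``an ambient isotopy composed with a reparametrization of the target'' suffices to match the heights of the cut curves is false: both operations preserve the \emph{relative order} of level-set heights, and this order may well differ between $f_0'$ and $f_2$. The paper resolves this by routing through an intermediary Morse function $f_1$ (the height function of Figure~\ref{fig:f1}) whose special property---all splitting saddles precede all merging saddles---allows the heights of the cut curves to be freely permuted; one then adjusts $f_1$, not $f_0'$ or $f_2$. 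Similarly, your proposed orientation flip via ``$S_{0,4}$ exchanges that slide $c_i$ past a nearby saddle'' does not do what you want: passing a level curve across a saddle replaces it by a different curve rather than reversing the normal of the same $c_i$. The paper instead uses a local modification introducing two cancelling birth/death pairs (Figure~\ref{fig:reverseNormal}), which genuinely inverts the normal direction at $c_i$ with no critical value crossings at all.

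These two devices---the intermediary $f_1$ and the birth/death normal flip---are the missing ingredients needed to make the two functions literally agree in a neighbourhood of $C$, which is what allows the path produced by Theorem~\ref{thm:boundaryMorseConnected} on $S_{0,2g}$ to be glued back to a path on $\Sigma_g$. Once that is in place, the remainder of your argument (cut, observe $S_{0,2g}$ contains no $S_{1,2}$ subsurface, apply the theorem, reglue) is exactly what the paper does. For the ``main obstacle'' you flag---realizing each A-move by an $S_{0,4}$ crossing---the paper simply cites Lemma~3 of \cite{HatLocSch}.
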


\begin{proof}
We will show that both functions can be connected by such a path to an intermediary function $f_1$. Consider the Morse function $f_1: \Sigma \to \mathbb{R}$ given by the height function of the surface in Figure \ref{fig:f1}. Pictured is the genus 3 case but the important property of this function is that all splitting saddles occur before any merging saddles. After an overall homeomorphism of $\Sigma$, we may assume that the handlebody $H(f_0)=H(f_2)$ is the same as $H(f_1)$. Since the handlebodies $H(f_0)$ and $H(f_1)$ are the same, Corollary 1 of \cite{FeLu} implies that the pants decompositions $P(f_0)$ and $P(f_1)$ are related by a sequence of A-moves. By Lemma 3 of \cite{HatLocSch},  each of these A-moves can be realized as a crossing change of the Morse function in a 4-punctured sphere. After all of these A-moves have been realized, we have a homotopy $f_t$, $t\in [0, \frac{1}{6}]$, with $P(f_{\frac{1}{6}}) = P(f_1)$.

We next seek to make the values of $f_{\frac{1}{6}}$ and $f_1$ identical on a cut system. We consider the cut system, $C = C(f_1)$, shown in Figure \ref{fig:f1}. These are components of level sets obtained immediately after each saddle singularity which splits a level set component into two components. Since all of the splitting saddles occur before any of the merging saddles, we can freely interchange the relative order of the cut system given by the height. After arranging the curves in the cut system of $f_1$ so that they occur in the same order as that of $f_{\frac{1}{6}}$, we may dilate or contract regions of $f_1$ until the curves in $C$ take the same values as they do in $f_{\frac{1}{6}}$. We let (the reverse of) this homotopy through Morse functions be $f_t$ for $t \in [\frac{5}{6}, 1]$.

Next, we seek to have $f_{\frac{1}{6}}$ and $f_{\frac{5}{6}}$ agree in a neighbourhood of $C$. The only ambiguity remaining in the neighbourhood of a curve is the normal direction to the curve given by increasing values of the Morse function. This can be changed by locally modifying the function around each curve as shown in Figure \ref{fig:reverseNormal}. Using this local modification, we change the normal directions to the curves in $C$ in $f_{\frac{1}{6}}$ to agree with those in $f_{\frac{5}{6}}$ by a homotopy $f_t$ $t \in [\frac{1}{6}, \frac{2}{6}]$ with all critical times corresponding to birth/death singularities.

The neighbourhood of each curve in $C$ has two boundary components, one of which takes values lower than the curve, and the other of which takes larger than the curve. Remove a neighbourhood of $C$ from $\Sigma$ to obtain a sphere with $2g$ boundary components. In both functions, take the resulting lower value boundary component and drag it above the rest of the surface and also drag the higher value boundary component below the rest of the surface. We now have two relative Morse functions  $f_{\frac{2}{6}}'$ and  $f_{\frac{5}{6}}'$ on a $2g$ punctured sphere built from $g$ circles. Note that any critical value exchanges in these functions can not occur in a $S_{1,2}$, as the resulting surface has no subsurfaces homeomorphic to $S_{1,2}$.

Cancel any maxima and minima in the resulting functions by homotopies $f_t'$ for $t \in [\frac{2}{6}, \frac{3}{6}]$, and $t \in [\frac{4}{6}, \frac{5}{6}]$. Then $f_\frac{3}{6}'$ and $f_\frac{4}{6}'$ are relative Morse functions with $g$ minima circles and $g$ maxima circles. By Theorem \ref{thm:boundaryMorseConnected}, the space of such functions is connected through a generic path $f_t$ for  $t \in [\frac{3}{6}, \frac{4}{6}]$ which is constant near the boundary circles. Since the path is constant near the boundary circles it can be viewed as a generic path of functions on the original surface $\Sigma$, giving the desired homotopy $f_t$ for $t \in [\frac{2}{6}, \frac{5}{6}]$ thus completing the homotopy $f_t$ for $t \in [0,1].$ Repeating the process we can find a generic path $f_t$ for $t \in [1,2]$. Composing the two homotopies gives the desired result.
\end{proof}

\begin{figure}
    \centering
    \includegraphics[scale=.35]{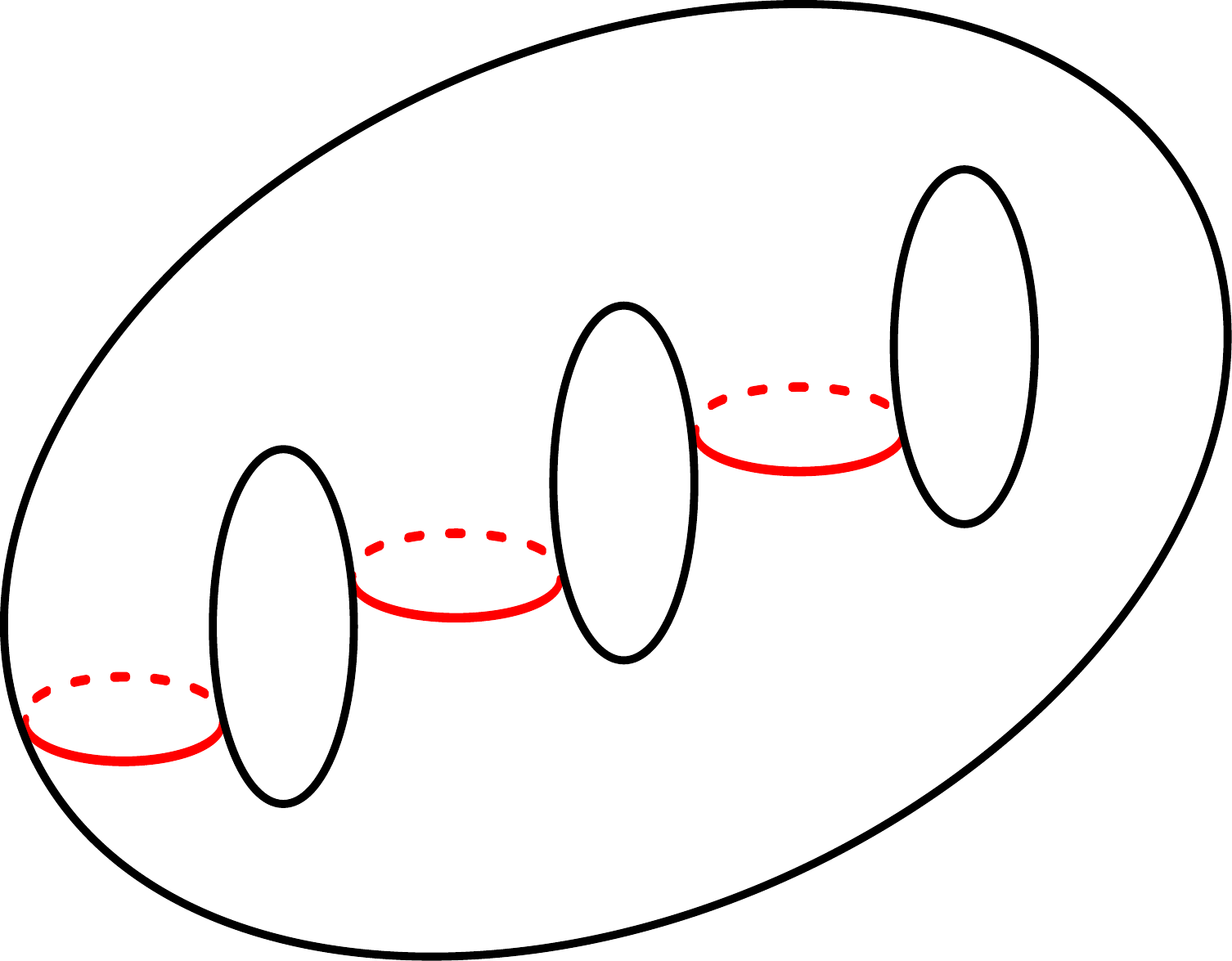}
    \caption{The Morse function $f_1$ in the proof of Proposition \ref{prop:MorseToComplexInjection} is the height function of the surface pictured. The level curves shown can be homotoped to lie in any order with respect to the height function.}
    \label{fig:f1}
\end{figure}

\begin{figure}
    \centering
    \includegraphics[scale=.4]{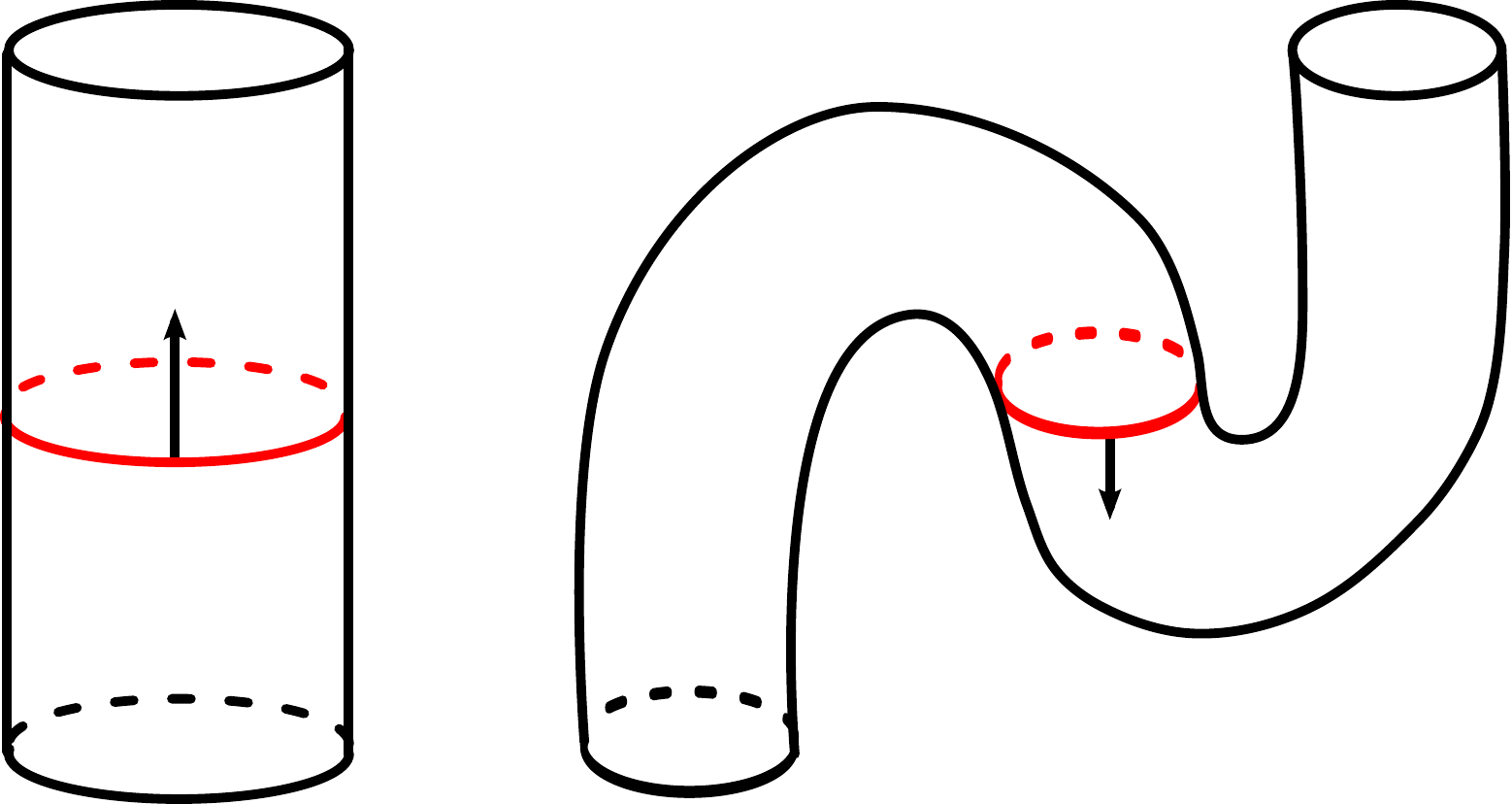}
    \caption{We can invert the normal to a curve given by increasing values of a Morse function at the cost of introducing two cancelling pairs.}
    \label{fig:reverseNormal}
\end{figure}

Given a generic path of Morse functions $f_t$, we may draw the critical value set in $\mathbb{R} \times I$ as a Cerf graphic \cite{JC}. To add clarity to this picture, we will place all of singularities of $f_t$ which are not critical value crossings in a torus into a Cerf box and draw all of the tori crossings to highlight these critical times. Examples of such graphics with two critical value crossings in tori are given in Figure \ref{fig:cerfBoxReplacement}. Proposition \ref{prop:MorseFunctionPath} showed that given two Morse functions $f_0$ and $g_0$ with $H(f_0) = H(g_0)$ we can find a path with no critical value crossings in tori. We represent this homotopy by the Cerf box $B_{fg_0}$ and it is the main input into the main proposition of this section. The proof is also illustrated in Figure \ref{fig:cerfBoxReplacement}.

\begin{proposition}
\label{prop:MorseToComplexInjection}
The assignment $D: \mathfrak{M}_h(\Sigma_g) \to \text{Loops in } D(\Sigma_g)$ is an injection.
\end{proposition}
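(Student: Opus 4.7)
The plan is to show that if $D([f_t]_h) = D([g_t]_h)$ then $[f_t]_h = [g_t]_h$, by transforming $f_t$ into $g_t$ through torus crossing replacements and homotopies of generic loops. Unpacking the hypothesis: since $D$ records the cyclic sequence of handlebodies attained between consecutive torus crossings, the loops $f_t$ and $g_t$ must have the same number $n$ of critical value exchanges in an $S_{1,2}$, occurring at times $c_1 < \cdots < c_n$ and $d_1 < \cdots < d_n$ respectively, with matching handlebodies $H(f_{r_i}) = H(g_{s_i})$ at intermediate regular times.

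The key technical input is Proposition \ref{prop:MorseFunctionPath}. Applied to each pair of intermediate Morse functions, it supplies generic paths $B_i^-$ and $B_i^+$ containing no torus crossings, with $B_i^-$ connecting $f_{c_i - \epsilon}$ to $g_{d_i - \epsilon}$ and $B_i^+$ connecting $g_{d_i + \epsilon}$ to $f_{c_i + \epsilon}$; these are the Cerf boxes pictured in Figure \ref{fig:cerfBoxReplacement}. The first main step is to apply a torus crossing replacement near each $c_i$, substituting the original $f_t$ torus crossing with the concatenated path $B_i^- \cdot (g_t|_{[d_i - \epsilon, d_i + \epsilon]}) \cdot B_i^+$. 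The new segment contains exactly one torus crossing (inherited from $g_t$) and agrees with $f_t$ at the boundary of the replacement interval, so it is a valid torus crossing replacement in the sense of Definition \ref{def:MorseRelations}. Performing this at each $i$ yields a loop $\tilde{f}_t$ equivalent to $f_t$ in $\mathfrak{M}_h(\Sigma_g)$ whose torus crossings now match those of $g_t$ up to reparameterization.

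The second main step is to identify the portions of $\tilde{f}_t$ between consecutive torus crossings with those of $g_t$. In $\tilde{f}_t$, the segment between the $i$-th and $(i{+}1)$-th torus crossings is the torus-free concatenation $B_i^+ \cdot (f_t|_{[c_i + \epsilon, c_{i+1} - \epsilon]}) \cdot B_{i+1}^-$, which has the same endpoints $g_{d_i + \epsilon}$ and $g_{d_{i+1} - \epsilon}$ as the corresponding segment of $g_t$. A further torus crossing replacement over this entire interval, valid because both the old and new segments contain zero (hence at most one) $S_{1,2}$ exchange, completes the identification of $\tilde{f}_t$ with $g_t$. The hard part will be checking the technical hypotheses of Definition \ref{def:MorseRelations} in this last step, since the definition nominally requires the replacement interval to contain a critical time of $\tilde{f}_t$; this is handled either by widening the interval to include a neighboring birth--death or non-$S_{1,2}$ critical value exchange of $\tilde{f}_t$, or by first inserting a cancelling birth--death pair via a homotopy through generic loops of Morse functions, after which the torus crossing replacement directly applies.
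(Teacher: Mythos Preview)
Your proposal is correct and follows essentially the same strategy as the paper: use Proposition~\ref{prop:MorseFunctionPath} to produce torus-free Cerf boxes between corresponding Morse functions, then perform torus crossing replacements to convert $f_t$ into $g_t$. The paper merely orders the replacements differently---it first inserts each Cerf box followed by its own inverse at the regular times (so that the functions $g_i$ appear along the loop) and then replaces the one-crossing segment between consecutive $g_i$'s by $g_t|_{[i,i+1]}$---but the substance is identical, and your closing remark about ensuring a critical time in the replacement interval applies equally to the paper's insertion step.
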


\begin{proof}
Let $f_t$ and $g_t$ be loops of Morse functions with $D(f_t) = D(g_t)$. As the images of the loops have the same length in $D(\Sigma_g)$ they both must contain the same number of critical value crossings in tori, and we call this number $n$. By a homotopy through generic paths we may assume that $t \in [0,n]$ and that the functions $f_0$, $f_1$,... $f_{n-1}$ and  $g_0$, $g_1$,... $g_{n-1}$ are all regular times for the paths $f_t$ and $g_t$ respectively and that there is exactly one critical value crossing in a torus between these times. Since $D(f_t) = D(g_t)$ we may assume, after a cyclic reordering, that $H(f_i) = H(g_i)$. 

By Proposition \ref{prop:MorseFunctionPath} there exists a path $fg^i_t$ of Morse functions with $fg^i_0 = f_i$ and  $fg^i_1 = g_i$ with no critical values crossings in tori. After a homotopy through generic paths, we may assume that $f_t$ is constant in $t$ for a short amount of time after $f_i$, for all $i \in \{1 , ..., n-1 \}$. At each $i$, we may then replace the constant homotopy by the homotopy $fg^i_t$ followed by the inverse homotopy $fg^i_{1-t}$ at every $i$. Since these homotopies have no crossings in tori, we have performed a (trivial) torus crossing replacement. The Cerf graphic at this stage is shown in the middle of Figure \ref{fig:cerfBoxReplacement}.

Now in between the inserted homotopy $fg^i_t$ and its inverse we have the function $g_i$. Moreover, in between each $g_i$ and $g_{i+1}$ (with indices taken mod $n$) there is at most critical value exchange in a torus. We may then replace this portion of the homotopy with $g_t$, $t \in [i,i+1]$ by a torus crossing replacement. At the end of this process we have reached the generic path $g_t$ $t \in [0,n]$ so that $f_t$ and $g_t$ are related by torus crossing replacements and homotopies through generic paths, as desired. 

\end{proof}

\begin{figure}
    \centering
    \includegraphics[scale=.28]{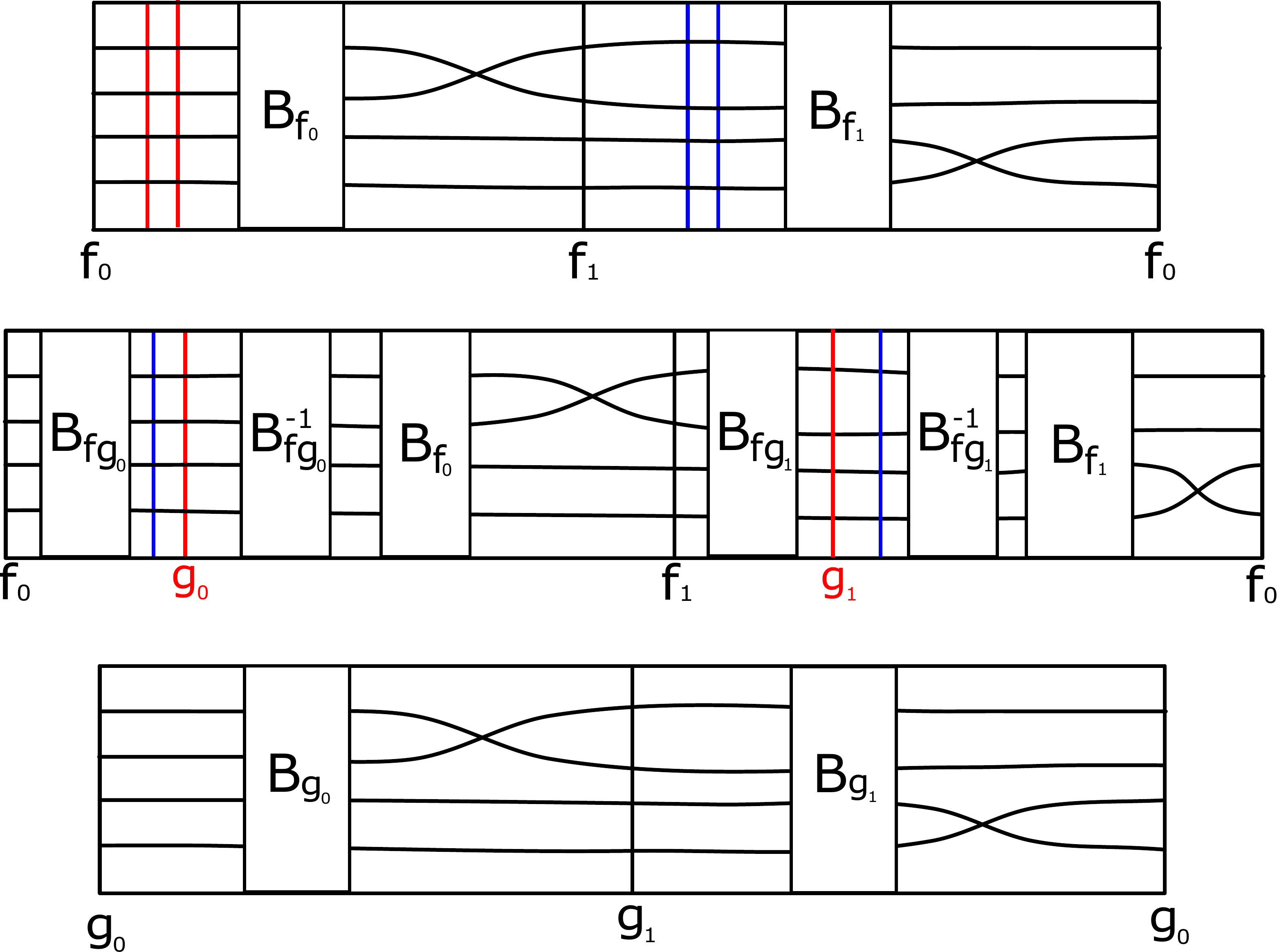}
    \caption{An example of the torus crossing replacements in Proposition \ref{prop:MorseToComplexInjection} in the case of two crossings in tori. In going from the top picture to the middle picture the constant homotopies in the red and blue boxes are replaced by the homotopies between $f_0$ and $g_0$ and $f_1$ and $g_1$, composed with their inverses. In going from the middle picture to the bottom picture, the blue box wraps around from the right side of the Cerf graphic to the left side of the Cerf graphic. }
    \label{fig:cerfBoxReplacement}
\end{figure}

\subsection{The dual handlebody complex and thin multisections}
\label{sec:DHtoMS}

In Section \ref{sec:constructingMultisections} we described how to take a sequence of dual handlebodies and produce a  multisection. We also noted that if the handlebodies were minimally dual, then the induced multisection was thin. By factoring through this construction, we get a map, $T$, from loops in $D(\Sigma)$ to the set of thin multisections with multisection surface $\Sigma_g$ up to diffeomorphism, which we denoted $T(\Sigma_g)$. We seek to show that $T: \text{loops in } D(\Sigma_g) \to T(\Sigma_g)$ is a bijection.

\begin{proposition}
\label{prop:loopsAndThinMultisections}
$T$ is a bijection between $\text{loops in } D(\Sigma_g)$ and the set $T(\Sigma_g)$ of thin multisections with multisection surface $\Sigma_g$ up to diffeomorphism.
\end{proposition}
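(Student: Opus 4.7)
The plan is to verify well-definedness, surjectivity, and injectivity of $T$ separately, reducing each to the combinatorial/geometric content already present in Definitions \ref{def:MfldFromHBs} and \ref{def:multisection}.

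For well-definedness, I would inspect Definition \ref{def:MfldFromHBs}: the only non-canonical step is the capping of each boundary component $\#^{k_i}S^1\times S^2$ by a copy of $\natural^{k_i}S^1\times B^3$, which is unique up to diffeomorphism rel boundary by \cite{LP}. Hence $X^4(L)$ and the induced sectors $X_i = H_i\cup H_{i+1}\cup \natural^{k_i}S^1\times B^3$ are determined up to multisection diffeomorphism. Moreover, since $L$ is a loop in $D(\Sigma_g)$, each consecutive pair $(H_i,H_{i+1})$ is minimally dual, forcing $k_i = g-1$, so the resulting multisection is thin. A direct check of the four axioms of Definition \ref{def:multisection} then shows $T(L)$ lies in the target set.

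For surjectivity I would run the construction backwards: given a thin multisection $X = X_1\cup\cdots\cup X_n$, set $H_i = X_i\cap X_{i+1}$. Axiom (3) makes each $H_i$ a handlebody with boundary $\Sigma_g$, and axioms (1), (4) together with thinness force $H_{i-1}\cup_\Sigma H_i$ to be a Heegaard splitting of $\#^{g-1}S^1\times S^2$, so consecutive handlebodies are minimally dual. Thus $L := (H_1,\ldots,H_n)$ is a loop in $D(\Sigma_g)$, and each sector $X_i$ sits in $X$ with $\partial X_i = H_{i-1}\cup_\Sigma H_i$ and diffeomorphism type $\natural^{g-1}S^1\times B^3$; by the uniqueness in \cite{LP} this matches piece by piece with the construction of $T(L)$, giving $T(L)\cong X$ as multisections. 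For injectivity, a multisection diffeomorphism $\phi\colon X^4(L)\to X^4(L')$ sends $X_i\cap X_{i+1}$ to $X_i'\cap X_{i+1}'$, so $\phi|_{\Sigma_g}$ carries $H_i$ to $H_i'$ for every $i$, showing that $L$ and $L'$ represent the same loop in $D(\Sigma_g)$ once the action of $\mathrm{Diff}(\Sigma_g)$ absorbed into the notion of multisection diffeomorphism is accounted for.

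The main obstacle I anticipate is the surjectivity step: one must argue that, given the two boundary handlebodies $H_{i-1}$ and $H_i$ meeting along $\Sigma_g$, the 4-dimensional sector $X_i$ with prescribed diffeomorphism type $\natural^{g-1}S^1\times B^3$ is unique up to a diffeomorphism preserving this boundary decomposition, so that the $X_i$ reassemble to a multisection diffeomorphic to $T(L)$. This is where the uniqueness of \cite{LP} enters essentially, and coherently matching these local uniqueness statements along all shared boundary handlebodies is the technical heart of the proof.
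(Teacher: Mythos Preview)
Your approach is essentially the same as the paper's: injectivity by restricting a multisection diffeomorphism to the pairwise intersections $X_i\cap X_{i+1}=H_i$, and surjectivity by reading off the cyclic sequence of handlebodies $H_{i,i+1}$ from a given thin multisection and invoking the uniqueness of the $\natural^{g-1}S^1\times B^3$ filling from \cite{LP}. Your explicit treatment of well-definedness is something the paper promises earlier but does not spell out in this proof, and your flagging of the $\mathrm{Diff}(\Sigma_g)$ ambiguity in the injectivity step is well-placed---the paper's proof asserts the multisection diffeomorphism is ``rel $\Sigma_g$,'' which is not literally part of Definition~4, so your hedge there is appropriate rather than a defect.
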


\begin{proof}
We first show that $T$ is injective. Let $L = (H_1, H_2, ..., H_n)$ and let $L' = (H_1', H_2', ..., H_n')$ be two loops in $D(\Sigma_g$ with $T(L) = T(L')$. Then the multisections $X^4(L) = X_1 \cup X_2 \cup ... \cup X_n$ and $X^4(L') = X_1' \cup X_2' \cup ... \cup X_n'$ are diffeomorphic as multisections. Recall that this means that there is a diffeomorphism of the 4-manifolds rel $\Sigma_g$ sending $X_i$ to $X_i'$. In the construction of $T(L)$ we have set $\partial X_i = H_i \cup H_{i+1}$.  Restricting the overall diffeomorphism of the multisection to each $H_i$ gives a homeomorphism rel $\Sigma_g$ from $H_i$ to $H_i'$ so that $H_i = H_i'$ and so $L = L'$.

Next we show that $T$ is surjective. Let $M(X)$ be a thin multisection of $X$ given by $X = X_1 \cup X_2 \cup .. \cup X_n$. Then (with notation as in Definition \ref{def:multisection}) the handlebodies $H_{i-1,i}$ and $H_{i,i+1}$ are minimally dual handlebodies. Then the sequence $(H_{1,2}, H_{2,3}, ... H_{n,1})$ corresponds to a loop $L$ in $D(\Sigma)$ with $T(L) = M(X)$ up to possibly a diffeomorphism of the multisection $M(X).$
\end{proof}

We compile the results of this section into the following summary theorem.

\begin{theorem}
\label{thm:firstBijections}
There are bijections $\mathfrak{M}_h(\Sigma_g) \leftrightarrow \text{Loops in } D(\Sigma_g)  \leftrightarrow T(\Sigma_g)$.
\end{theorem}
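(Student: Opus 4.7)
The plan is to assemble the theorem directly from the propositions already proved in the two preceding subsections, since each half of the claimed chain of bijections has been established independently. The main work has been front-loaded into the propositions, and so the proof itself will be short; I would structure it as two separate citations wired together.

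For the first bijection $\mathfrak{M}_h(\Sigma_g) \leftrightarrow \text{Loops in } D(\Sigma_g)$, I would invoke the map $D$ defined above. Well-definedness and surjectivity of $D$ on loops (rather than just on paths between fixed endpoints) follow from the preceding proposition, whose proof traces the argument of \cite{HatLocSch} and verifies that the two types of ambiguity in the corresponding path in $P(\Sigma)$ (the $4$-holed sphere ambiguity of Figure \ref{fig:4HSAmbiguity} and the $2$-holed torus ambiguity of Figure \ref{fig:6ASPath}) both collapse after quotienting by $A$-moves. Injectivity is precisely Proposition \ref{prop:MorseToComplexInjection}, whose torus-crossing-replacement argument is the genuinely nontrivial ingredient and rests on Proposition \ref{prop:MorseFunctionPath}. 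I would simply cite these.

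For the second bijection $\text{Loops in } D(\Sigma_g) \leftrightarrow T(\Sigma_g)$, I would invoke the map $T$ built from the construction $X^4(L)$ of Definition \ref{def:MfldFromHBs} together with its induced multisection structure $T(L)$. Proposition \ref{prop:loopsAndThinMultisections} establishes that $T$ is a bijection: injectivity is proved by restricting a multisection diffeomorphism to each handlebody $H_i = X_{i-1}\cap X_i$, and surjectivity is proved by reading off the loop $(H_{1,2},H_{2,3},\dots,H_{n,1})$ from an arbitrary thin multisection. No further argument is needed.

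Composing $D$ and $T$ then yields the chain of bijections asserted by the theorem. I do not anticipate any obstacle at this stage, since all the real content — in particular the injectivity of $D$ via the torus-crossing-replacement trick and the construction of a handlebody-preserving homotopy through the model Morse function $f_1$ — has already been carried out in the body of the section. The proof should be written as a single short paragraph that concatenates the three cited propositions.
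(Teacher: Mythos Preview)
Your proposal is correct and matches the paper's approach exactly: the paper presents this theorem as a summary that ``compile[s] the results of this section,'' with no additional proof, precisely because the bijections are the content of the earlier propositions you cite (surjectivity of $D$, Proposition~\ref{prop:MorseToComplexInjection} for injectivity, and Proposition~\ref{prop:loopsAndThinMultisections} for $T$). Your write-up is, if anything, more explicit than the paper's own treatment.
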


\section{Morse 2-functions and wrinkled fibrations}

In order to prove our correspondence between the sets in Theorem \ref{thm:firstBijections} and smooth 4-manifolds, we will pass through the theory of stable maps and wrinkled fibrations. Each thin multisection gives rise to a smooth map from the 4-manifold to the disk. We will show how to modify these maps in order to produce new multisections, and we will show that these modifications suffice to relate any two thin multisections of the same 4-manifold.

\begin{definition}
A Morse 2-function is a map $f: M^4 \to D^2$ which is smooth, stable, and generic among smooth $D^2$ valued functions. 
\end{definition}

In this paper, we will be particularly concerned with Morse 2-functions which give rise to  multisections. The following class of Morse 2-functions, which generalize the definition of a trisected Morse 2-function, are such a class of functions..\

\begin{definition}
\label{def:MultisectedMorse2}
A Morse 2-function $F:X^4 \to D^2$ is said to be \textbf{radially monotonic} if
\begin{enumerate}
    \item $F$ has a unique definite fold on the outside of the disk;
    \item Indefinite folds always have index 1 when moving towards the center of the disk;
    \item There exist $n$ radial lines separating the disk into $n$ sectors, such that each fold has at most one cusp in each sector.
\end{enumerate}

We call such a Morse 2-function, together with a collection of radial lines satisfying the last condition a \textbf{multisected Morse-2 function}. We call the inverse image of the center of the disk the \textbf{multisection surface} and will often fix an identification of this surface with our model surface $\Sigma_g$.
\end{definition}

By decomposing the 4-manifold into the pieces lying in the inverse image of each sector of the disk, a multisected Morse 2-function gives rise to a unique multisection on $X^4$. We call this multisection $\textbf{M(F)}$. Starting at the center of the disk at $\Sigma_g$ and following a radial line $r$ outwards, we pass through exactly $g$ folds of index 2 and a single fold of index 0. Thus, the manifold $F^{-1}(r)$ is a handlebody with boundary $\Sigma_g$. We call this handlebody $\mathbf{H(r)}$. If $(r_1, ... r_n)$ is the ordered collection of radial lines, then by the construction in Section \ref{sec:4mfldFromHB} the multisection structure can be recovered from the cyclic sequence of dual handlebodies $(H(r_1), H(r_2), ... , H(r_n))$ with boundary $\Sigma_g$.

A multisected Morse 2-function $F$ is said to be \textbf{thin} if each sector contains exactly 1 cusp. If a multisected Morse 2-function $F$ is thin, then the induced multisection $M(F)$ is also thin. Given a multisected Morse 2-function which is not thin, some sector will contain multiple cusps. We can create a new multisected Morse 2-function by separating these cusps into multiple sectors with an additional radial line. We can also do the opposite and put multiple cusps into the same sector by removing a line, though this is not always possible while maintaining the one cusp per fold condition of Definition \ref{def:MultisectedMorse2}. We define these operations below.

\begin{definition}
A multisected Morse 2-function $F'$ is said to be an \textbf{expansion} of $F$ if $F'$ is obtained by adding a collection of radial lines to $F$. Here we say that $F$ is a \textbf{contraction} of $F'$. 
\end{definition}

When attempting to expand a thin multisection, one simply produces regions with no cusps in them. These correspond to product regions of the manifold and contain only redundant information. In particular, these regions can be contracted into adjacent sectors without changing their topology. Due to this, we will not consider multisection maps with regions with no cusps, nor multisections which contain product regions as a sector.

In the course of modifying Morse 2-functions, we will pass through some functions which contain additional singularities, which are not smoothly stable. These are the wrinkled fibrations of \cite{Lek09}.

\begin{definition}
 A \textbf{wrinkled fibration} $W: M^4 \to D^2$ is a function which is a Morse 2-function everywhere except at a finite number of isolated points where the function has a Lefschetz singularity.
\end{definition}

\subsection{Reference paths and modifications of wrinkled fibrations}

 We will be concerned with how to import information of the singular sets of our maps onto a reference surface. This will be facilitated by the following definition.

\begin{definition}
Let $f: M^4 \to D^2$ be a wrinkled fibration, $b \subset D^2$ be a regular value, and $c$ be a critical value. A \textbf{reference path} is an arc in $D^2$ with endpoints $b$ and $c$ such that when traversing the arc from $b$ to $c$ only index-2 folds are crossed. Reference paths are considered up to homotopies supported away from cusps and Lefschetz singularities.
\end{definition}

In the definition above, the point $c$ may be a Lefschetz singularity, so long as no interior points of the arc pass through Lefschetz singularities or cusps. Given a directed reference path $p$, starting at $b$, we can draw the vanishing cycle  of a Lefschetz singularity or an index-2 fold singularity of $c$ on $f^{-1}(b) := \Sigma$ by parallel transport.  We call the resulting curve $c_{p}$. Intuitively, $c_{p}$ is the curve which contracts as we approach $c$ via the path $p$.  See \cite{BehHay16} for details on this construction.

We next seek to understand how vanishing cycles change as we modify a Morse 2-function. Following \cite{BS}, we say a local modification of critical images which takes a critical image $C_0$ and produces a critical image $C_1$ is \emph{always realizable} if there is a smooth 1-parameter family of smooth maps $F_t: X^4 \to D^2$ such that the critical image of $F_0$ is $C_0$ and the critical image of $F_1$ is $C_1$. We will need three particular always realizable moves in this paper. 

The first is the \emph{unsink} move of \cite{Lek09}, which takes folds of index 1 and index 2 meeting at a cusp, and transforms them into a fold with no cusp, together with a Lefschetz singularity. The modification of critical images, together with the resulting vanishing cycles along the reference paths drawn is given in Figure \ref{fig:unsink}. In particular if the vanishing cycle associated to first indefinite fold (ordered by increasing counterclockwise angle) is $\alpha$ and the second is $\beta$, then the resulting Lefschetz singularity has vanishing cycle $\tau_{\alpha}(\beta)$, where $\tau$ denotes a right handed Dehn twist.

\begin{figure}
    \centering
    \includegraphics[scale=.3]{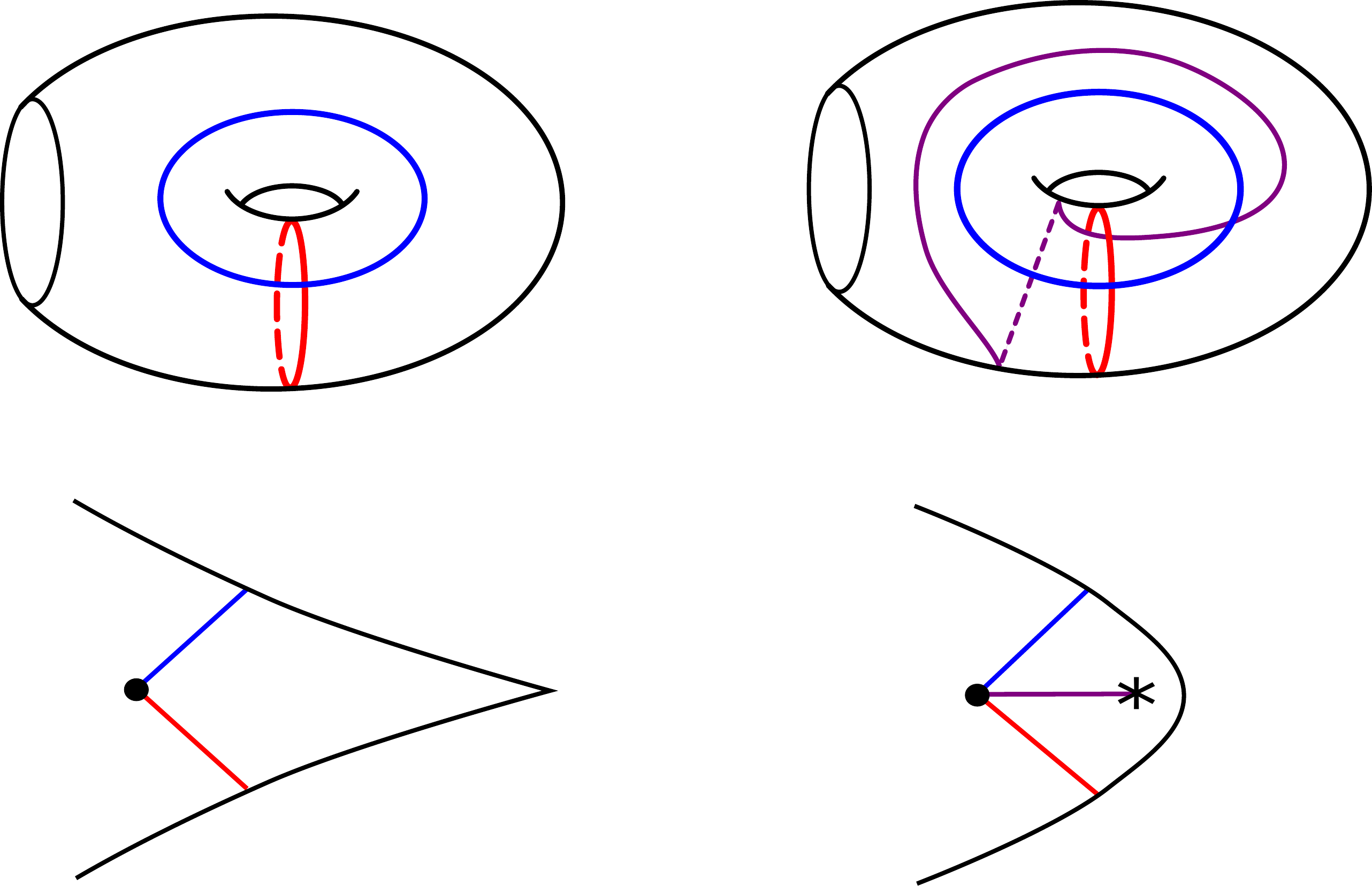}
    \caption{The resulting critical images and vanishing cycles when performing an unsink move.}
    \label{fig:unsink}
\end{figure}

The second is the \emph{push} move, which moves a Lefschetz singularity over an indefinite fold towards the side with the higher genus fiber. This modifies the critical image in a straightforward way and the vanishing cycle associated to the Lefschetz singularity is unchanged. This move was introduced by Baykur in \cite{Bay09}.

The third is the \emph{wrinkle} move of \cite{Lek09}, which transforms a Lefschetz singularity into a fold with three cusps. The modification of critical images, together with the resulting vanishing cycles along the reference paths drawn is given in Figure \ref{fig:wrinkle}. The three resulting vanishing cycles of the indefinite folds are characterized by the property that they each intersect transversely once positively and that any arc from one boundary component of the surface to the other must intersect one of the curves. We note that the effect of wrinkling on relative trisection diagrams is studied in \cite{CasOzg}.

\begin{figure}
    \centering
    \includegraphics[scale=.3]{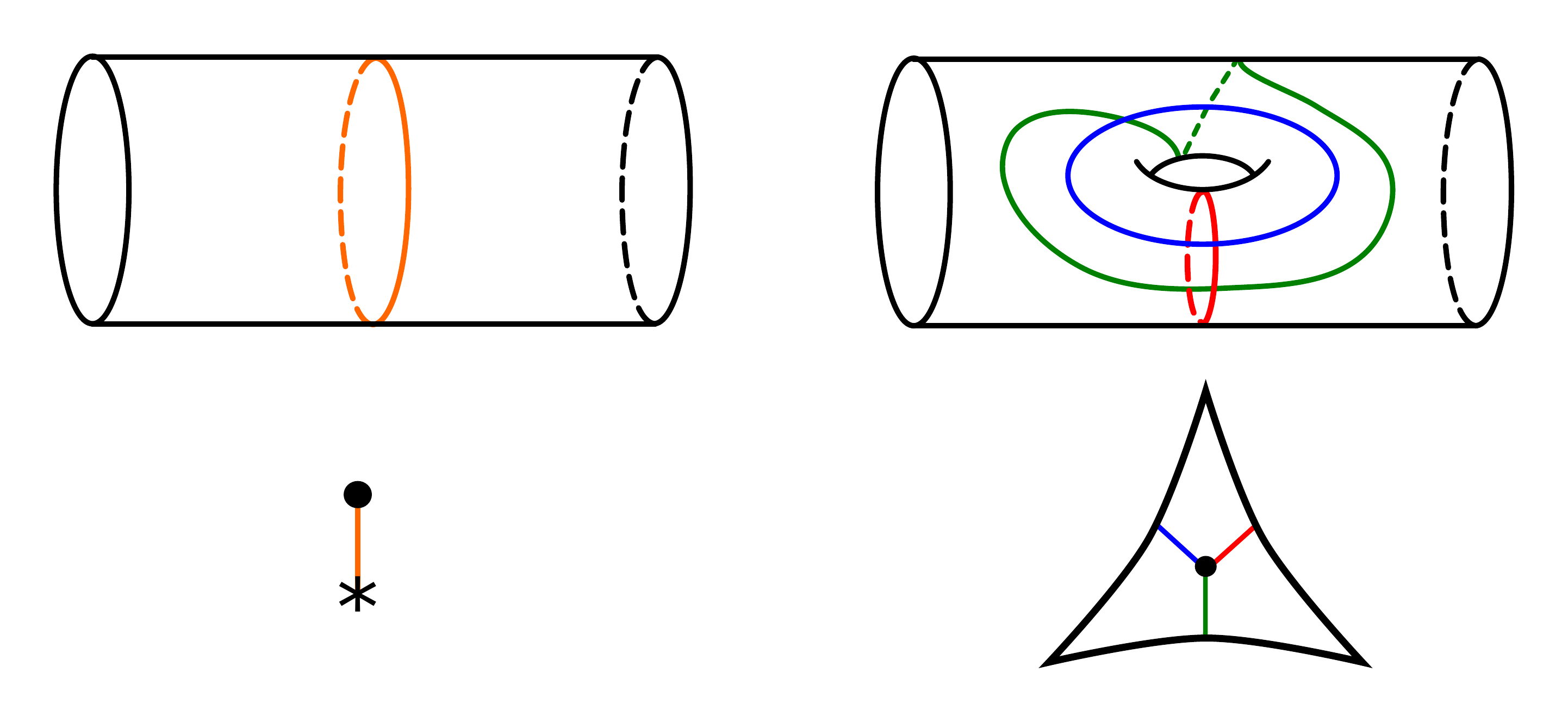}
    \caption{The resulting critical images and vanishing cycles when performing an unsink move.}
    \label{fig:wrinkle}
\end{figure}

Given a basepoint $b$, a critical point $c$, and two directed paths $p$ and $p'$ between $b$ and $c$, a homotopy from $p$ to $p'$ may cross over some cusps or Lefschetz singularities. Given some data about the Lefschetz singularities and the cusp singularities, one can deduce the relations between $c_p$ and $c_{p'}$. These changes in the vanishing cycles were explored in depth in \cite{Asa21} where this change is called a $\Delta_1$ move and \cite{Hay20} where this move was first analyzed.

\begin{lemma}(Lemma 3.2 of \cite{Hay20})
\label{lem:dragOverCusp}
Let $f: M^4 \to D^2$ be a wrinkled fibration, $b \subset D^2$ be a regular value, and $c$ be a critical value. Let $p$ and $p'$ be two paths between $b$ and $c$ related by a homotopy over a cusp as in Figure \ref{fig:referencePathsAroundCusp}. Let $c_1$, $c_2$, $c_3$, and $c_4$ by the vanishing cycles on $\Sigma_g$ of the folds in \ref{fig:referencePathsAroundCusp}. Then $c_4$ on $\Sigma_g$ is isotopic to $\tau_{\tau_{c_1}(c_2)}(c_3).$

\end{lemma}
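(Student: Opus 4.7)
The plan is to reduce the claim to the standard effect of parallel transport around a Lefschetz singularity by first \emph{unsinking} the cusp that appears in the homotopy from $p$ to $p'$. Once that is done, the two reference paths will differ only by a homotopy across a single Lefschetz value, where the monodromy change is classical.

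First, I would apply the unsink move of Figure \ref{fig:unsink} at the cusp. By the description immediately preceding Lemma \ref{lem:dragOverCusp}, if the two indefinite folds meeting at the cusp have vanishing cycles $c_1$ and $c_2$ (with $c_1$ first in the counterclockwise ordering used in that figure), then the resulting Lefschetz singularity has vanishing cycle
\[
\gamma \;:=\; \tau_{c_1}(c_2)
\]
on $\Sigma_g$. Because the unsink is an always-realizable modification, there is a $1$-parameter family of smooth maps $F_t : M^4 \to D^2$ interpolating between the original wrinkled fibration and the unsunk one, during which the critical image outside the local neighbourhood of the cusp does not move. In particular, parallel transport along any reference path that avoids the critical set throughout the deformation is unaffected, so the vanishing cycles $c_3$ and $c_4$ of the critical value $c$ along $p$ and $p'$ can be computed equally well in either picture.

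Next, I would observe that in the unsunk picture the paths $p$ and $p'$ differ only by a homotopy across the newly created Lefschetz value, while the endpoint $c$ and its vanishing cycle $c_3$ lie on the far side of what is now a smooth indefinite fold. The standard consequence of Picard--Lefschetz theory is that the geometric monodromy around a Lefschetz value of vanishing cycle $\gamma$ is the right-handed Dehn twist $\tau_\gamma$; consequently, dragging the reference path past the Lefschetz value from one side to the other transforms any subsequently transported vanishing cycle by $\tau_\gamma$. Applying this to $c_3$ gives
\[
c_4 \;=\; \tau_\gamma(c_3) \;=\; \tau_{\tau_{c_1}(c_2)}(c_3),
\]
which is the desired identity up to isotopy.

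The main obstacle I anticipate is the bookkeeping of orientations: one must check that the counterclockwise ordering of the fold arcs at the cusp in Figure \ref{fig:unsink} is consistent with the direction of the homotopy from $p$ to $p'$ in Figure \ref{fig:referencePathsAroundCusp}, and that the resulting Dehn twist at the Lefschetz point is right-handed with respect to the chosen orientations of $\Sigma_g$ and $D^2$. Once these conventions are aligned and the Lefschetz value produced by the unsink is verified to sit on the side of the fold being crossed by the homotopy, no further calculation is required.
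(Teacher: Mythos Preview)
Your proposal is correct and follows essentially the same approach as the paper: the paper simply notes that the lemma is obtained by applying an unsink to the cusp to produce a Lefschetz singularity with vanishing cycle $\tau_{c_1}(c_2)$ and then invoking the well known monodromy of that Lefschetz singularity. Your write-up expands this sketch with the appropriate care about reference paths and orientation conventions, but the underlying argument is identical.
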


\begin{figure}
    \centering
    \includegraphics[scale=.4]{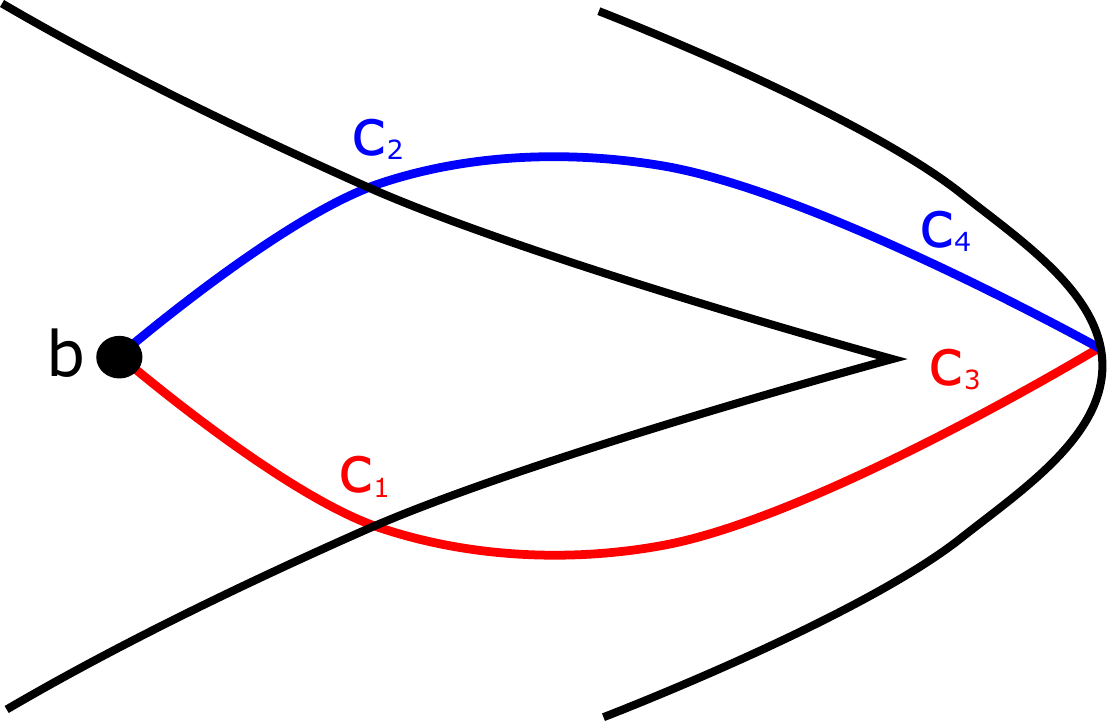}
    \caption{When passing a reference path over a cusp, the vanishing cycle $c_4$ on $\Sigma_g$ is isotopic to $\tau_{\tau_{c_1}(c_2)}(c_4).$}
    \label{fig:referencePathsAroundCusp}
\end{figure}

We note that this lemma is obtained by applying an unsink to the cusp to obtain a Lefschetz singularity with vanishing cycle $\tau_{c_1}(c_2)$ and using the well known monodromy of this Lefschetz singularity. 

\begin{figure}
    \centering
    \includegraphics[scale=.4]{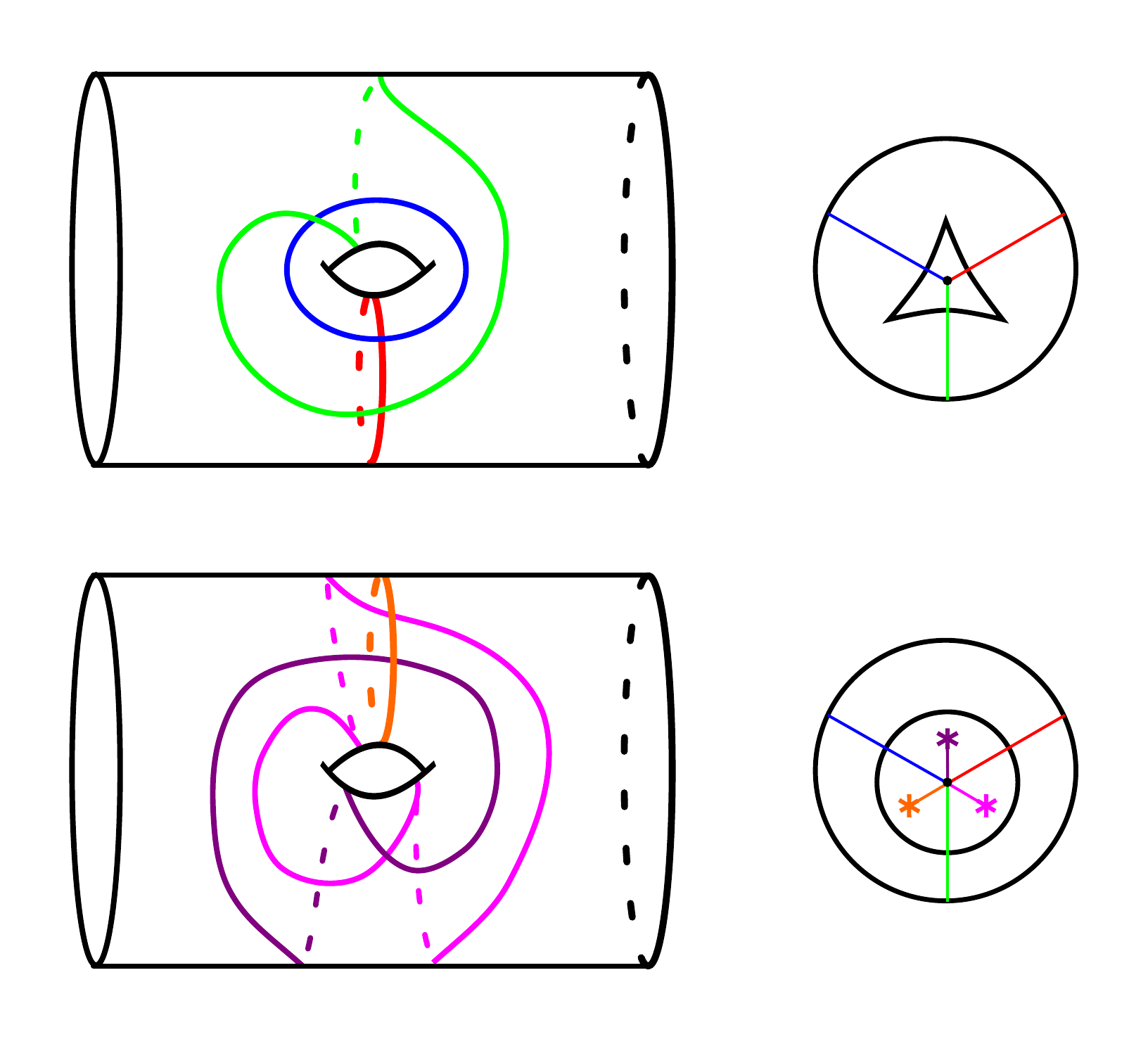}
    \caption{Top: The vanishing cycles of a tricuspidal curve resulting from wrinkling a Lefschetz singularity. Bottom: The vanishing cycles of the Lefschetz singularities obtained after unsinking the cusps.}
    \label{fig:twistCurves}
\end{figure}

\subsection{UPW move on diagrams}

In \cite{IN20} we define a composition of the unsink, push, and wrinkle moves which we call a UPW move, pictured in Figure \ref{fig:UPWWithPaths}. In this paper, we will additionally track reference paths in order to obtain a new multisection diagram. This will allow us to extract information about how the 3-dimensional handlebodies change under this operation in order to give corresponding moves in the dual handlebody complex. 

\begin{definition}
Let $F: X^4 \to D^2$ be a radially monotonic Morse 2-function, and let $c \subset D^2$ be a cusp value. We say that $F'$ is obtained by a \textbf{UPW move} on $c$ from $F$ if $F'$ is obtained from $F$ by unsinking the cusp $c$, dragging the resulting Lefschetz singularity towards the center of the disk until it has passed all other folds, and wrinkling the Lefschetz singularity. If $c$ lied in the sector $X_i$ of the corresponding multisection, then the cusps are distributed to the sectors $X_{i+1}$, $X_{i+2}$, and $X_{i+3}$.
\end{definition}

We next seek to describe the genus $g+1$ multisection diagram obtained after performing a UPW move on a genus $g$ multisection. This move begins by locating a cusp in the Morse 2-function, which corresponds to two curves $c_i$ and $c_{i+1}$ which intersect once on the corresponding multisection diagram. These two curves, together with the Lefschetz singularity obtained by unsinking the cusp lie in the punctured torus neighbourhood of $c_i \cup c_{i+1}$ as shown on the right of Figure \ref{fig:unsink}. We will only modify the diagram in this neighbourhood, as described in the following proposition.

\begin{proposition}
\label{prop:CurveEffect}
Let $D$ be a genus $g$ multisection diagram of a 4-manifold $X$ comprised of the ordered cut systems $(C_1, C_2, ..., C_n)$ on $\Sigma_g$ with the cut system $C_k$ being made up of the curves ${c_k^1, ... , c_k^g}$. Let $c_{i} \in C_{i}$ and $c_{i+1} \in C_{i+1}$ be two curves which intersect geometrically once. Then we may obtain a new multisection diagram $(D_1', D_2', ..., D_n')$ of $X$ by replacing the annular neighbourhood of $t_{c_i}(c_{i+1})$ in $D$ as in Figure \ref{fig:wrinklingAnnulus} and adding one new curve to each cut system as pictured. All curves which do not intersect $t_{c_i}(c_{i+1})$ are left unchanged.
\end{proposition}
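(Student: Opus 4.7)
The approach is to realize the claimed change of cut systems as the diagrammatic effect of a UPW move on a radially monotonic Morse 2-function inducing the multisection $D$.

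First I would fix a radially monotonic Morse 2-function $F:X^4 \to D^2$ whose associated multisection is exactly $D$. The hypothesis that $c_i \in C_i$ and $c_{i+1} \in C_{i+1}$ intersect once means that the corresponding adjacent index-2 folds of $F$ meet at a cusp $c$ on the radial line between sectors $X_i$ and $X_{i+1}$, since a single geometric intersection of consecutive vanishing cycles is exactly the cusp condition visible in Figure \ref{fig:unsink}.

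Next I would perform a UPW move at $c$. After the unsink, the resulting Lefschetz singularity has vanishing cycle $\tau_{c_i}(c_{i+1})$ on $\Sigma_g$ by the formula recalled immediately below Figure \ref{fig:unsink}. Pushing this Lefschetz point radially inward past all remaining index-2 folds preserves its vanishing cycle on the central fiber, and the wrinkle then replaces the Lefschetz singularity by a small closed indefinite fold encircling the center and carrying three cusps, distributed across sectors $X_{i+1}$, $X_{i+2}$, and $X_{i+3}$. Because the new fold is indefinite of index $1$ going inward, the new multisection surface has genus $g+1$; concretely, a tubular annular neighborhood of $\tau_{c_i}(c_{i+1})$ in $\Sigma_g$ is replaced by a twice-punctured torus to form $\Sigma_{g+1}$, which is precisely the local replacement depicted in Figure \ref{fig:wrinklingAnnulus}.

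It remains to identify the new cut systems on $\Sigma_{g+1}$. Any old vanishing cycle disjoint from $\tau_{c_i}(c_{i+1})$ admits a canonical isotopy into the complement of the replaced annulus, so these curves are unchanged, which accounts for the last sentence of the proposition. Every sector then picks up exactly one new cut-system curve, namely the vanishing cycle of the unique arc of the new circular fold in that sector. In the three sectors $X_{i+1}$, $X_{i+2}$, $X_{i+3}$ containing the cusps, these are directly the wrinkle vanishing cycles of Figure \ref{fig:wrinkle}, which pairwise intersect once and together fill the twice-punctured torus. In each of the remaining sectors, the new curve is obtained by parallel-transporting one of these three vanishing cycles around the new fold and applying Lemma \ref{lem:dragOverCusp} whenever the reference arc is swept across one of the three cusps.

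The main obstacle is this last parallel-transport calculation: one must check that the iterated Dehn-twist expression produced by Lemma \ref{lem:dragOverCusp} yields exactly the curve drawn in the appropriate annular region of Figure \ref{fig:wrinklingAnnulus} in each of the non-cusp sectors. Since the three wrinkle vanishing cycles are characterized up to isotopy by their intersection pattern inside the twice-punctured torus neighborhood (as noted after Figure \ref{fig:wrinkle}), this reduces to a finite computation in the mapping class group of $S_{1,2}$ that can be verified by inspection of Figure \ref{fig:twistCurves}.
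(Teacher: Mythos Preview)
Your overall strategy---realize the diagram change as a UPW move on an associated multisected Morse 2-function and then read off vanishing cycles---is exactly what the paper does. The setup, the identification of the Lefschetz vanishing cycle as $\tau_{c_i}(c_{i+1})$, and the replacement of an annular neighborhood by a twice-punctured torus are all correct and match the paper.

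However, there is a genuine gap in your identification of the new cut systems, and a misplacement of where Lemma~\ref{lem:dragOverCusp} is actually needed. You handle old curves \emph{disjoint} from $\tau_{c_i}(c_{i+1})$ and the one \emph{new} curve added to each cut system, but you never say what happens to the old curves that \emph{do} intersect $\tau_{c_i}(c_{i+1})$. Those curves become arcs crossing the annulus, and when the annulus is replaced by $S_{1,2}$ there is no canonical lift: an arc can be pushed ``over'' or ``under'' the new handle, and the correct choice depends on the reference path used for that cut system. The content of Figure~\ref{fig:wrinklingAnnulus} is precisely that the arcs of $C_{i+3}$ are routed differently from all the others; this asymmetry is what you have not accounted for.

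Relatedly, your use of Lemma~\ref{lem:dragOverCusp} is aimed at the wrong target. The three cusps of the new tricuspidal fold sit in sectors $X_{i+1},X_{i+2},X_{i+3}$, so the long arc of that fold is crossed by \emph{every} radial line except $r_{i+2}$ and $r_{i+3}$. Hence in all the ``remaining'' sectors the new curve is simply the same wrinkle vanishing cycle, with no cusp-crossing and no twisting required; your proposed parallel-transport computation there is vacuous. In the paper's argument, Lemma~\ref{lem:dragOverCusp} enters for a different purpose: one first takes non-radial reference paths that all pass through a single arc of the new fold (so all old arcs lift uniformly, as in Figure~\ref{fig:initialAnnulus}), and then straightens the reference paths for $C_{i+2}$ and $C_{i+3}$ by dragging them across cusps. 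It is this dragging that twists the \emph{old} curves of $C_{i+3}$ by the curve in Figure~\ref{fig:twistCurves}, producing exactly the green arcs of Figure~\ref{fig:wrinklingAnnulus}. That step is the core of the proof, and it is missing from your outline.
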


\begin{proof}

Given a multisection diagram, we may construct an associated multisected Morse 2-function where the curves $c_i$ and $c_{i+1}$ are two curves coming from folds which meet in a cusp. Performing a UPW move on this cusp produces a new Morse 2-function with an inner fiber a surface of genus $g+1$. We first choose an identification of this surface, $\Sigma_{g+1}$, with our original multisection surface $\Sigma_g$. To do this, we take a path crossing through one of the indefinite folds which gives a diffeomorphism between the surface $\Sigma_{g+1}$ surgered along a curve $c$ and the surface $\Sigma_g$. This gives rise to reference paths as in the bottom left of Figure \ref{fig:UPWWithPaths}. We may choose this curve so that it is precisely the red curve in Figure \ref{fig:wrinkle} as viewed on $\Sigma_{g+1}$. In beginning to construct our new cut systems, we set ${d_l^{g+1}}  = c$ for $l \not \in \{i+2, i+3 \}$. We set ${d_{i+2}^{g+1}}$ to be the blue curve in Figure \ref{fig:wrinkle} and ${d_{i+3}^{g+1}}$ to be the green curve in Figure \ref{fig:wrinkle}.

Following these reference paths from $\Sigma_{g+1}$, all of the curves on $\Sigma_g$ coming from the indefinite fold singularities are determined on $\Sigma_{g+1}$ up to handle slides over $c$. Our chosen identification of $\Sigma_g$ as $\Sigma_{g+1}$ surgered along $c$ also requires the images of the curves in $\Sigma_{g+1}$ to be disjoint from $c$. Then we may choose the initial image of the curves under these reference paths as in Figure \ref{fig:initialAnnulus} where all of the curves move across the top of the new genus, except for the curves in $C_{i+3}$ which are made to avoid the new curve ${d_{i+3}^{g+1}}$ which is to be added to $C_{i+3}$.

In order to get the radial reference paths for a multisected Morse 2-function, must modify the reference paths for the handlebodies $C_{i+2}$ and $C_{i+3}$ by dragging them over cusps as in the transition between the bottom two configurations of Figure \ref{fig:UPWWithPaths}. Lemma \ref{lem:dragOverCusp} allows us to track how the curves change under this procedure. When dragging the curves of $C_{i+2}$ and $C_{i+3}$ over the first cusp we must twist these curves over the purple curve in Figure \ref{fig:twistCurves}. Since the curves of $C_{i+2}$ are disjoint from this curve, they are left unaffected by this process. On the other hand, the portions of the curves corresponding to the cut system for $C_{i+3}$ change from the green arcs in Figure \ref{fig:initialAnnulus} to the green arcs in Figure \ref{fig:wrinklingAnnulus}. We must then further twist the curves of $C_{i+3}$ over the orange curve in Figure \ref{fig:twistCurves}, however the curves are disjoint, so that we have already completed the process for obtaining the cut system $D$. 

\end{proof}

\begin{figure}
    \centering
    \includegraphics[scale =.35]{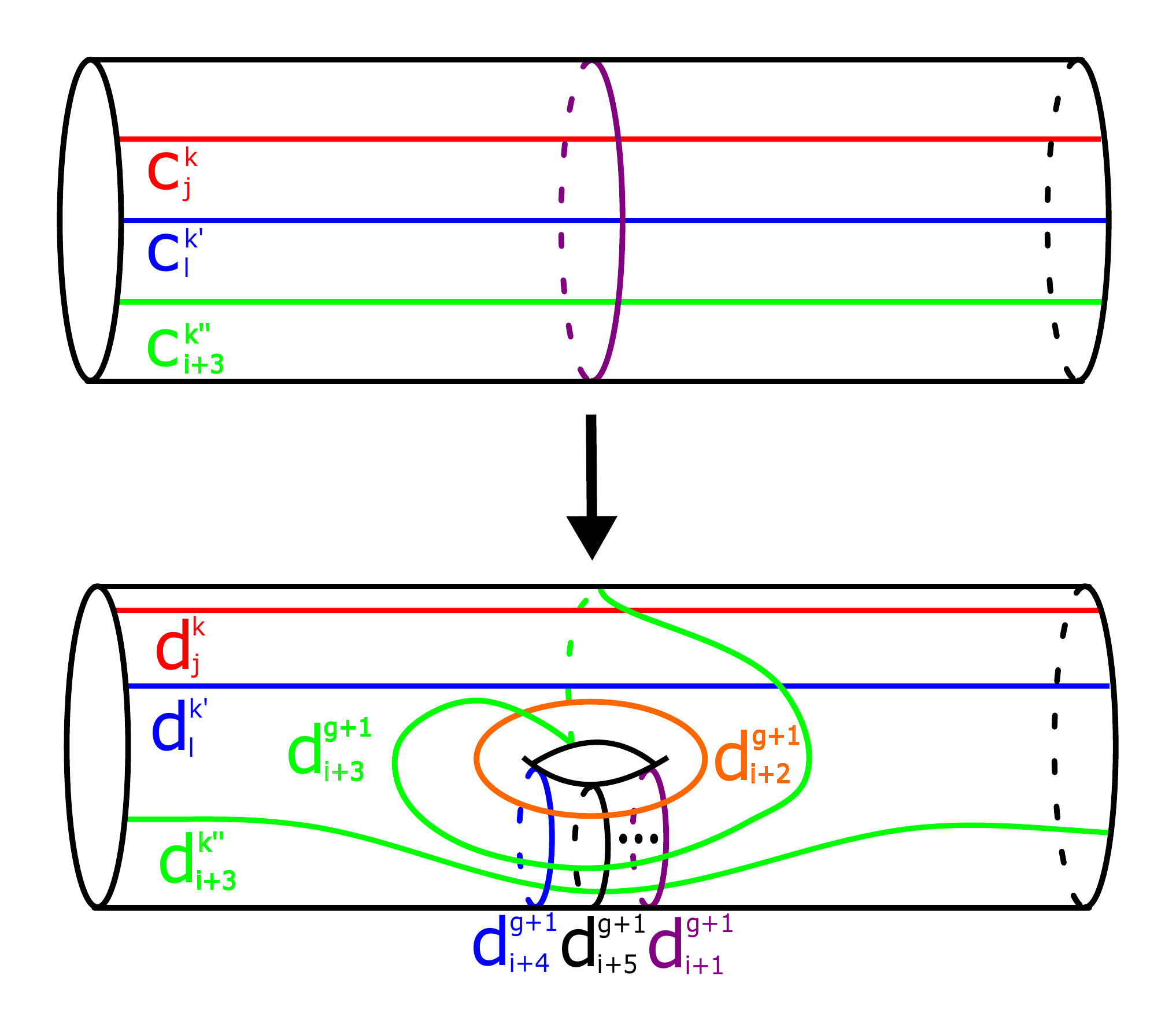}
    \caption{The curves on a multisection diagram after wrinkling a Lefschetz singularity whose vanishing cycle is the purple curve above and redrawing reference paths. All curves in the cut system $C_k$ for $k\neq i+3$ are sent ``over" the new genus, whereas any curves in the cut system $C_{i+3}$ are sent ``below" the new genus.}
    \label{fig:wrinklingAnnulus}
\end{figure}

\begin{figure}
    \centering
    \includegraphics[scale=.3]{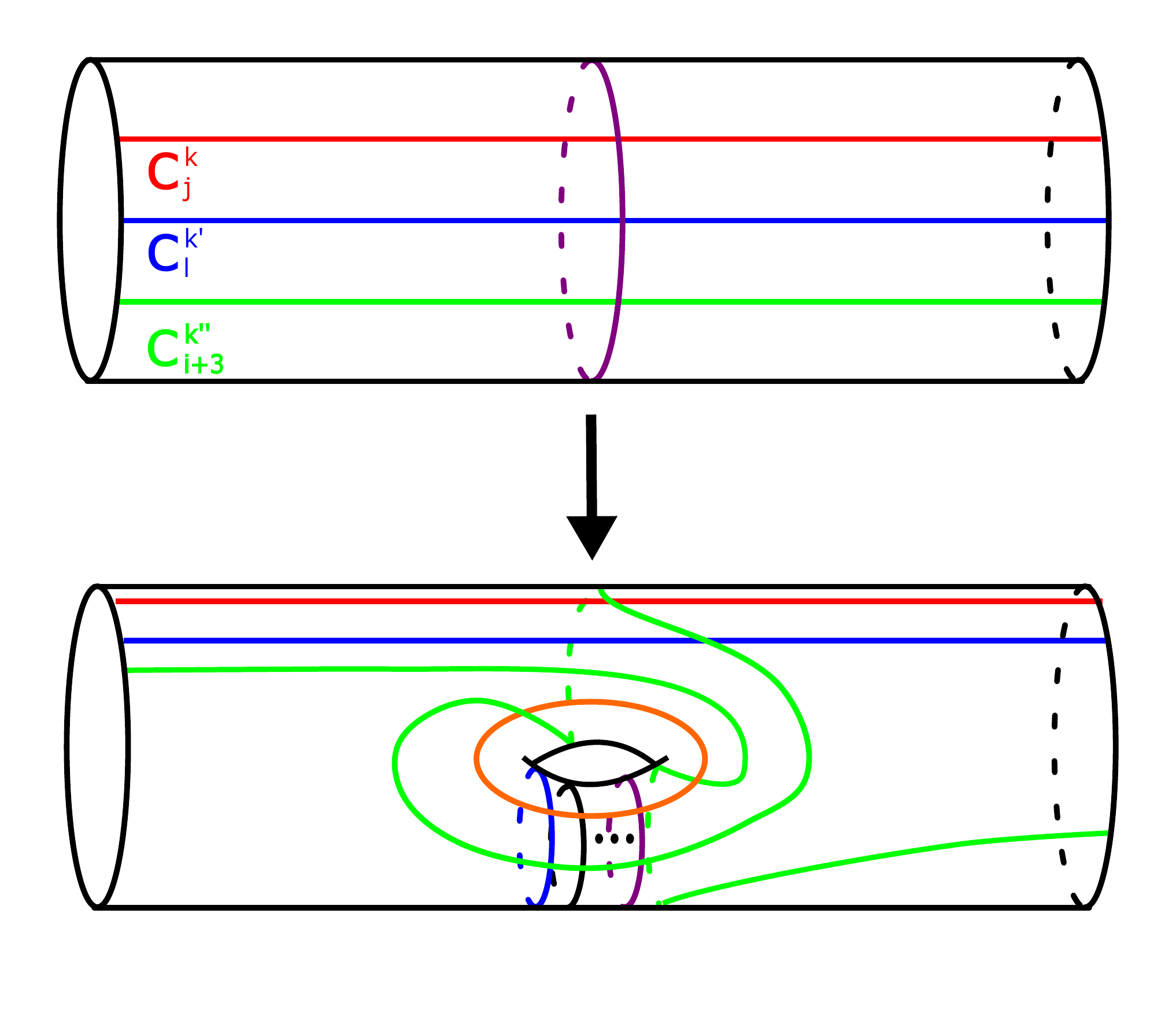}
    \caption{The initial configuration of curves in the proof of Proposition \ref{prop:CurveEffect}. These curves will not in general give a multisection diagram as the reference paths need to be corrected. }
    \label{fig:initialAnnulus}
\end{figure}

\begin{figure}
    \centering
    \includegraphics[scale = .35]{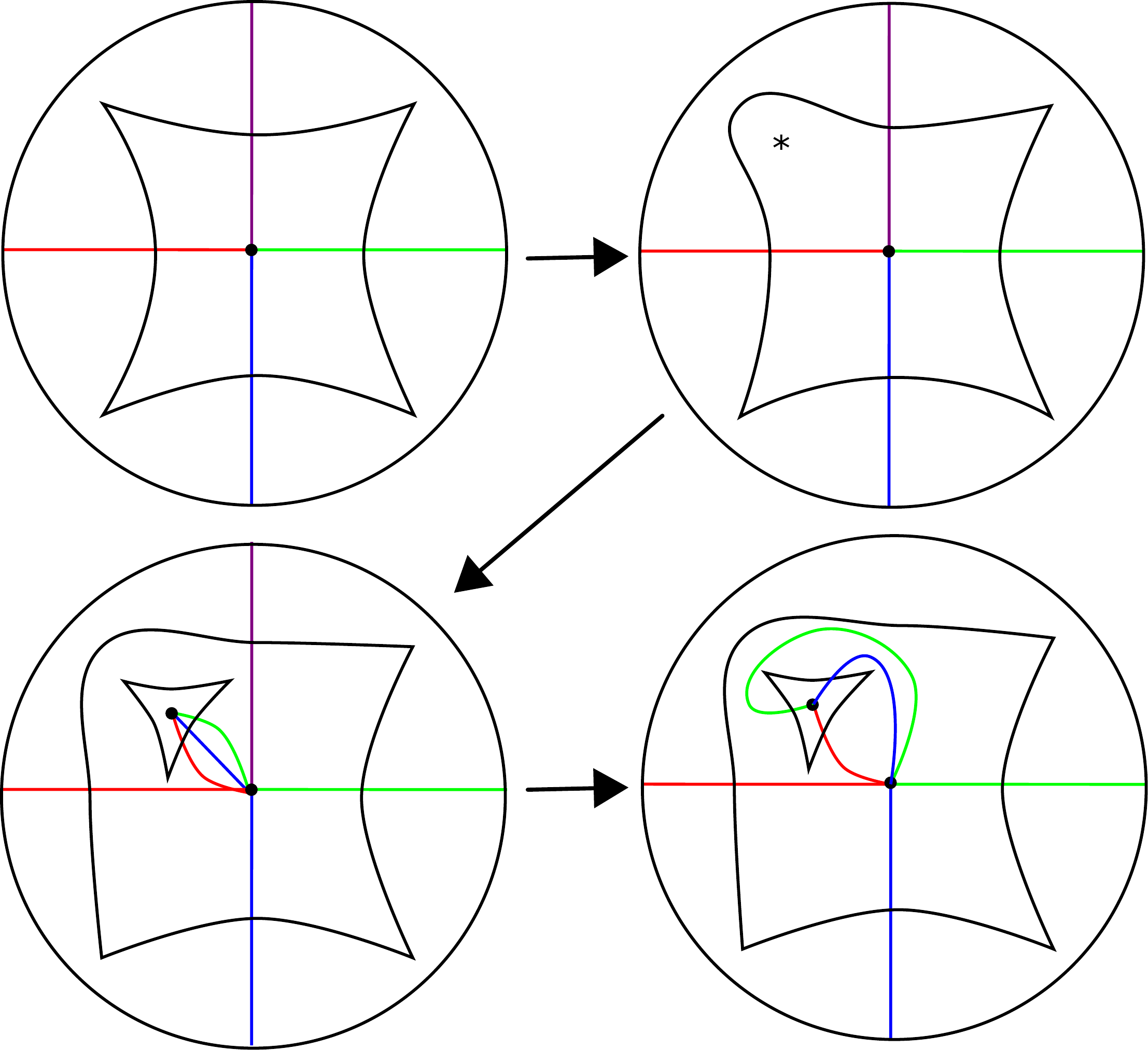}
    \caption{A UPW move on a 4-section along with steps for redrawing reference paths for the resulting trisection.}
    \label{fig:UPWWithPaths}
\end{figure}

\section{Stabilization and second bijections}
\label{sec:Stablization}

In this section, we seek to prove a bijection between the set of smooth, orientable, compact 4-manifolds up to diffeomorphism and the sets in Theorem \ref{thm:firstBijections}, modulo some additional relations including a UPW move. Our strategy is to first use a UPW move to turn any multisection into a trisection and then to show that a UPW move on inner tricusps realizes the trisection stabilization move of \cite{GK}. The first part was shown in Proposition 8.4 of \cite{IN20}. In particular, we show the following.

\begin{proposition} (\cite{IN20} Proposition 8.4)
\label{prop:decreaseSectors}
Let $n>3$, and suppose a 4-manifold $X$ admits an $n$-section of genus $g$ with sectors $X_1,\dots,X_n$. Then, $X$ admits an $(n-1)$-section of genus $(2g-k_1)$, where $X_1\cong \natural^{k_1}S^1\times B_3$ obtained by a UPW move.
\end{proposition}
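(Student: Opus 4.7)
The plan is to strip cusps from the sector $X_1$ one at a time using UPW moves, then contract the emptied sector into a neighbor. My first step would be to quantify the obstruction: since $X_1 \cong \natural^{k_1} S^1 \times B^3$, its boundary $\partial X_1 = \#^{k_1} S^1 \times S^2$ inherits the genus-$g$ Heegaard splitting $H_{n,1} \cup_{\Sigma_g} H_{1,2}$, which is a $(g - k_1)$-fold stabilization of the minimal-genus splitting. In the language of a multisected Morse 2-function this is manifested as exactly $g - k_1$ cusps lying inside the sector $X_1$, each corresponding to a cancelling pair of folds in the stabilization.

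Next, I would apply a UPW move to each cusp of $X_1$ in turn. By the definition of the UPW move, the operated cusp is removed from $X_1$ and three new cusps are distributed among $X_2, X_3, X_4$. The hypothesis $n > 3$ ensures these three indices are all distinct from $1$, so the new cusps never fall back into $X_1$, and each UPW strictly decreases the cusp count inside $X_1$ by one. Simultaneously, the wrinkle step of each UPW raises the genus of the central fiber by one. Performing exactly $g - k_1$ UPW moves therefore eliminates all cusps from $X_1$ while raising the genus from $g$ to $g + (g - k_1) = 2g - k_1$.

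Once $X_1$ is cusp-free, the two bounding handlebodies $H_{n,1}$ and $H_{1,2}$, read off along the radial arcs on either side of $X_1$ in the new genus-$(2g - k_1)$ surface, are obtained from one another by handle slides only, so they coincide as handlebodies $H$. Hence $X_1$ has the product structure $H \times I$. I would then delete the radial arc separating $X_1$ from its neighbor $X_2$, absorbing $X_1$ into $X_2$. The resulting smooth map is still radially monotonic, and the condition that each fold carries at most one cusp per sector is preserved because $X_1$ contributed none. The outcome is an $(n-1)$-section of $X$ of genus $2g - k_1$.

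The main obstacle I anticipate is the bookkeeping for sector topology throughout the procedure: one must verify that each UPW move truly decreases the cusp count of $X_1$ by one without altering $k_1$ or creating additional cusps in $X_1$. The local model from Proposition \ref{prop:CurveEffect} and Figure \ref{fig:wrinklingAnnulus} should reduce this to checking that the three new cusps land in the three other sectors exactly as claimed. The secondary subtlety is the merging step: arguing that a cusp-free sector really is a product region whose absorption neither violates the axioms of Definition \ref{def:multisection} nor changes the ambient 4-manifold, likely using a handle-slide argument to identify $H_{n,1}$ with $H_{1,2}$ explicitly.
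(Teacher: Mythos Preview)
The paper does not actually supply a proof of this proposition; it is quoted verbatim from \cite{IN20} (Proposition~8.4 there) and used as a black box. So there is no in-paper argument to compare against.

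That said, your outline is the expected one and is consistent with how the paper uses the result and with its surrounding discussion. The count of $g-k_1$ cusps in $X_1$ is exactly the Waldhausen stabilization count, each UPW move removes one such cusp while depositing its three new cusps in $X_2,X_3,X_4$ (which is where $n>3$ is used), and the genus rises by one per move, landing at $2g-k_1$. Your final contraction step is precisely what the paper sanctions in the paragraph following the definition of expansion/contraction: a cusp-free sector is a product region and may be absorbed into a neighbor. One small point worth tightening: your claim that the two radial handlebodies bounding the emptied $X_1$ agree follows directly from the absence of cusps---sweeping a radial reference arc across a cusp-free sector never invokes Lemma~\ref{lem:dragOverCusp}, so the cut systems are literally unchanged, not merely slide-equivalent. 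With that, the new $X_1$ is genuinely $H\times I$ and the contraction is legitimate.
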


Repeatedly applying this operation to a multisection reduces the number of sectors until we are left with only three. An example of this process is shown in Figure \ref{fig:S2xS2QuadtoTri} where a 4-section of $S^2 \times S^2$ is made into the unique genus 2 trisection of $S^2 \times S^2.$ Figure \ref{fig:S2xS2slidesToStandard} shows a sequence of transformations turning the resulting trisection diagram into the familiar trisection diagram for $S^2 \times S^2$. 

\begin{figure}
    \centering
    \includegraphics[scale=.4]{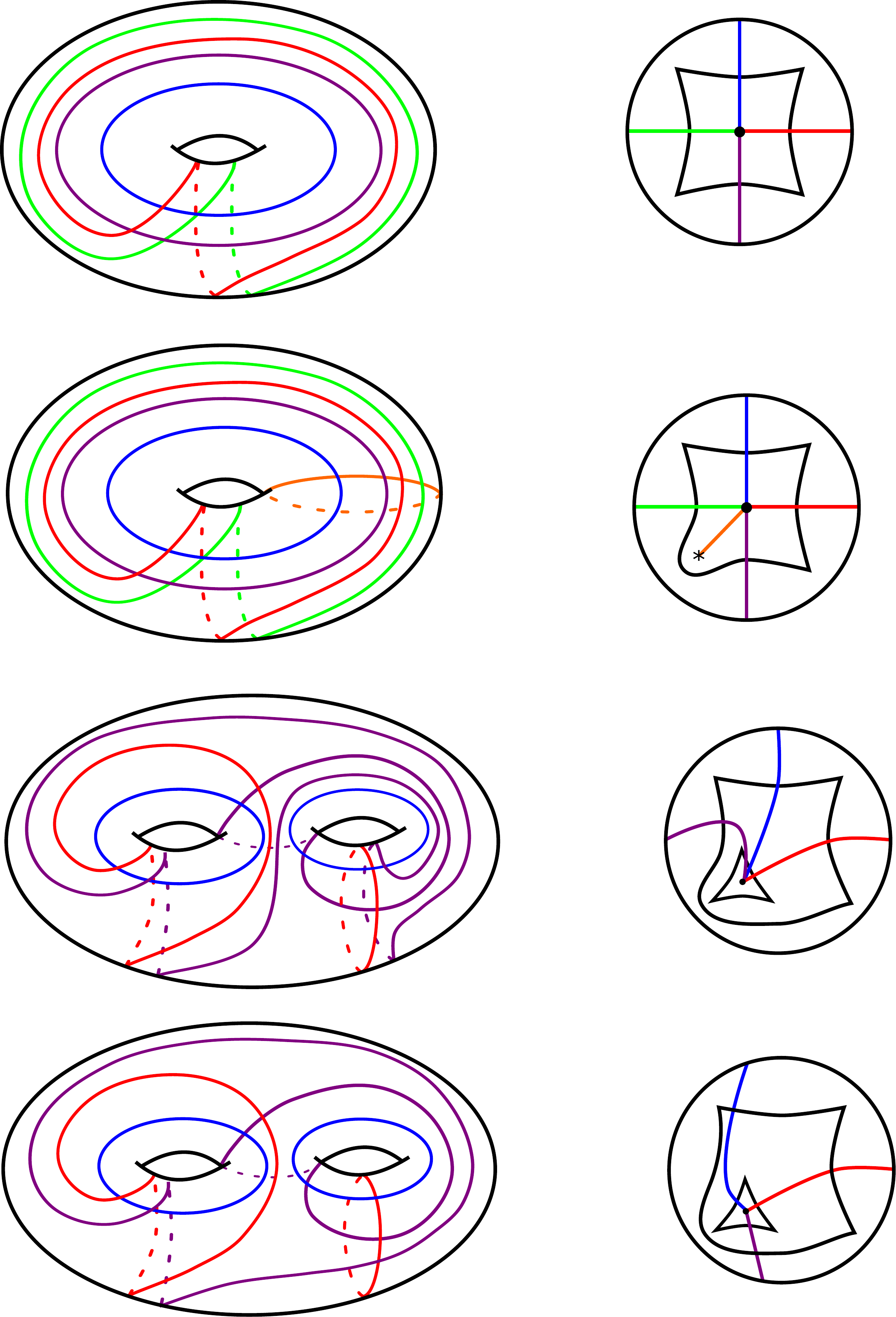}
    \caption{Performing a UPW move on a 4-section of $S^2 \times S^2$. Critical images of the corresponding wrinkled fibrations are pictured to the right. The final picture is a trisection diagram for of $S^2 \times S^2$ which is transformed into the usual picture in Figure \ref{fig:S2xS2slidesToStandard}}.
    \label{fig:S2xS2QuadtoTri}
\end{figure}

\begin{figure}
    \centering
    \includegraphics[scale=.3]{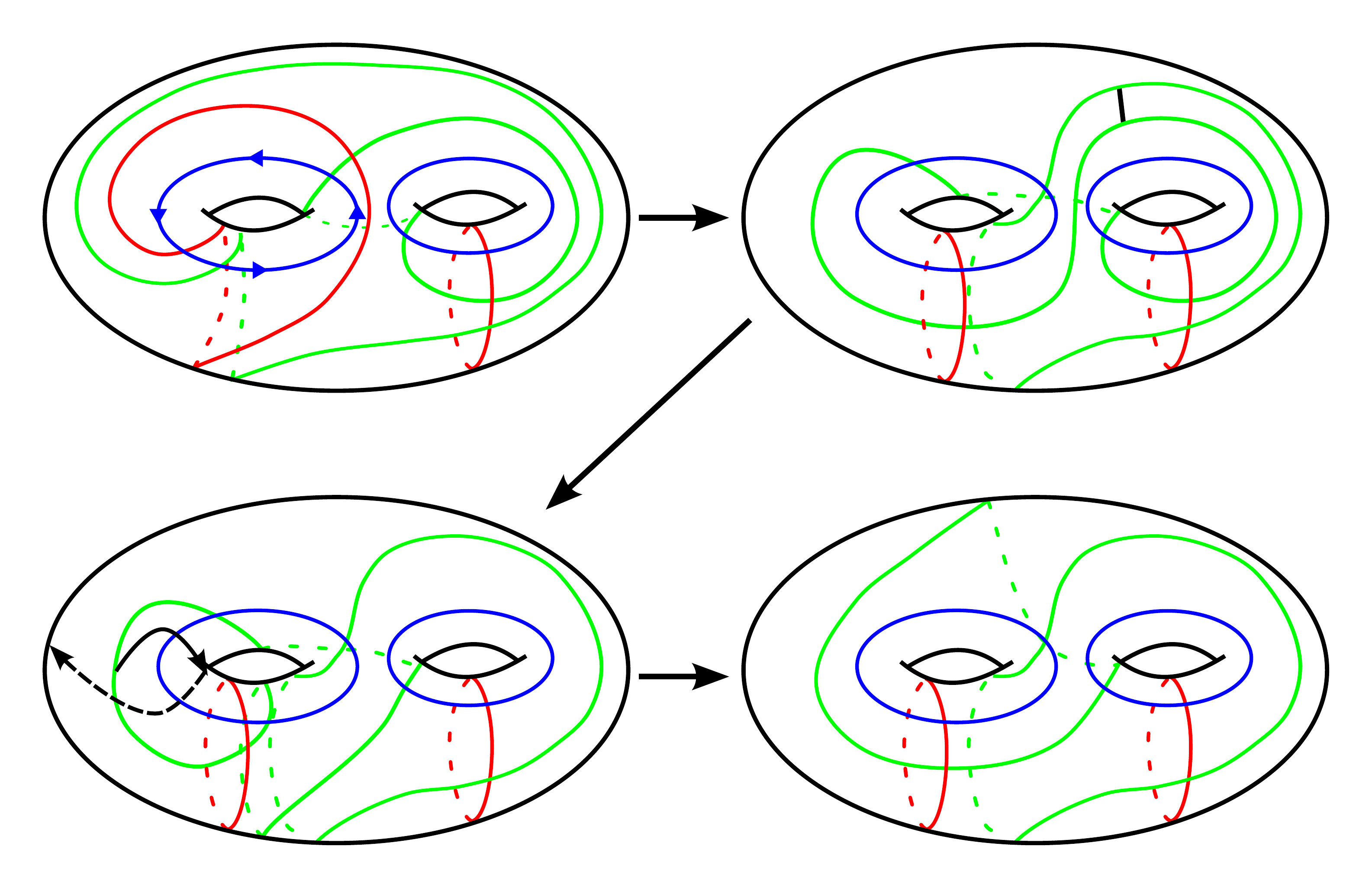}
    \caption{A sequence of operations transforming the trisection diagram of $S^2 \times S^2$ in Figure \ref{fig:S2xS2QuadtoTri} to the usual picture. The purple curves in the previous figure are colored green here to agree with trisection conventions. We first twist about the blue curve in the first picture. A handle slide of the green curves along the band pictured produces the diagram in the bottom left. The resulting green curve can be isotoped into the picture in the bottom right.}
    \label{fig:S2xS2slidesToStandard}
\end{figure}

In \cite{MSZ} the authors provide a generalization of the trisection stabilization operation called an unbalanced stabilization. This operation breaks the trisection stabilization operation into three sub-operations called $1-$ $2-$ or $3-$ stabilization. These operations are characterized by finding two parallel curves in different handlebodies which intersect a curve in the third handlebody exactly once. Here, we show that performing a UPW move on the cusp in the $i^{th}$ sector of an inner tricusp produces an $i-$stabilization. Performing this three times gives us the desired trisection stabilization operation.

\begin{proposition}
\label{prop:UPWisStab}
Let $F:X^4 \to D^2$ be a trisected Morse 2-function whose innermost fold circle is embedded and contains exactly three cusps. Let $M(F) = X_1 \cup X_2 \cup X_3$ be the trisection corresponding to $F$. Then performing a UPW move on the cusp lying in $X_i$ corresponds to an $i-$stabilization.
\end{proposition}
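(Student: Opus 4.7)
The plan is to apply Proposition \ref{prop:CurveEffect} directly to a cusp on the inner tricusp and verify by inspection that the resulting genus $g+1$ trisection diagram matches the diagrammatic characterization of an $i$-stabilization from \cite{MSZ}. No new moves need to be introduced beyond those already developed; the argument is essentially pictorial.

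First, I identify the two vanishing cycles meeting at the cusp in $X_i$. Because the inner tricusp is embedded with exactly one cusp per sector, the cusp in $X_i$ is the common endpoint of the two indefinite folds bordering this sector. By the construction of a multisection diagram from a trisected Morse 2-function described in Section \ref{sec:4mfldFromHB}, its two adjacent vanishing cycles are a curve $\alpha$ in the cut system of $H_{i-1,i}$ and a curve $\beta$ in the cut system of $H_{i,i+1}$, and they intersect geometrically once on $\Sigma_g$ because they arise from a cusp.

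Next, I apply Proposition \ref{prop:CurveEffect} to the pair $(\alpha,\beta)$. This yields a genus $g+1$ trisection diagram obtained by the purely local modification shown in Figure \ref{fig:wrinklingAnnulus}: each of the three cut systems gains exactly one new curve inside the annular neighbourhood, and every preexisting curve outside this neighbourhood is unchanged and can be isotoped off the new handle. The decisive point, read directly off the figure, is that two of the three new curves are isotopic to a longitude of the new handle, while the third is a meridian of the new handle intersecting each of the other two transversely once. This is exactly the local model of an unbalanced stabilization in the sense of \cite{MSZ}: two parallel longitudinal curves in two of the three handlebodies, met once by a single meridional curve in the third handlebody.

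All that remains is to identify which of the three types of unbalanced stabilization we have produced. Proposition \ref{prop:CurveEffect} singles out the cut system $C_{i+3}$ as the one whose new curve is sent ``below'' the new handle and is therefore the meridian. In the cyclic $n=3$ case a short mod-$3$ index check, comparing the sector-to-handlebody correspondence of Section \ref{sec:constructingMultisections} with the naming convention for $i$-stabilizations in \cite{MSZ}, shows that this is precisely the handlebody distinguished by the $i$-stabilization. The main obstacle is exactly this last piece of bookkeeping: reconciling the index ``$i$'' that labels the sector of the UPW cusp with the index ``$i$'' labeling the distinguished handlebody in \cite{MSZ}. Once the conventions are aligned, the proposition follows immediately from Proposition \ref{prop:CurveEffect} together with direct inspection of Figure \ref{fig:wrinklingAnnulus}.
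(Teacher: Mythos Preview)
Your proposal contains a genuine gap. The three new curves that Proposition~\ref{prop:CurveEffect} adds are the three vanishing cycles coming from the wrinkled tricusp, and (as recorded in the description of Figure~\ref{fig:wrinkle}) these pairwise intersect transversely once. In particular no two of them are isotopic, so the configuration you describe---two parallel longitudes in two handlebodies together with a meridian in the third---is not what appears after applying Proposition~\ref{prop:CurveEffect}. For $n=3$ the assignment in the proof of that proposition reads $d_{i+1}^{g+1}=c$, $d_{i+2}^{g+1}=\text{blue}$, $d_{i+3}^{g+1}=\text{green}$: three mutually transverse curves on the new handle, not two parallel ones plus a dual. The sentence ``sent below the new genus'' in the caption of Figure~\ref{fig:wrinklingAnnulus} refers to how the \emph{old} arcs of $C_{i+3}$ are routed, not to the new curve in that cut system.

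More seriously, you are tracking only the three new curves and ignoring the three \emph{old} vanishing cycles of the inner tricusp, which persist in the genus $g+1$ diagram and interact with the new handle. The paper works with all six curves in the relevant subsurface (Figure~\ref{fig:UPWonInnerTricusp}) and then carries out a nontrivial sequence of handle slides and isotopies (Figure~\ref{fig:sildesToStabilization}) before a parallel pair meeting a dual curve once becomes visible. Even then one is not finished: locating a stabilization summand only shows the result is a stabilization of \emph{some} trisection, and the paper separately checks (Figure~\ref{fig:remainingCurves}) that the three curves remaining after destabilizing lie in the same homeomorphism type, rel boundary, as the original inner-tricusp configuration---so that it is indeed an $i$-stabilization of the trisection we started with. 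Your argument omits the slides, this destabilization check, and also the preliminary case split on the two orientation types of the inner tricusp.
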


\begin{proof}
The three vanishing cycles on the inner fold each intersect pairwise once and fill up a thrice punctured torus. The curves separate the surface into three pieces each containing a single boundary component. There are two cases for the configuration of these three vanishing cycles up to orientation preserving homeomorphism rel boundary which are shown in Figure 6 of \cite{Hay20}. These are characterized by whether the curves can be oriented to pairwise intersect positively once. We will treat the case in which these curves intersect positively once, with the other case being identical after composing with an orientation reversing homeomorphism.

In this case, the curves corresponding to the vanishing cycles are shown on the left of Figure \ref{fig:UPWonInnerTricusp}. Performing a UPW move between the red and the blue curve yields the curves on the right of Figure \ref{fig:UPWonInnerTricusp}. Here, the curves are drawn in slightly different shades only for visual clarity. 

In Figure \ref{fig:sildesToStabilization} we perform a sequence of slides and isotopies on the resulting curves. At the end of this sequence we find a parallel red and blue curve which intersects the lighter green curve in the top right exactly once indicating that this is a stabilized trisection. In addition all of the other curves are disjoint from this configuration. Removing a regular neighbourhood of the red and green curves leaves a solid torus with a new boundary component which may be filled in with a disk to produce a trisection of the destabilized manifold.

In Figure \ref{fig:remainingCurves} we see the remaining curves of this trisection. The orientations on the left surface show that the curves may be oriented to pairwise intersect positively once. The curves separate the region into the three shaded regions shown on the right of Figure \ref{fig:remainingCurves}, each containing one boundary component. This shows that the configuration is homeomorphic to the configuration we started with, so that this process did indeed give an unbalanced stabilization of our original trisection
\end{proof}

\begin{figure}
    \centering
    \includegraphics[scale=.5]{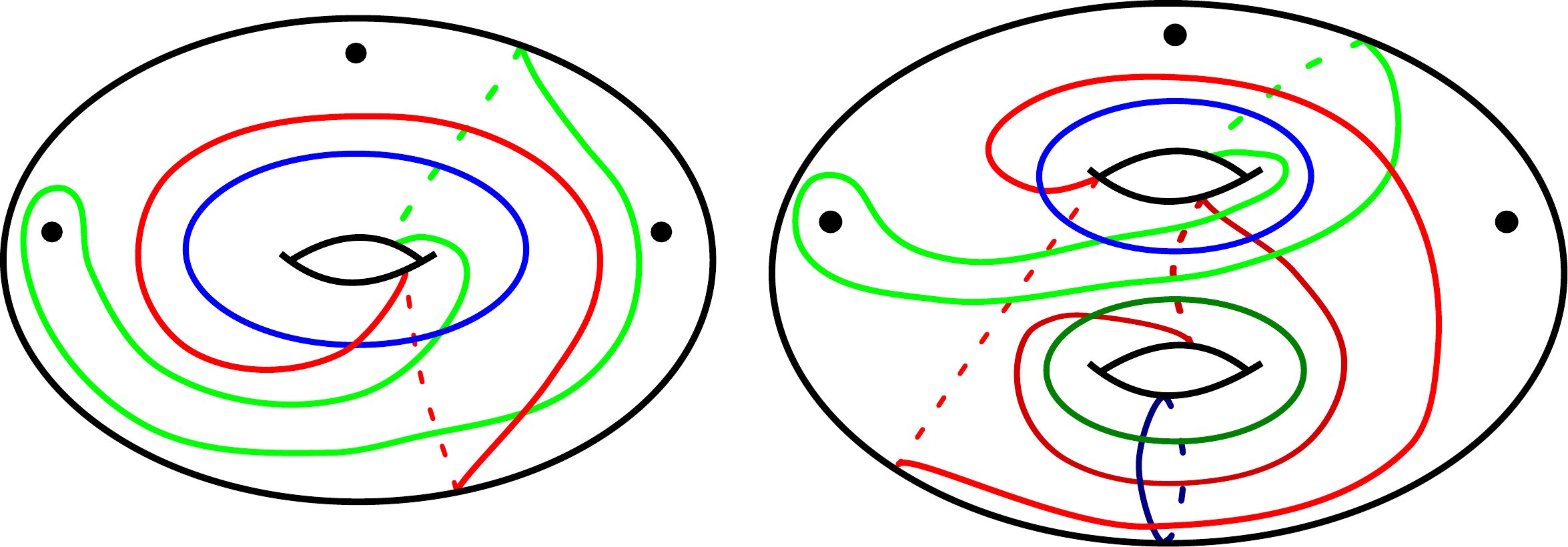}
    \caption{Left: The three vanishing cycles on an inner tricusp. Right: The diagram obtained by performing a UPW move between the red and the blue curves. The three new curves are colored a slightly darker shade of the color of the handlebody they belong to for visual clarity.}
    \label{fig:UPWonInnerTricusp}
\end{figure}

\begin{figure}
    \centering
    \includegraphics[scale=.43]{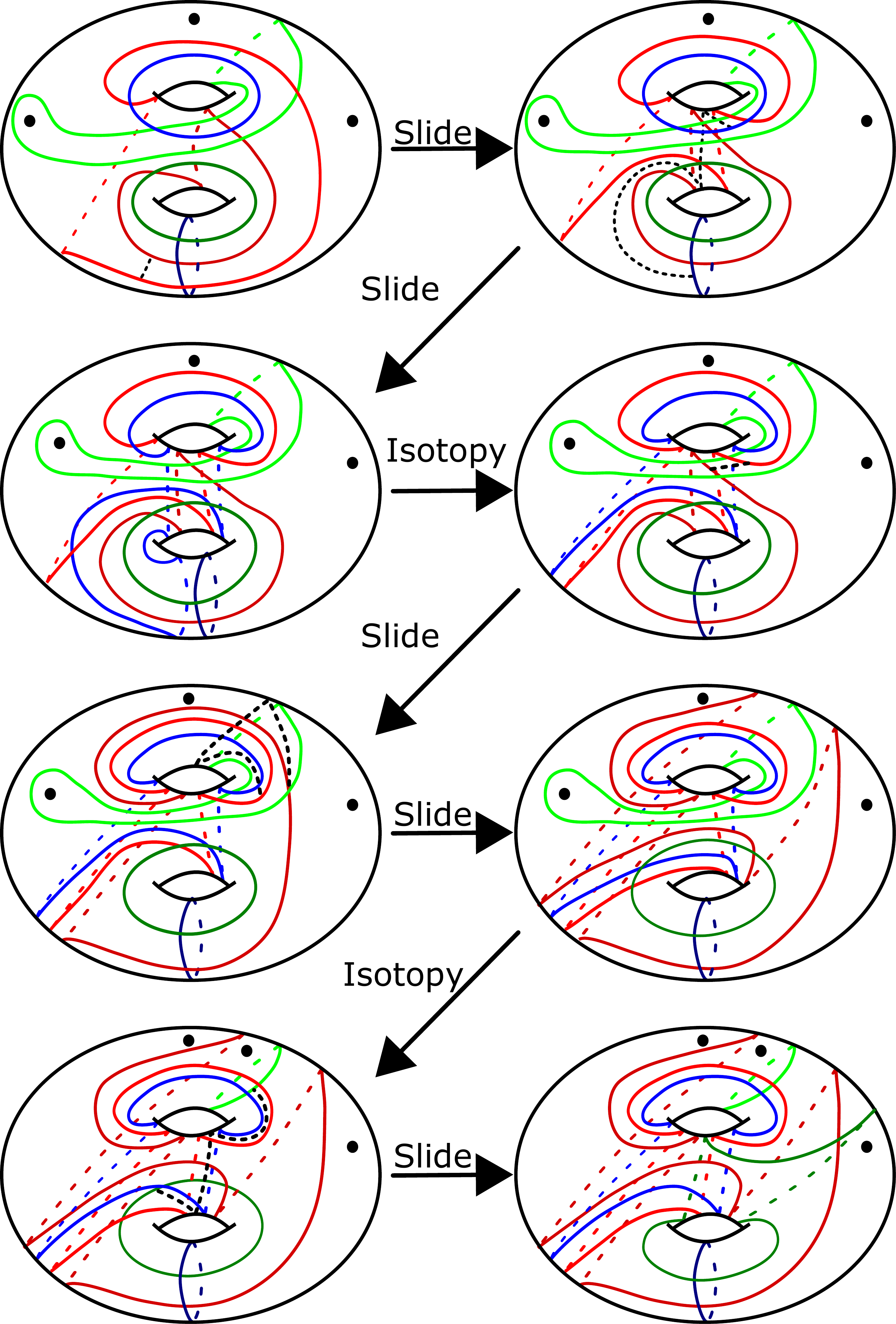}
    \caption{A sequence of slides to show that the multisection in Figure \ref{fig:UPWonInnerTricusp} is stabilized. Handle slides are done along the dotted black arcs.}
    \label{fig:sildesToStabilization}
\end{figure}

\begin{figure}
    \centering
    \includegraphics[scale=.43]{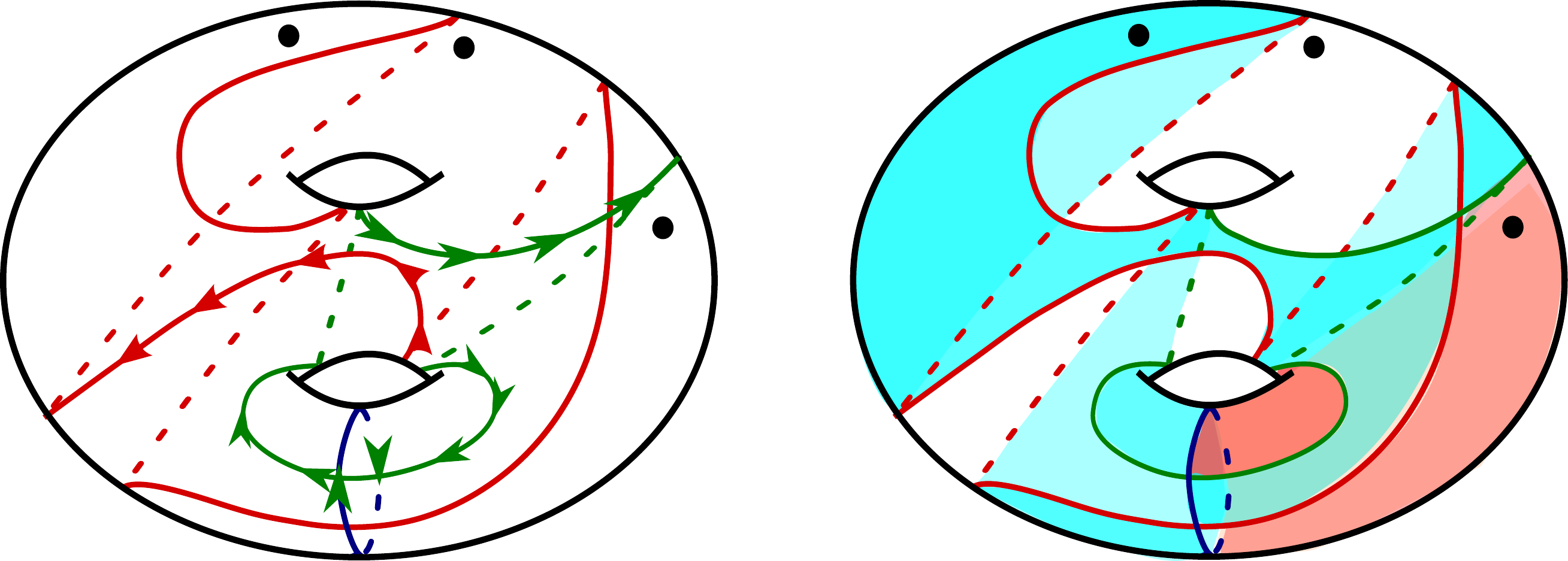}
    \caption{The three curves remaining after destabilizing. With the orientations on the left, each pair of curves intersect once positively. The curves separate the surface into three components, each of which contains one boundary component. These two properties characterize the curves up to orientation preserving homeomorphism fixing the marked points.}
    \label{fig:remainingCurves}
\end{figure}

We are now ready to prove our stable equivalence theorem for multisections.

\begin{theorem}

Any two multisections of a given 4-manifold, $X^4$, become diffeomorphic after a sequence of UPW moves and contractions. Any two multisection diagrams of $X^4$ are related by handle slides of cut systems, isotopies of cut systems, UPW moves, contractions and an overall diffeomorphisms of the multisection surfaces.
\end{theorem}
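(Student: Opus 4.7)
The plan is to first reduce any multisection to a trisection by a sequence of UPW moves, and then appeal to the Gay--Kirby stable equivalence theorem for trisections combined with Proposition \ref{prop:UPWisStab}, which recasts trisection stabilization as three UPW moves. Concretely, given two multisections $M$ and $M'$ of $X^4$, I would apply Proposition \ref{prop:decreaseSectors} iteratively to each: each application is a single UPW move (together with a contraction, used to discard any product region with no cusps that may appear), and removes one sector. After enough iterations, both $M$ and $M'$ have been brought, via UPW moves and contractions, to trisections $T$ and $T'$ of $X^4$.

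Now, by the Gay--Kirby stable equivalence theorem \cite{GK}, $T$ and $T'$ become diffeomorphic after some number of trisection stabilizations on each side. By Proposition \ref{prop:UPWisStab}, each such stabilization is realized by three UPW moves performed on the cusps of an inner tricusp; equivalently, the three sub-operations of an unbalanced stabilization in the sense of \cite{MSZ} are each realized by a single UPW move. Concatenating the reduction sequence from $M$ to $T$, the stabilizing UPW moves bringing $T$ and $T'$ to a common diffeomorphic trisection, and the reverse of the reduction sequence for $M'$, yields a sequence of UPW moves and contractions connecting $M$ and $M'$ up to multisection diffeomorphism. This proves the first statement.

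The diagrammatic statement follows by tracking the corresponding operations on multisection diagrams. Proposition \ref{prop:CurveEffect} gives the explicit modification of a multisection diagram produced by a UPW move, contractions amount to deleting the cut system for a removed sector, and any multisection diffeomorphism restricts to a diffeomorphism of the multisection surface $\Sigma_g$ that carries the ordered sequence of cut systems of one diagram to a sequence of cut systems defining the same ordered sequence of handlebodies on the target; by Waldhausen's theorem such cut systems differ only by handle slides and isotopies.

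The main obstacle I anticipate is ensuring that the hypotheses of Proposition \ref{prop:UPWisStab} persist throughout the argument: one must check that at each intermediate stabilization step the trisection under consideration is represented by a radially monotonic Morse 2-function whose innermost fold is an embedded circle with exactly three cusps. This is automatic for the trisections $T$ and $T'$ produced at the end of the reduction step, and stabilization preserves the inner tricusp configuration, so the iterative application of Proposition \ref{prop:UPWisStab} is legitimate and the two proof steps combine cleanly.
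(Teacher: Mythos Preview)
Your argument is essentially the paper's own proof: reduce to trisections via Proposition~\ref{prop:decreaseSectors}, then realize Gay--Kirby stabilizations via Proposition~\ref{prop:UPWisStab}. The one point that is not quite airtight is your last paragraph: you assert that the inner tricusp hypothesis of Proposition~\ref{prop:UPWisStab} is ``automatic for the trisections $T$ and $T'$ produced at the end of the reduction step,'' but if $M$ (or $M'$) was \emph{already} a trisection then there is no reduction step, and its associated Morse 2-function need not have an embedded innermost tricuspidal fold. The paper handles this by performing one extra UPW move before invoking Proposition~\ref{prop:UPWisStab}, since any UPW move ends with a wrinkle and hence leaves an embedded inner tricusp; you should insert the same device to cover that edge case.
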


\begin{proof}
Let $T$ and $T'$ be two multisections of $X^4.$ By Proposition \ref{prop:decreaseSectors} we can take sequences of UPW moves and contractions to reduce $T$ and $T'$ to trisections. Furthermore, by performing an additional UPW move we may assume that these trisections have Morse 2-functions which have an embedded inner fold with three cusps. By Proposition \ref{prop:UPWisStab} we can perform UPW moves to realize unbalanced stabilizations. This move also leaves another embedded inner fold with three cusps, so that we may repeat this operation in each sector to produce a balanced stabilization. By \cite{GK}, any two trisections are related by a sequence of balanced stabilizations. As we have realized this operation by UPW moves, we have proven the first statement of the theorem. After the first statement is known, the second statement follows exactly as in the case of trisections \cite{GK}.
\end{proof}

In order to provide a correspondence between (not necessarily thin) multisections and the previously defined objects we require one additional relation. We first define this relation on loops in the cut complex. Recall that a type 1 move in $C(\Sigma)$ is supported in the punctured torus filled by the two curves intersecting once.

\begin{definition}
Let $C$ be a fixed cut system on $\Sigma_g$. Two type 1 moves in $C(\Sigma)$ originating from $C$ are said to be \textbf{commuting type-1 moves} if the moves are supported in disjoint subsurfaces.
\end{definition}

If two type 1 moves represented by edges $e_1:C_1 \to C_2$ and $e_1': C_1 \to C_2'$ commute, then the resulting cut systems $C_2$ and $C_2'$ are both connected to a third cut system $C_3$ obtained by performing both curve replacements. We call these edges $e_2$ and $e_2'$ respectively, and this configuration is shown in Figure \ref{fig:commutingType1}. 

We say loops $L$ and $L'$ in $C(\Sigma)$ are related by a \textbf{commutation move} if $L'$ is obtain from $L$ by replacing edges of the form $e_1$ and $e_2$ by edges $e_1'$ and $e_2'$. This corresponds to replacing a portion of the loop which, up to homeomorphism, looks like the top two edges of Figure \ref{fig:commutingType1} with the bottom two edges in Figure \ref{fig:commutingType1}. We also say that two loops $L$ and $L'$ in $D(\Sigma)$ are related by a commutation move if they are the images of loops in $C(\Sigma)$ which are related by a commutation move.

\begin{figure}
    \centering
    \includegraphics[scale=.2]{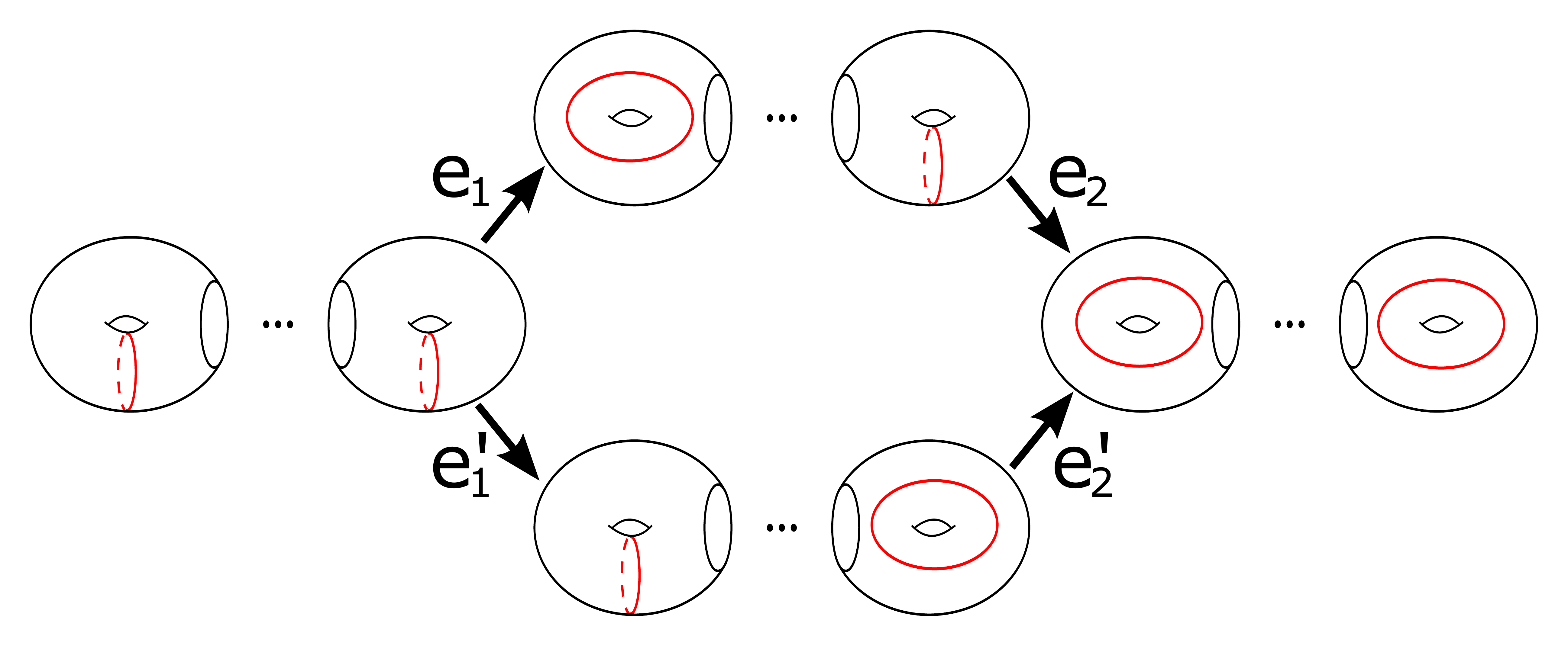}
    \caption{A commutation move is given by replacing the top two edges in a path with the bottom two edges.}
    \label{fig:commutingType1}
\end{figure}

This final relation allows us to take a multisection and obtain a loop in the dual handlebody complex modulo commutation relations.

\begin{lemma}
There is a bijection between genus $g$ multisections of 4-manifolds, and loops in $D(\Sigma_g)$ modulo commutation moves.
\end{lemma}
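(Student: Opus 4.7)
The plan is to extend the bijection of Proposition~\ref{prop:loopsAndThinMultisections} between loops in $D(\Sigma_g)$ and thin multisections, by allowing general (not necessarily thin) multisections on the right while introducing commutation moves on the left. The key idea is that a non-thin multisection can be expanded to a thin one in several ways, and these different expansions give loops related by commutation moves; conversely, contracting a thin multisection by merging sectors whose cusps have disjoint vanishing cycles recovers the general multisection.

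For the forward map from multisections to loops, given a genus $g$ multisection $M$ with sectors $X_1,\ldots,X_n$, I would realize $M$ by a multisected Morse 2-function with $c_i$ cusps in each sector $X_i$. I would then expand $M$ to a thin multisection $M'$ by inserting $c_i - 1$ extra radial lines within each sector to isolate each cusp; this requires choosing an ordering of the cusps in each non-thin sector. Applying Proposition~\ref{prop:loopsAndThinMultisections} to $M'$ gives a loop $L(M') \in D(\Sigma_g)$, and I would show that any two such orderings differ by successive transpositions of adjacent cusps whose vanishing cycles lie in disjoint punctured tori on $\Sigma_g$, so that the resulting loops differ by commutation moves. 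Hence the class $[L(M')]$ in loops modulo commutation is well defined.

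For the inverse map, given a loop $L$ in $D(\Sigma_g)$, the construction $T$ of Proposition~\ref{prop:loopsAndThinMultisections} produces a thin multisection, which is in particular a genus $g$ multisection. I would show that two loops $L$ and $L'$ related by a commutation move produce thin multisections which agree after contracting the radial line separating the two affected sectors, merging their cusps into one common sector, and hence yield the same general multisection. Checking that the forward and inverse maps compose to the identity in both directions then reduces to verifying that expansion and contraction are inverse operations on the affected sectors.

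The main obstacle I anticipate is justifying the claim that cusps lying in the same sector always have vanishing cycles in disjoint punctured tori on $\Sigma_g$, which is exactly the condition making the corresponding type-1 moves commuting. I expect this to follow by standardizing the Heegaard splitting $\partial X_i = H_{i-1,i} \cup H_{i,i+1}$ of $\#^{k_i} S^1 \times S^2$ via Waldhausen's theorem, so that the $g - k_i$ pairs of differing curves of the cut systems occupy $g - k_i$ pairwise disjoint punctured tori as in Figure~\ref{fig:DualHandlebodies}, realizing each cusp within the sector as supported in its own subsurface.
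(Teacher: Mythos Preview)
Your proposal is correct and follows essentially the same approach as the paper: expand a general multisection to a thin one, invoke Waldhausen's theorem to put the Heegaard diagram of each $\partial X_i$ into standard form so that the $g-k_i$ pairs of intersecting curves occupy disjoint punctured tori, observe that different thinnings correspond to different orderings of these flips and hence to commutation moves, and then appeal to Proposition~\ref{prop:loopsAndThinMultisections}. The only cosmetic difference is that you phrase the expansion step in terms of separating cusps of a multisected Morse 2-function by radial lines, whereas the paper works directly with curve pairs on the multisection diagram; these are equivalent descriptions and the anticipated obstacle you flag is resolved exactly as you expect.
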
 

\begin{proof}
Recall that any multisection can be made thin after a sequence of expansions. If a multisection is not thin, then it has some sector $X_i$ with $X_i \cong \sharp^k S^1 \times B^3$ with $k < g-1$. The Heegaard splitting on $\partial X_i$ given by $H_{i-1,i} \cup H_{i,i+1}$ is a Heegaard splitting of $\#^{k}S^1 \times S^2$. By Waldhausen's theorem \cite{FW}, there is a Heegaard diagram for this manifold consisting of $k$ parallel curves and $g-k$ pairs of curves which intersect once. Different choices of thinning the multisection correspond, in a multisection diagram, to choices of orders in which to flip these curves which intersect once. The resulting thin multisections depend on these choices but when passing to a loop in the dual handlebody complex, these choices correspond to commutation moves. The result then follows from Proposition \ref{prop:loopsAndThinMultisections}.
\end{proof}

By passing through the bijections in Theorem \ref{thm:firstBijections}, we also obtain a natural commutation relation on generic paths of Morse functions up to equivalence. Here commuting paths correspond to moving two critical value swaps in two disjoint copies of $S_{1,2}$ past each other in time. We next define the sets involved in our main theorem. Note that A UPW move takes an object associated to a genus $g$ surface and assigns to it an object on a genus $g+1$ surface.

\begin{definition}
We denote by $\mathbf{\tilde{L}(D)}$ the set of loops in all dual handlebody complexes modulo UPW moves and commutation moves. We denote by $\mathbf{\tilde{L}(\mathfrak{M})}$ the set $\mathfrak{M_h}$ modulo UPW moves and commutation moves. We denote by $\mathbf{\tilde{T}}$ the set of thin multisections of smooth 4-manifolds modulo expansion, contraction, and UPW moves. We denote by $\mathfrak{4man}$ the set of smooth, orientable, closed 4-manifolds up to diffeomorphisms.
\end{definition}

The following theorem summarizes the results of this paper.

\begin{theorem}
\label{thm:secondBijections}
There are bijections $\tilde{L}(D) \leftrightarrow \tilde{L}(\mathfrak{M})  \leftrightarrow \tilde{T}(\Sigma_g) \leftrightarrow \mathfrak{4man}$.
\end{theorem}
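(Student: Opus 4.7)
The plan is to build on Theorem \ref{thm:firstBijections}, which already gives genus-by-genus bijections between $\mathfrak{M}_h(\Sigma_g)$, loops in $D(\Sigma_g)$, and $T(\Sigma_g)$. To extend to the quotients in Theorem \ref{thm:secondBijections}, I would first take the disjoint union of the sets in Theorem \ref{thm:firstBijections} over all $g \geq 2$, obtaining bijections between the three corresponding ``all-genera'' sets. Then the main task is to check that the equivalence relations on each side (UPW moves and commutation moves) are carried across these bijections to the same relations on the other sides. For commutation moves, this is essentially built into the preceding lemma: the ambiguity of thinning a multisection corresponds exactly to commutation moves in the dual handlebody complex, and via the bijection of Proposition \ref{prop:MorseToComplexInjection} to commuting critical value exchanges in disjoint copies of $S_{1,2}$ in a Morse loop. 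For UPW moves, the translation is the content of Proposition \ref{prop:CurveEffect} on the cut/dual handlebody side, and on the Morse 2-function side it is the definition of the UPW move itself; the translation to a modification of generic loops of Morse functions is obtained by passing from the multisected Morse 2-function through the radial-slice construction immediately before Definition \ref{def:MultisectedMorse2}.

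Given these compatibilities, the first three bijections $\tilde{L}(D) \leftrightarrow \tilde{L}(\mathfrak{M}) \leftrightarrow \tilde{T}(\Sigma_g)$ descend directly from the bijections of Theorem \ref{thm:firstBijections}. For the fourth bijection, $\tilde{T} \leftrightarrow \mathfrak{4man}$, I would use the map sending a thin multisection to its underlying 4-manifold. This map is well defined on $\tilde{T}$: expansions and contractions are by construction operations on the Morse 2-function that do not change the 4-manifold, UPW moves are realized by always-realizable modifications of wrinkled fibrations and hence preserve the 4-manifold, and multisection diffeomorphisms preserve the 4-manifold by definition. Surjectivity follows from the existence of trisections \cite{GK}, since every trisection is in particular a thin multisection. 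Injectivity is precisely the stable equivalence theorem proved just above Theorem \ref{thm:secondBijections}: any two multisections of the same 4-manifold become diffeomorphic after a sequence of UPW moves and contractions, which is exactly the equivalence relation defining $\tilde{T}$.

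The main obstacle, and the one to verify most carefully, is that the UPW move as defined on each of the four types of objects really does correspond under the bijections of Theorem \ref{thm:firstBijections}. The dictionary is natural but not tautological: a UPW move on a Morse 2-function is defined via the singular set, whereas on a loop in $D$ or on a Morse loop $f_t$ one must translate through the handlebody-per-sector construction and the vanishing-cycle bookkeeping of Proposition \ref{prop:CurveEffect}. Once one has verified that a UPW move on a multisection diagram produces exactly the loop in $D(\Sigma_{g+1})$ corresponding, under Theorem \ref{thm:firstBijections}, to the Morse loop obtained by inserting the corresponding wrinkle into the Cerf graphic (and similarly on the Morse-function side), the rest of the proof is formal: the four sets admit compatible quotient maps, each of the established bijections descends, and the resulting diagram of bijections is commutative by naturality of the constructions.
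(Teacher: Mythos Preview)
Your approach matches the paper's: Theorem \ref{thm:secondBijections} is stated there as a summary of the preceding results, with no separate proof, and the ingredients you list (Theorem \ref{thm:firstBijections} for the genus-by-genus bijections, Proposition \ref{prop:CurveEffect} for translating UPW moves to diagrams/loops, the lemma on commutation moves, and the stable equivalence theorem for injectivity of $\tilde{T}\to\mathfrak{4man}$) are exactly the pieces the paper assembles.

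One correction: your surjectivity argument for $\tilde{T}\to\mathfrak{4man}$ asserts that ``every trisection is in particular a thin multisection,'' which is false in general---a $(g;k)$-trisection is thin only when $k=g-1$. The fix is immediate and is the one the paper uses implicitly: every closed oriented 4-manifold admits a trisection by \cite{GK}, and any multisection can be made thin by expansions (as noted just before the commutation-move lemma), so surjectivity still holds. With that adjustment your outline is correct.
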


\bibliography{thebib.bib}

\begin{thebibliography}{10}

\bibitem{Asa21}
Nobutaka {Asano}.
\newblock {Right-left equivalent maps of simplified $(2, 0)$-trisections with
  different configurations of vanishing cycles}.
\newblock {\em arXiv e-prints}, page arXiv:2109.13533, September 2021.

\bibitem{BS}
R.~I. {Baykur} and O.~{Saeki}.
\newblock {Simplifying indefinite fibrations on 4-manifolds}.
\newblock {\em ArXiv e-prints}, May 2017.

\bibitem{Bay09}
Refik~\.{I}nan\c{c} Baykur.
\newblock Topology of broken {L}efschetz fibrations and near-symplectic
  four-manifolds.
\newblock {\em Pacific J. Math.}, 240(2):201--230, 2009.

\bibitem{BehHay16}
Stefan Behrens and Kenta Hayano.
\newblock Elimination of cusps in dimension 4 and its applications.
\newblock {\em Proc. Lond. Math. Soc. (3)}, 113(5):674--724, 2016.

\bibitem{CasOzg}
Nickolas~A. Castro and Burak Ozbagci.
\newblock Trisections of 4-manifolds via {L}efschetz fibrations.
\newblock {\em Math. Res. Lett.}, 26(2):383--420, 2019.

\bibitem{JC}
Jean Cerf.
\newblock La stratification naturelle des espaces de fonctions
  diff\'{e}rentiables r\'{e}elles et le th\'{e}or\`eme de la pseudo-isotopie.
\newblock {\em Inst. Hautes \'{E}tudes Sci. Publ. Math.}, (39):5--173, 1970.

\bibitem{GK}
David Gay and Robion Kirby.
\newblock Trisecting 4--manifolds.
\newblock {\em Geom. Topol.}, 20(6):3097--3132, 2016.

\bibitem{Gay19}
David~T. Gay.
\newblock Functions on surfaces and constructions of manifolds in dimensions
  three, four and five.
\newblock In {\em Breadth in contemporary topology}, volume 102 of {\em Proc.
  Sympos. Pure Math.}, pages 79--94. Amer. Math. Soc., Providence, RI, 2019.

\bibitem{WH}
Wolfgang Haken.
\newblock Some results on surfaces in {$3$}-manifolds.
\newblock In {\em Studies in {M}odern {T}opology}, pages 39--98. Math. Assoc.
  Amer. (distributed by Prentice-Hall, Englewood Cliffs, N.J.), 1968.

\bibitem{HT}
A.~Hatcher and W.~Thurston.
\newblock A presentation for the mapping class group of a closed orientable
  surface.
\newblock {\em Topology}, 19(3):221--237, 1980.

\bibitem{HatLocSch}
Allen Hatcher, Pierre Lochak, and Leila Schneps.
\newblock On the {T}eichm\"{u}ller tower of mapping class groups.
\newblock {\em J. Reine Angew. Math.}, 521:1--24, 2000.

\bibitem{Hay20}
Kenta Hayano.
\newblock On diagrams of simplified trisections and mapping class groups.
\newblock {\em Osaka J. Math.}, 57(1):17--37, 2020.

\bibitem{IN20}
Gabriel Islambouli and Patrick Naylor.
\newblock Multisections of 4-manifolds, 2020.

\bibitem{JJ}
Jesse Johnson.
\newblock Heegaard splittings and the pants complex.
\newblock {\em Algebr. Geom. Topol.}, 6:853--874, 2006.

\bibitem{Kir78}
Robion Kirby.
\newblock A calculus for framed links in {$S\sp{3}$}.
\newblock {\em Invent. Math.}, 45(1):35--56, 1978.

\bibitem{KT}
Robion Kirby and Abigail Thompson.
\newblock A new invariant of 4-manifolds.
\newblock {\em Proc. Natl. Acad. Sci. USA}, 115(43):10857--10860, 2018.

\bibitem{IK}
Irwin Kra.
\newblock On lifting {K}leinian groups to {${\rm SL}(2,{\bf C})$}.
\newblock In {\em Differential geometry and complex analysis}, pages 181--193.
  Springer, Berlin, 1985.

\bibitem{Kud99}
E.~A. Kudryavtseva.
\newblock Realization of smooth functions on surfaces as height functions.
\newblock {\em Mat. Sb.}, 190(3):29--88, 1999.

\bibitem{LP}
Fran\c{c}ois Laudenbach and Valentin Po\'enaru.
\newblock A note on {$4$}-dimensional handlebodies.
\newblock {\em Bull. Soc. Math. France}, 100:337--344, 1972.

\bibitem{Lek09}
Yanki Lekili.
\newblock Wrinkled fibrations on near-symplectic manifolds.
\newblock {\em Geom. Topol.}, 13(1):277--318, 2009.
\newblock Appendix B by R. \.{I}nan\c{c} Baykur.

\bibitem{WL}
W.~B.~R. Lickorish.
\newblock A representation of orientable combinatorial {$3$}-manifolds.
\newblock {\em Ann. of Math. (2)}, 76:531--540, 1962.

\bibitem{FeLu}
Feng Luo.
\newblock On {H}eegaard diagrams.
\newblock {\em Math. Res. Lett.}, 4(2-3):365--373, 1997.

\bibitem{MSZ}
Jeffrey Meier, Trent Schirmer, and Alexander Zupan.
\newblock Classification of trisections and the generalized property {R}
  conjecture.
\newblock {\em Proc. Amer. Math. Soc.}, 144(11):4983--4997, 2016.

\bibitem{EM}
Edwin~E. Moise.
\newblock Affine structures in {$3$}-manifolds. {V}. {T}he triangulation
  theorem and {H}auptvermutung.
\newblock {\em Ann. of Math. (2)}, 56:96--114, 1952.

\bibitem{KR}
Kurt Reidemeister.
\newblock Zur dreidimensionalen {T}opologie.
\newblock {\em Abh. Math. Sem. Univ. Hamburg}, 9(1):189--194, 1933.

\bibitem{JS}
James Singer.
\newblock Three-dimensional manifolds and their {H}eegaard diagrams.
\newblock {\em Trans. Amer. Math. Soc.}, 35(1):88--111, 1933.

\bibitem{Waj98}
Bronis\l~aw Wajnryb.
\newblock Mapping class group of a handlebody.
\newblock {\em Fund. Math.}, 158(3):195--228, 1998.

\bibitem{FW}
Friedhelm Waldhausen.
\newblock Heegaard-{Z}erlegungen der {$3$}-{S}ph\"are.
\newblock {\em Topology}, 7:195--203, 1968.

\bibitem{AW}
Andrew~H. Wallace.
\newblock Modifications and cobounding manifolds. {IV}.
\newblock {\em J. Math. Mech.}, 12:445--484, 1963.

\end{thebibliography}
\bibliographystyle{plain}

\end{document}